\newtheorem{theorem}{Theorem}[section]
\newtheorem{lemma}[theorem]{Lemma}
\newtheorem{proposition}[theorem]{Proposition}
\newtheorem{corollary}[theorem]{Corollary}
\newtheorem{claim}[theorem]{Claim}
\newtheorem{question}[theorem]{Question}
\theoremstyle{definition}
\newtheorem{definition}[theorem]{Definition}
\newtheorem{remark}[theorem]{Remark}
\newcommand{\Aut}{\mathrm{Aut}}
\newcommand{\eps}{\varepsilon}
\newcommand{\E}{{\mathbb{E}}}
\newcommand{\Var}{{\bf Var}}
\newcommand{\N}{\mathcal N}
\newcommand{\C}{\mathcal{C}}
\newcommand{\F}{\mathcal F}
\newcommand{\cA}{\mathcal{A}}
\newcommand{\ex}{\mathrm{ex}}
\newcommand{\GG}{\mathcal{G}}
\newcommand{\Free}{\mathrm{Free}}
\newcommand{\HH}{\mathcal{H}}
\newcommand{\cI}{\mathcal{I}}
\newcommand{\cP}{\mathcal{P}}
\newcommand{\cS}{\mathcal{S}}
\newcommand{\T}{\mathcal{T}}
\newcommand{\exx}{\mathrm{ex}^*}
\renewcommand{\le}{\leqslant}
\renewcommand{\ge}{\geqslant}
\begin{document}

\title{A generalized Tur\'an problem in random graphs}

\author{Wojciech Samotij}
\address{School of Mathematical Sciences, Tel Aviv University, Tel Aviv 6997801, Israel}
\email{samotij@tauex.tau.ac.il}

\author{Clara Shikhelman}
\address{School of Mathematical Sciences, Tel Aviv University, Tel Aviv 6997801, Israel}
\email{clara.shikhelman@gmail.com}

% Grant information
\thanks{Research supported in part by the Israel Science Foundation grant 1147/14 (WS) and grants from the Israel Science Foundation and the German-Israeli Foundation for Scientific Research and Development (CS)}

\maketitle

\begin{abstract}
  We study the following generalization of the Tur\'an problem in sparse random graphs. Given graphs $T$ and $H$, let $\ex\big(G(n,p), T, H\big)$ be the random variable that counts the largest number of copies of $T$ in a subgraph of $G(n,p)$ that does not contain $H$. We study the threshold phenomena arising in the evolution of the typical value of this random variable, for every $H$ and an arbitrary $2$-balanced $T$.

  Our results in the case when $m_2(H) > m_2(T)$ are a natural generalization of the Erd\H{o}s--Stone theorem for $G(n,p)$, which was proved several years ago by Conlon and Gowers and by Schacht; the case $T = K_m$ has been recently resolved by Alon, Kostochka, and Shikhelman. More interestingly, the case when $m_2(H) \le m_2(T)$ exhibits a more complex and subtle behavior. Namely, the location(s) of the (possibly multiple) threshold(s) are determined by densities of various coverings of $H$ with copies of $T$ and the typical value(s) of $\ex\big(G(n,p), T, H\big)$ are given by solutions to deterministic hypergraph Tur\'an-type problems that we are unable to solve in full generality.
\end{abstract}

\section{Introduction}

The well-known Tur\'an function is defined as follows. For a fixed graph $H$ and an integer $n$, we let $\ex(n,H)$ be the maximum number of edges in an $H$-free\footnote{A graph is $H$-free if it does not contain $H$ as a (not necessarily induced) subgraph.} subgraph of $K_n$. This function has been studied extensively and generalizations of it were offered in different settings (see~\cite{Si} for a survey). Erd\H{o}s and Stone~\cite{ESt} determined $\ex(n,H)$ for any nonbipartite graph $H$ up to lower order terms.

\begin{theorem}[\cite{ESt}]
  \label{thm:ErdSto}
  For every fixed nonempty graph $H$,
  \[
    \ex(n,H) = \left( 1-\frac{1}{\chi(H)-1}+o(1) \right)\binom{n}{2}.
  \]
\end{theorem}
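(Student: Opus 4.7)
The plan is to establish matching lower and upper bounds. For the lower bound, set $r=\chi(H)-1$ and consider the Tur\'an graph $T_r(n)$, the complete balanced $r$-partite graph on $n$ vertices. Since $T_r(n)$ has chromatic number $r<\chi(H)$, no copy of $H$ can appear in it, and a direct count of its edges gives $\ex(n,H)\ge \left(1-1/(\chi(H)-1)\right)\binom{n}{2}(1-o(1))$.

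For the matching upper bound I would reduce to the following standard supersaturation statement: for all integers $r,t\ge 1$ and every $\eps>0$, if $n$ is sufficiently large then any graph on $n$ vertices with at least $(1-1/r+\eps)\binom{n}{2}$ edges contains a copy of the complete $(r+1)$-partite graph $K_{r+1}(t)$ with $t$ vertices in each class. Applied with $r=\chi(H)-1$ and $t=|V(H)|$, this gives the theorem, since any graph $H$ of chromatic number $\chi(H)$ embeds into $K_{\chi(H)}(|V(H)|)$.

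To establish this statement, I would invoke Szemer\'edi's regularity lemma. Fix $\eps'\ll\eps$ and apply the lemma to $G$ to obtain an equipartition $V_1,\dots,V_k$ in which all but at most $\eps'k^2$ pairs are $\eps'$-regular. Define a reduced graph $R$ on $[k]$ whose edges correspond to $\eps'$-regular pairs of density at least $\eps/2$. After discarding the edges of $G$ lying inside parts, across irregular pairs, or across pairs of density below $\eps/2$, a routine accounting shows that fewer than $(3\eps/4)\binom{n}{2}$ edges are lost, so the edge density of $R$ exceeds $1-1/r+\eps/4$. Tur\'an's theorem then produces a clique on $r+1$ vertices in $R$, and the $\binom{r+1}{2}$ corresponding dense regular pairs of $G$ contain a copy of $K_{r+1}(t)$ by the standard embedding lemma for regular pairs.

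The main obstacle is this last embedding lemma: one builds $K_{r+1}(t)$ by inserting vertices one at a time into their designated classes $V_i$, at each step intersecting $V_i$ with the common neighborhoods in the other classes of the already-embedded vertices, and using $\eps'$-regularity to argue that each such common neighborhood retains a fixed positive fraction of $V_i$ as long as the intersections are not too small. Balancing the parameters $\eps'$, $\eps/2$, $t$, and the upper bound on $k$ supplied by the regularity lemma so that the greedy process never stalls is standard but requires care.
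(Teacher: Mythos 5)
The paper does not prove this theorem; it states it as a classical result and cites it directly to Erd\H{o}s and Stone~\cite{ESt}, so there is no internal proof to compare against. Your argument is correct as a proof of the theorem, but it is worth noting that it is the modern regularity-lemma route (reduce to the $K_{r+1}(t)$ supersaturation statement, apply Szemer\'edi's regularity lemma, pass to the reduced graph, invoke Tur\'an's theorem, and finish with the embedding/counting lemma for regular pairs), which postdates the theorem by three decades. The original Erd\H{o}s--Stone proof is entirely elementary: one shows directly that a graph with $(1-1/r+\eps)\binom{n}{2}$ edges contains $K_{r+1}(t)$ by an iterated minimum-degree and common-neighborhood argument, peeling off dense subgraphs and growing a complete $(r+1)$-partite structure one class at a time, with no appeal to regularity. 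The regularity approach is cleaner conceptually and generalizes more readily (for instance to hypergraphs or to proving the Erd\H{o}s--Simonovits stability theorem in one stroke), at the cost of astronomically worse quantitative dependence of $n$ on $\eps$; the elementary proof gives much better bounds and, when optimized, yields the sharp $t = \Theta(\log n / \log(1/\eps))$ dependence later determined by Bollob\'as, Erd\H{o}s, Simonovits, and Chv\'atal--Szemer\'edi. Either route is a complete proof of the statement as cited.
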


\noindent
Note that if $H$ is bipartite, then the theorem only tells us that $\ex(n,H)=o(n^2)$. In fact, the classical result of K\H{o}v\'ari, S\'os, and Tur\'an~\cite{KovSosTur54} implies that in this case $\ex(n,H) = O(n^{2-c})$ for some $c > 0$ that depends only on $H$.

Two natural generalizations of Theorem~\ref{thm:ErdSto} have been considered in the literature. First, instead of maximizing the number of edges in an $H$-free subgraph of the complete graph with $n$ vertices, one can consider only $H$-free subgraphs of some other $n$-vertex graph $G$. One natural choice is to let $G$ be the random graph $G(n,p)$, that is, the random graph on $n$ vertices whose each pair of vertices forms an edge independently with probability $p$. This leads to the study of the random variable $\ex\big(G(n,p),H\big)$, the maximum number of edges in an $H$-free subgraph of $G(n,p)$. Considering the intersection between the largest $H$-free subgraph of $K_n$ and the random graph $G(n,p)$, one can show that if $p \gg \ex(n, H)^{-1}$, then w.h.p.\footnote{We write w.h.p.\ as an abbreviation of with high probability, that is, with probability tending to one as the number of vertices $n$ tends to infinity.}
\begin{equation}
  \label{eq:Erdos-Stone-lower-Gnp}
  \ex\big( G(n,p), H\big) \ge (1+o(1)) \cdot \ex(n,H)p.
\end{equation}

The above bound is not always best-possible. If $p$ decays sufficiently fast so that the expected number of copies of (some subgraph of) $H$ that contain a given edge of $G(n,p)$ is $o(1)$, then~\eqref{eq:Erdos-Stone-lower-Gnp} can be strengthened to $\ex\big(G(n,p), H\big) \ge (1+o(1)) \cdot \binom{n}{2}p$. Indeed, one can remove all copies of $H$ from $G(n,p)$ by arbitrarily removing an edge from each copy of (some subgraph $H'$ of) $H$ and the assumption on $p$ implies that w.h.p.\ only a tiny proportion of the edges will be removed this way. Such considerations naturally lead to the notion of \emph{2-density} of $H$, denoted by $m_2(H)$, which is defined by
\[
  m_2(H) = \max\left\{\frac{e_H-1}{v_H-2} : H '\subseteq H, \, e_{H'} \ge 2\right\}.
\]
Moreover, we say that $H$ is \emph{$2$-balanced} if $H$ itself is one of the graphs achieving the maximum above, that is, if $m_2(H) = (e_H-1)/(v_H-2)$. It is straightforward to verify that the expected number of copies of (some subgraph $H'$ of) $H$ that contain a given edge of $G(n,p)$ tends to zero precisely when $p \ll n^{-1/m_2(H)}$. 

Haxell, Kohayakawa, \L uczak, and R\"odl~\cite{HKL95, KLR} conjectured that if the opposite inequality $p \gg n^{-1/m_2(H)}$ holds, then the converse of~\eqref{eq:Erdos-Stone-lower-Gnp} must (essentially) be true. (The case when $H$ is bipartite is much more subtle; see, e.g., \cite{KohKreSte98, MorSax16}.) This conjecture was proved by Conlon and Gowers~\cite{CG}, under the additional assumption that $H$ is $2$-balanced, and, independently, by Schacht~\cite{Sc}; see also \cite{BMS, ConGowSamSch14, FriRodSch10, Sam14, ST}.

\begin{theorem}[\cite{CG,Sc}]
  \label{thm:RegRandTur}
  For any fixed graph $H$ with at least two edges, the following holds w.h.p.\
  \[
    \ex\big(G(n,p), H\big) =
    \begin{cases}
      \left(1-\frac{1}{\chi(H)-1}+o(1)\right) \binom{n}{2} p & \text{if } p \gg n^{-1/m_2(H)}, \\
      (1+o(1)) \cdot \binom{n}{2}p & \text{if } n^{-2} \ll p \ll n^{-1/m_2(H)}.
    \end{cases}
  \]
\end{theorem}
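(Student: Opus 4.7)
The plan is to handle the lower and upper bounds separately in each of the two probability regimes.

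\textbf{Lower bounds.} For $p \gg n^{-1/m_2(H)}$, I would apply Theorem~\ref{thm:ErdSto} to fix an $H$-free graph $G_0 \subseteq K_n$ with $\bigl(1-\tfrac{1}{\chi(H)-1}+o(1)\bigr)\binom{n}{2}$ edges, and observe that $G_0 \cap G(n,p)$ is an $H$-free subgraph of $G(n,p)$ whose number of edges is, by a standard Chernoff bound, w.h.p.\ concentrated around its expectation; this is exactly~\eqref{eq:Erdos-Stone-lower-Gnp}. For $n^{-2} \ll p \ll n^{-1/m_2(H)}$, let $H' \subseteq H$ be a subgraph with $m_2(H) = (e_{H'}-1)/(v_{H'}-2)$. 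The expected number of copies of $H'$ in $G(n,p)$ through a fixed edge is of order $n^{v_{H'}-2}p^{e_{H'}-1} = o(1)$, so the expected total number of copies of $H'$ is $o(n^2 p)$. Markov's inequality then shows that w.h.p.\ one can delete one edge from each copy of $H'$ --- destroying every copy of $H$ as well --- while losing only $o(n^2 p)$ edges; combined with $e(G(n,p)) = (1+o(1))\binom{n}{2}p$, this produces a matching lower bound.

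\textbf{Upper bounds.} In the low-density regime the upper bound is immediate from the Chernoff estimate $e(G(n,p)) \le (1+o(1))\binom{n}{2}p$. The bulk of the work is the upper bound for $p \gg n^{-1/m_2(H)}$. My plan is to invoke the hypergraph container method (Balogh--Morris--Samotij~\cite{BMS}, Saxton--Thomason~\cite{ST}) to produce a family $\mathcal{C}$ of subgraphs of $K_n$ such that (i) every $H$-free graph on $[n]$ sits inside some $C \in \mathcal{C}$; (ii) $|\mathcal{C}| \le \exp\bigl(O(n^{2-1/m_2(H)} \log n)\bigr)$; and (iii) every $C \in \mathcal{C}$ satisfies $e(C) \le \bigl(1-\tfrac{1}{\chi(H)-1}+o(1)\bigr)\binom{n}{2}$. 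Properties (i) and (ii) will come from the container theorem applied to the $e_H$-uniform hypergraph on $E(K_n)$ whose hyperedges are the copies of $H$; the $2$-balancedness of $H$ is precisely what supplies the codegree conditions needed to take successive covers of size $n^{2-1/m_2(H)}$. Property (iii) is the standard supersaturation version of Erd\H{o}s--Stone: any graph on $[n]$ with $\bigl(1-\tfrac{1}{\chi(H)-1}+\eps\bigr)\binom{n}{2}$ edges contains $\Omega_\eps(n^{v_H})$ copies of $H$, so the container algorithm can be iterated down to this density. Given such a family, a union bound over $\mathcal{C}$ combined with Chernoff applied to each $|E(C) \cap G(n,p)|$ produces a bound of the form $\exp\bigl(O(n^{2-1/m_2(H)} \log n) - c_\eps n^2 p\bigr) = o(1)$.

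\textbf{Main obstacle.} The naive calculation above loses a spurious $\log n$ factor relative to the sharp threshold $p \gg n^{-1/m_2(H)}$. To close this gap I would either refine the container construction so that the total ``information content'' of the family is $O(n^{2-1/m_2(H)})$ without the logarithm, or appeal to the original transference theorems of Conlon--Gowers~\cite{CG} and Schacht~\cite{Sc}, which directly show that every $H$-free subgraph of $G(n,p)$ is close (after removing $o(n^2 p)$ edges) to the trace on $G(n,p)$ of an $H$-free subgraph of $K_n$, whereupon Theorem~\ref{thm:ErdSto} completes the argument. The hardest point on either route will be matching the number of containers (or the quantitative strength of the transference) precisely with the Chernoff savings in this probability window; the Erd\H{o}s--Stone-type supersaturation needed for property (iii) is routine and will not be the bottleneck.
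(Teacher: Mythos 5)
The paper does not prove this theorem; it is quoted from Conlon--Gowers~\cite{CG} and Schacht~\cite{Sc}, so there is no in-paper proof to compare against. Your outline is nevertheless a legitimate modern route to the result, and the one obstacle you flag --- the spurious $\log n$ in the union bound over containers --- has a standard, clean resolution that this very paper deploys in its own arguments (see the discussion in Section~\ref{sec:proof-outline} and Claims~\ref{lem:boundOfSig-easy}, \ref{lem:boundOfSig}, \ref{lem:boundOfSig-case2}). The fix is not to union-bound over the containers themselves but over the \emph{fingerprints} $S$, pairing the increasing event $\{S\subseteq G\}$ with the decreasing bad event via Harris's inequality (Theorem~\ref{thm:Harris}). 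Concretely: for each fingerprint $S$, let $\cA_S$ be the event that $G$ has fewer than $\bigl(\binom{n}{2}-\ex(n,H)-\delta n^2\bigr)p$ edges outside the container $f(S)\cup S$; by supersaturation this is a large-deviation event, so $\Pr(\cA_S)\le\exp(-\beta n^2p)$. Since $\cA_S$ is decreasing and $\{S\subseteq G\}$ is increasing,
\[
\Pr\bigl(\exists S\in\cS:\ S\subseteq G \text{ and } \cA_S\bigr)\le \sum_{S\in\cS}\Pr(S\subseteq G)\cdot\Pr(\cA_S)\le \exp(-\beta n^2p)\sum_{s\le Cqn^2}\binom{n^2}{s}p^s,
\]
and the last sum is $\exp\bigl(o(n^2p)\bigr)$ whenever $q\ll p$: writing $s=\eps n^2 p$ with $\eps=o(1)$, the summand is $\exp\bigl(\eps n^2 p\log(e/\eps)\bigr)$ and $\eps\log(1/\eps)\to 0$. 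This saves precisely the logarithm and yields the sharp threshold $p\gg n^{-1/m_2(H)}$; there is no need to re-engineer the container construction or to invoke the transference theorems as a black box (which would, for this statement, be circular).

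Two smaller corrections. First, $2$-balancedness of $H$ plays no role in the codegree verification: with $q=n^{-1/m_2(H)}$ one needs $n^{v_{H'}-2}q^{e_{H'}-1}\ge 1$ for all $H'\subseteq H$, and that is exactly the definition of $m_2(H)$ as a maximum over subgraphs --- it holds for arbitrary $H$. (The $2$-balancedness hypothesis in~\cite{CG} is an artifact of their transference machinery, not of the container approach, and Schacht's proof covers general~$H$.) Second, in the low-density lower bound, one needs, besides $\E[\N_{H'}(G)]=o(n^2p)$ (which is Markov) and concentration of $e(G)$ (which is Chernoff), the observation that every copy of $H$ in $G$ contains a copy of $H'$, so that deleting one edge per $H'$-copy destroys every $H$-copy; you state this, and it is correct, but it is worth being explicit that this is why a single densest $H'$ suffices.
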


The second generalization of the Tur\'an problem is to fix two graphs $T$ and $H$ and ask to determine the maximum number of copies of $T$ in an $H$-free subgraph of $K_n$. Denote this function by $\ex(n,T,H)$ and note that $\ex(n, H) = \ex(n, K_2, H)$, so this is indeed a generalization. Erd\H{o}s~\cite{E62} resolved this question in the case when both $T$ and $H$ are complete graphs, proving that the balanced complete $(\chi(H)-1)$-partite graph has the most copies of $T$. Another notable result was recently obtained by Hatami, Hladk\'y, Kr\'a\v l, Norine, and Razborov~\cite{HHKN} and, independently, by Grzesik~\cite{G}, who determined $\ex(n, C_5, K_3)$, resolving an old conjecture of Erd\H{o}s. The systematic study of the function $\ex(n, T, H)$ for general $T$ and $H$, however, was initiated only recently by Alon and Shikhelman~\cite{ASh}.

Determining the function $\ex(n,T,H)$ asymptotically for arbitrary $T$ and $H$ seems to be a very difficult task and a generalization of Theorem~\ref{thm:ErdSto} to this broader context has yet to be discovered. On the positive side, a nowadays standard argument can be used to derive the following generalization of the Erd\H{o}s--Stone theorem to the case when $T$ is a complete graph from the aforementioned result of Erd\H{o}s.

\begin{theorem}
  \label{thm:Erdos-Stone-T}
  For any fixed nonempty graph $H$ and any integer $m \ge 2$,
  \[
    \ex(n, K_m, H) = \binom{\chi(H)-1}{m}\left(\frac{n}{\chi(H)-1}\right)^m + o(n^m).
  \]
\end{theorem}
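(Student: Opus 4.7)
Let $r = \chi(H)$. The plan is to prove matching asymptotic lower and upper bounds of $\binom{r-1}{m}(n/(r-1))^m$. For the lower bound, I take $G$ to be the Tur\'an graph $T(n, r-1)$. Being $(r-1)$-partite, $G$ is $H$-free, and a direct count (choose $m$ of its $r-1$ parts, then one vertex from each) yields
\[
  \ex(n, K_m, H) \ge \binom{r-1}{m}\left(\frac{n}{r-1}\right)^m - O(n^{m-1}).
\]

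For the upper bound, let $G$ be any $H$-free graph on $n$ vertices. I would proceed in three steps. First, I would show that $G$ contains only $o(n^r)$ copies of $K_r$. Suppose for contradiction that $G$ has at least $\alpha n^r$ copies of $K_r$ for some fixed $\alpha > 0$. Viewing these $r$-cliques as edges of an $r$-uniform hypergraph on $V(G)$ with $\Omega(n^r)$ edges, the classical theorem of Erd\H{o}s on Tur\'an densities of complete $r$-partite $r$-uniform hypergraphs provides, for any fixed $t$, pairwise disjoint sets $V_1, \ldots, V_r \subseteq V(G)$ of size $t$ each such that every transversal $(v_1, \ldots, v_r) \in V_1 \times \cdots \times V_r$ spans an $r$-clique in $G$. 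This exhibits a complete $r$-partite subgraph $K_r(t, \ldots, t) \subseteq G$, which for $t = v(H)$ contains a copy of $H$ (as $H$ is $r$-colorable, any proper $r$-coloring embeds it into $K_r(t, \ldots, t)$), contradicting the $H$-freeness of $G$.

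Second, by the graph removal lemma applied to $K_r$, I can delete $o(n^2)$ edges from $G$ to obtain a $K_r$-free subgraph $G'$. Third, by a classical theorem of Erd\H{o}s (extending Zykov's symmetrization), the Tur\'an graph $T(n, r-1)$ maximizes the number of copies of $K_m$ among all $K_r$-free graphs on $n$ vertices, so $G'$ contains at most $\binom{r-1}{m}(n/(r-1))^m + O(n^{m-1})$ copies of $K_m$. Since each deleted edge lies in at most $\binom{n-2}{m-2} = O(n^{m-2})$ copies of $K_m$, the deletions destroy only $o(n^2) \cdot O(n^{m-2}) = o(n^m)$ copies, giving the matching upper bound on $\ex(n, K_m, H)$.

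The main obstacle is the first step, which is where the chromatic number of $H$ enters: one must upgrade the hypothesis ``$H$-free'' to the strictly stronger ``subpolynomially many copies of $K_r$''. The subsequent removal-lemma and clique-counting steps are then routine. An alternative derivation of Step 1 would apply the Erd\H{o}s--Stone theorem directly to $K_r(t, \ldots, t)$ (which has chromatic number $r$, hence the same Tur\'an density as $K_r$), combined with a standard averaging argument.
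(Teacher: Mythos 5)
Your proof is correct and supplies exactly the ``nowadays standard argument'' that the paper alludes to but does not spell out: derive the result from Erd\H{o}s's theorem (that $T(n,r-1)$ maximizes the number of $m$-cliques among $K_r$-free graphs) via supersaturation (an $H$-free graph has $o(n^r)$ copies of $K_r$, by Erd\H{o}s's hypergraph theorem applied to the clique hypergraph), the removal lemma for $K_r$, and the observation that deleting $o(n^2)$ edges destroys only $o(n^m)$ copies of $K_m$. A minor remark: the removal-lemma step can be bypassed by a lighter tool, namely the averaging-based supersaturation of Lemma~\ref{lem:satLem-easy} applied with $T=K_m$ and $H=K_r$, whose contrapositive says that a graph with $o(n^r)$ copies of $K_r$ has at most $\ex(n,K_m,K_r)+o(n^m)$ copies of $K_m$; this gives the same conclusion without invoking removal, but your route is equally valid.
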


Analogously to Theorem~\ref{thm:ErdSto}, in the case $\chi(H) \le m$, the above theorem only tells us that $\ex(n, K_m, H) = o(n^m)$. The following simple proposition generalizes this fact. A \emph{blow-up} of a graph $T$ is any graph obtained from $T$ by replacing each of its vertices with an independent set and each of its edges with a complete bipartite graph between the respective independent sets.

\begin{proposition}[\cite{ASh}]
  \label{prop:PropositionNotBU}
  Let $T$ be a fixed graph with $t$ vertices. Then $\ex(n,T,H) =\Omega(n^t)$ if and only if $H$ is not a subgraph of  a blow-up of $T$. Otherwise, $\ex(n, T, H) \le n^{t-c}$ for some $c >0$ that depends only on $T$ and $H$.
\end{proposition}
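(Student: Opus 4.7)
The plan is to prove the two implications of the equivalence separately. For the direction ``$H$ is not a subgraph of any blow-up of $T$ $\Longrightarrow \ex(n,T,H) = \Omega(n^t)$'', the natural construction works: the balanced blow-up $T[\lfloor n/t\rfloor]$ is $H$-free by hypothesis, and any transversal (one vertex per part) spans a copy of $T$, yielding $\lfloor n/t\rfloor^t = \Omega(n^t)$ copies.

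For the converse, fix an integer $s$ with $H \subseteq T[s]$, let $G$ be any $H$-free graph on $n$ vertices, and fix an ordering $v_1, \ldots, v_t$ of $V(T)$. I would form the auxiliary $t$-partite $t$-uniform hypergraph $\mathcal{F}$ whose parts $V_1, \ldots, V_t$ are disjoint copies of $V(G)$ and whose edges are the tuples $(u_1, \ldots, u_t) \in V_1 \times \cdots \times V_t$ for which $v_i \mapsto u_i$ is an embedding of $T$ into $G$; the number of edges of $\mathcal{F}$ equals $|\Aut(T)|$ times the number of copies of $T$ in $G$. The main tool is the hypergraph K\H{o}v\'ari--S\'os--Tur\'an theorem of Erd\H{o}s: there is a constant $C = C(s,t)$ such that every $t$-partite $t$-uniform hypergraph on parts of size $n$ with more than $C n^{t - 1/s^{t-1}}$ edges contains a complete $t$-partite sub-hypergraph with all parts of size $s$. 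If $G$ has sufficiently many copies of $T$, applying this to $\mathcal{F}$ produces sets $S_i \subseteq V_i$ of size $s$ with $S_1 \times \cdots \times S_t \subseteq E(\mathcal{F})$. The key observation is that, because each edge of $\mathcal{F}$ is an \emph{embedding}, the $t$ projections of such a tuple to $V(G)$ are all distinct, which automatically forces the projected sets $\pi_i(S_i) \subseteq V(G)$ to be pairwise disjoint. Consequently, for every edge $v_iv_j$ of $T$ and every $u \in \pi_i(S_i)$, $u' \in \pi_j(S_j)$, the pair $uu'$ is an edge of $G$, so $G$ contains a copy of $T[s]$ and hence of $H$, contradicting $H$-freeness. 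This yields $\ex(n,T,H) \le n^{t-c}$ for, e.g., $c = 1/(2s^{t-1})$ and $n$ large.

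The main point to get right is the setup of $\mathcal{F}$: using embeddings (rather than arbitrary labelings of $V(T)$) is what makes the projections $\pi_i(S_i)$ automatically pairwise disjoint, bypassing what would otherwise be an extra pigeonhole step to extract disjoint ``blob'' sets from the complete $t$-partite sub-hypergraph supplied by the hypergraph K\H{o}v\'ari--S\'os--Tur\'an theorem.
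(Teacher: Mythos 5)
The paper cites this proposition from Alon and Shikhelman~\cite{ASh} without reproducing a proof, so there is no in-paper argument to compare against; I will assess your proof on its own. It is correct and follows what I understand to be the same route as~\cite{ASh}: the lower bound via the balanced blow-up $T[\lfloor n/t\rfloor]$ (padded with isolated vertices to reach $n$ vertices), and the upper bound via Erd\H{o}s's hypergraph extension of K\H{o}v\'ari--S\'os--Tur\'an applied to the $t$-partite $t$-uniform auxiliary hypergraph $\mathcal{F}$ of embeddings of $T$ into $G$. Two small points are worth making explicit. First, the claim that the projections $\pi_i(S_i)$ are pairwise disjoint does not follow from the injectivity of a single tuple, but from the fact that \emph{every} tuple in $S_1\times\cdots\times S_t$ is an edge of $\mathcal{F}$: if some $w\in\pi_i(S_i)\cap\pi_j(S_j)$, pick the tuple whose $i$th and $j$th coordinates both project to $w$; this tuple lies in the grid, hence is an edge of $\mathcal{F}$, hence an embedding, contradicting injectivity. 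Second, the complete $t$-partite sub-hypergraph produced by Erd\H{o}s's theorem automatically has its parts aligned with $V_1,\dotsc,V_t$, since every edge of $\mathcal{F}$ meets each $V_i$ in exactly one vertex; this makes the application of the partite version legitimate. With these clarifications, the argument is complete, and your remark that working with embeddings (rather than arbitrary labellings) is what renders the disjointness automatic is exactly the right thing to emphasize.
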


We remark that both the problems of (i)~determining the limit of $\ex(n, T, H) \cdot n^{-t}$ for general $T$ and $H$ such that $H$ is not contained in a blow-up of $T$ and (ii)~computing $\ex(n,T,H)$ up to a constant factor for arbitrary $T$ and $H$ such that $H$ is contained in a blow-up of $T$ seem extremely difficult. Even the case $T = K_2$ of (ii) alone, that is, determining the order of magnitude of the Tur\'an function $\ex(n, H)$ for an arbitrary bipartite graph $H$ is a notorious open problem, see~\cite{FurSim13}.

The common generalization of Theorems~\ref{thm:RegRandTur} and~\ref{thm:Erdos-Stone-T} was considered in~\cite{ASK}. Let $\ex\big(G(n,p),T,H\big)$ be the random variable that counts the maximum number of copies of $T$ in an $H$-free subgraph of $G(n,p)$. Generalizing the easy argument that yields~\eqref{eq:Erdos-Stone-lower-Gnp}, one can show that the inequality
\[
  \ex\big( G(n,p), T, H \big) \ge \left(\ex(n,T,H) + o\big(n^{v_T}\big)\right) p^{e_T}
\] 
holds (w.h.p.) whenever $p \gg n^{-{v_{T'}}/{e_{T'}}}$ for every nonempty $T' \subseteq T$; it is well-known that if $p = O(n^{-{v_{T'}}/{e_{T'}}})$ for some $T' \subseteq T$, then $G(n,p)$ contains no copies of $T$ with probability $\Omega(1)$. It seems natural to guess that the opposite inequality holds as soon as $p \gg n^{-1/m_2(H)}$. The case $T=K_m$ was studied in~\cite{ASK}, where the following generalization of Theorem~\ref{thm:RegRandTur} was proved.

\begin{theorem}[\cite{ASK}]
  \label{thm:m_2InEasyOrder}
  Let $m \ge 2$ be an integer and let $H$ be a fixed graph with $m_2(H) > m_2(K_m)$ and $\chi(H) > m$. If $p$ is such that $\binom{n}{m}p^{\binom{m}{2}}$ tends to infinity with $n$, then w.h.p.\
  \[
    \ex\big(G(n,p), K_m, H\big)=
    \begin{cases}
      (1+o(1)) \cdot \binom{\chi(H)-1}{m} \left(\frac{n}{\chi(H)-1}\right)^m p^{\binom{m}{2}}& \text{if } p\gg n^{ -1/m_2(H)}, \\
      (1+o(1)) \cdot \binom{n}{m}p^{\binom{m}{2}} & \text{if }p\ll n^{ -1/m_2(H)}.
    \end{cases}
  \]
\end{theorem}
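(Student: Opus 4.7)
The theorem asserts two upper bounds and two matching lower bounds, and I would attack them separately.  Fix a $2$-density-extremal subgraph $H^*\subseteq H$, so that $m_2(H^*) = m_2(H)$ and $H^*$ is $2$-balanced; destroying copies of $H^*$ makes any graph $H$-free.  For the subcritical lower bound ($p\ll n^{-1/m_2(H)}$) I start with $G(n,p)$ itself, which contains $(1+o(1))\binom{n}{m}p^{\binom{m}{2}}$ copies of $K_m$ w.h.p.\ by Janson's inequality.  Greedily deleting one edge from each copy of $H^*$ yields an $H$-free subgraph.  A first-moment calculation summing, over all common subgraphs $J$ with at least one edge, $n^{v_{H^*}+m-v_J}p^{e_{H^*}+\binom{m}{2}-e_J}$ shows that only $o(n^m p^{\binom{m}{2}})$ copies of $K_m$ are destroyed, precisely because $p\ll n^{-1/m_2(H)} = n^{-(v_{H^*}-2)/(e_{H^*}-1)}$.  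For the supercritical lower bound I intersect $G(n,p)$ with the Tur\'an graph $T_{\chi(H)-1}(n)$, which is $H$-free and contains $(1+o(1))\binom{\chi(H)-1}{m}(n/(\chi(H)-1))^m$ copies of $K_m$; Janson or Chebyshev shows that a typical $p^{\binom{m}{2}}$-fraction of them survive.  The subcritical upper bound is trivial, since $\ex(G(n,p),K_m,H)$ is dominated by the total $K_m$-count of $G(n,p)$, which concentrates given $\binom{n}{m}p^{\binom{m}{2}}\to\infty$.

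The only nontrivial direction is the supercritical upper bound.  I would apply the hypergraph container method to the hypergraph $\HH$ on vertex set $E(K_n)$ whose hyperedges are the edge-sets of copies of $H$ in $K_n$, producing a family $\cC$ of subgraphs of $K_n$ with $\log|\cC| = O(n^{2-1/m_2(H)}\log n)$ and each containing at most $o(n^{v_H})$ copies of $H$, such that every $H$-free subgraph of $K_n$ lies inside some $C\in\cC$.  The graph-removal lemma then shows that every $C\in\cC$ can be made $H$-free by deleting $o(n^2)$ edges, and since changing $o(n^2)$ edges perturbs the $K_m$-count by $o(n^m)$, Theorem~\ref{thm:Erdos-Stone-T} bounds the number of $K_m$-copies in each $C$ by $(1+o(1))\binom{\chi(H)-1}{m}(n/(\chi(H)-1))^m$.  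For each fixed $C$ a suitable concentration inequality — such as Janson's for the lower tail, or an upper-tail bound built on the low $K_m$-density of $C$ — controls the number of $K_m$-copies in $G(n,p)\cap C$, and a union bound over $\cC$ closes the argument.

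The main obstacle is the parameter balance in this last step.  The union bound succeeds iff the concentration failure probability is below $|\cC|^{-1}$; at the critical scale $p\asymp n^{-1/m_2(H)}$ this reduces to $n^m p^{\binom{m}{2}}\gg n^{2-1/m_2(H)}$, which rearranges to $m_2(H) > (\binom{m}{2}-1)/(m-2) = m_2(K_m)$ — exactly the hypothesis.  The companion hypothesis $\chi(H)>m$ ensures $\ex(n,K_m,H) = \Theta(n^m)$ (by Theorem~\ref{thm:Erdos-Stone-T}), making the container/removal step give a genuine main-term bound rather than a vacuous $o(n^m)$.  I expect the subtlest point to be the upper-tail concentration for clique counts in $G(n,p)\cap C$, since upper tails for subgraph statistics are genuinely delicate; the standard workaround is to exploit the low clique density of $C$ to set up an auxiliary hypergraph on which Janson's inequality applies.
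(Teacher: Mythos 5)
Your plan correctly identifies the paper's framework for the supercritical upper bound (hypergraph containers on the $H$-copy hypergraph, supersaturation, concentration, union bound), and your lower-bound arguments are essentially the standard ones used in Section~\ref{sec:proof-theorem-main-easy} of the paper (Tur\'an graph intersection, greedy deletion). However, there are two concrete gaps in the critical final step.

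First, the workaround for the upper-tail problem: you flag the difficulty but only gesture at an ``auxiliary hypergraph.'' The paper's actual device is a complementation trick. One first establishes $\N_{K_m}(G)=(1+o(1))\E[\N_{K_m}(G)]$ by the second moment method (Lemma~\ref{lem:edgeCopiesT}), and then for each container $C$ one bounds the \emph{lower} tail of $\lvert T(G)\setminus T(C)\rvert$, the number of $K_m$-copies in $G$ \emph{not} contained in $C$. Since $\N_{K_m}(G\cap C)=\N_{K_m}(G)-\lvert T(G)\setminus T(C)\rvert$, this converts the upper-tail estimate you need into a lower-tail estimate, to which Janson's inequality (Lemma~\ref{lem:NumberOfCopiesOfT}) applies with an acceptable rate. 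This is not cosmetic: as the paper notes (citing~\cite{JOR}), the genuine upper tail of $\N_{K_m}$ is polynomially too slow for any union bound here, so the direct route you propose cannot be made to work.

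Second, your parameter-balance analysis uses the wrong concentration rate and consequently omits a necessary ingredient. Since $m_2(H)>m_2(K_m)$ implies $n^{-1/m_2(H)}>n^{-1/m_2(K_m)}$, the whole supercritical range $p\gg n^{-1/m_2(H)}$ lies above the $K_m$-threshold, where Lemma~\ref{lemma:Psi-T} gives $\min\{n^{v_{T'}}p^{e_{T'}}\}=n^2p$, not $n^mp^{\binom{m}{2}}$. Thus Janson gives failure probability $\exp(-\Omega(n^2p))$ per container. The hypothesis $m_2(H)>m_2(K_m)$ is used precisely to guarantee we are in this regime; it does not, as you suggest, make the naive union bound tight. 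Indeed, comparing $\exp(-\Omega(n^2p))$ to $\lvert\cC\rvert=\exp\bigl(\Theta(n^{2-1/m_2(H)}\log n)\bigr)$ requires $p\gg n^{-1/m_2(H)}\log n$, which loses a $\log n$ factor against the theorem's hypothesis $p\gg n^{-1/m_2(H)}$. The paper closes this gap by not union-bounding over all containers: it applies Harris's inequality (Theorem~\ref{thm:Harris}) to decouple the increasing event $\{S\subseteq G\}$ from the decreasing event $\cA_S$, replacing the cost $\lvert\cS\rvert$ with $\sum_{S\in\cS}\Pr(S\subseteq G)\le\exp(o(n^2p))$, which suffices for all $p\gg n^{-1/m_2(H)}$. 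Your write-up should incorporate both the complementation trick and the Harris step to be complete.
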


Let us draw the reader's attention to the assumption that $m_2(H) > m_2(K_m)$ in the statement of the theorem. No such assumption was (explicitly) present in the statement of Theorem~\ref{thm:RegRandTur} and it is natural to wonder whether it is really necessary. Since we assume that $H$ is not $m$-colorable, then it must contain a subgraph whose average degree is at least $m$, larger than the average degree of $K_m$. In particular, it is natural to guess that this implies that the $2$-density of $H$ is larger than the $2$-density of $K_m$. Perhaps surprisingly, this is not true and only the weaker inequality $m_2(H) > m_2(K_{m-1})$ does hold for every non-$m$-colorable graph $H$. A construction of a graph $H$ such that $\chi(H)=4$ and $m_2(H)<m_2(K_3)$ was given in~\cite{ABGKR}. Subsequently, constructions of graphs $H$ such that $\chi(H)=m+1$ but $m_2(H) < m_2(K_m)$ were given for all $m$ in \cite{ASK}. It was also shown there that for such graphs $H$, the typical value of $\ex\big(G(n,p), K_m, H\big)$ does not change at $p=n^{-1/m_2(H)}$, as in Theorem~\ref{thm:m_2InEasyOrder}. More precisely, if $p=n^{-1/m_2(H)+\delta}$ for some small but fixed $\delta = \delta(H) > 0$, then still $\ex\big(G(n,p),K_m,H\big)=(1+o(1)) \cdot \binom{n}{m}p^{\binom{m}{2}}$. This led to the following open questions:
\begin{enumerate}[label=(\roman*)]
\item
  Where does the `phase transition' of $\ex\big(G(n,p), K_m, H\big)$ take place if $m_2(H) \le m_2(K_m)$?
\item
  \label{item:exGnpTH-growth}
  How does the function $p \mapsto \ex\big(G(n,p), T, H\big)$ grow for general $T$ and $H$?
\end{enumerate}
In this paper we answer both of these questions under the assumptions that $T$ is $2$-balanced and $H$ is not contained in a blow-up of $T$. Answering question~\ref{item:exGnpTH-growth} in the case when $H$ is contained in a blow-up of $T$ seems extremely challenging, as even the order of magnitude of $\ex(n, T, H)$, which corresponds to setting $p=1$ above, is not known for general graphs $T$ and $H$, see the comment below Proposition~\ref{prop:PropositionNotBU}. 

The case when $m_2(H) > m_2(T)$ holds no surprises, as the following extension of Theorem~\ref{thm:m_2InEasyOrder} is valid. We denote by $\N_T(K_n)$ the number of copies of a graph $T$ in the complete graph $K_n$.

\begin{theorem}
  \label{thm:main-easy}
  If $H$ and $T$ are fixed graphs such that $T$ is $2$-balanced and that $m_2(H) > m_2(T)$, then w.h.p.\
  \[
    \ex\big( G(n,p),T,H \big) =
    \begin{cases}
      \left( \N_T(K_n) + o\big(n^{v_T}\big)\right) p^{e_T} & \text{if } n^{-v_T/e_T} \ll p\ll n^{ -1/m_2(H)},\\
      \left( \ex\big(n,T,H\big) + o\big(n^{v_T}\big)\right) p^{e_T} & \text{if } p\gg n^{ -1/m_2(H)}.
    \end{cases}
  \]
\end{theorem}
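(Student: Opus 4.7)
My plan is to split the proof by regime and, within each, establish matching lower and upper bounds. Three of the four bounds follow from established techniques; the dense-regime upper bound is the core.

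\textbf{Sparse regime} ($p \ll n^{-1/m_2(H)}$). The upper bound is immediate: w.h.p.\ $\N_T(G(n,p)) \le (\N_T(K_n) + o(n^{v_T})) p^{e_T}$ by a standard second-moment computation for the number of copies of $T$ in $G(n,p)$, valid whenever $p \gg n^{-v_T/e_T}$ since $T$ is $2$-balanced. For the lower bound I apply the deletion method using a $2$-balanced subgraph $H' \subseteq H$ with $m_2(H') = m_2(H)$; the hypothesis $m_2(H) > m_2(T)$ forces $H'$ not to be a subgraph of $T$ (otherwise $m_2(H) = m_2(H') \le m_2(T)$). Starting from $G(n,p)$, delete one edge from each copy of $H'$; the result is $H'$-free, hence $H$-free. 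A first-moment estimate, summing over isomorphism types $J$ of the common subgraph of a copy of $T$ and a copy of $H'$ with $e_J \ge 1$, gives an expected number of destroyed copies of $T$ of order $\sum_{J} n^{v_T + v_{H'} - v_J} p^{e_T + e_{H'} - e_J}$. Since $J \subsetneq H'$ and $H'$ is $2$-balanced, a mediant-of-fractions argument (applied to $(e_{H'}-1)/(v_{H'}-2) = m_2(H)$ and $(e_J-1)/(v_J-2) \le m_2(H)$) yields $(e_{H'}-e_J)/(v_{H'}-v_J) \ge m_2(H)$, so every term is $o(n^{v_T} p^{e_T})$ when $p \ll n^{-1/m_2(H)}$. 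Combined with concentration of $\N_T(G(n,p))$, this gives the claimed lower bound.

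\textbf{Dense regime} ($p \gg n^{-1/m_2(H)}$). For the lower bound, fix an $H$-free graph $G^* \subseteq K_n$ achieving $\N_T(G^*) = \ex(n,T,H)$ and consider the $H$-free random graph $G^* \cap G(n,p)$. Its $T$-count has expectation $\ex(n,T,H) \cdot p^{e_T}$ and is concentrated by Janson's inequality in this range of $p$. The upper bound is the heart of the theorem. The plan is to produce, via the hypergraph container method (Saxton--Thomason, Balogh--Morris--Samotij), a family $\C$ of graphs on $[n]$ satisfying: (i)~every $H$-free graph on $[n]$ is a subgraph of some $C \in \C$; (ii)~$\log|\C| = o(n^2 p)$; and (iii)~$\N_T(C) \le \ex(n,T,H) + o(n^{v_T})$ for every $C \in \C$. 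Given such a $\C$, each $H$-free $G' \subseteq G(n,p)$ is contained in $C \cap G(n,p)$ for some $C \in \C$, and $\N_T(G') \le \N_T(C \cap G(n,p))$. A Janson-type concentration estimate for $\N_T(C \cap G(n,p))$ around $\N_T(C) \cdot p^{e_T}$, combined with a union bound over $\C$, finishes the argument; the union bound closes precisely in the regime $p \gg n^{-1/m_2(H)}$, where the concentration exponent dominates $\log|\C|$.

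\textbf{Main obstacle.} The hard step is arranging property (iii), which strengthens the classical container theorem (bounding only $e(C)$) to controlling $\N_T(C)$. The key technical ingredient I anticipate needing is a balanced supersaturation result for copies of $T$: every graph on $[n]$ with substantially more than $\ex(n,T,H)$ copies of $T$ must contain a well-distributed family of copies of $H$. Threading such a statement through the container algorithm, while preserving (ii), is the principal technical difficulty.
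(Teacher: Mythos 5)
Your overall plan — second moment plus deletion in the sparse regime, hypergraph containers plus supersaturation in the dense regime — is the right one, and three of your four bounds are essentially correct. Your sparse-regime lower bound (delete one edge from each copy of a $2$-balanced $H'\subseteq H$ and control the damage via a first moment and the mediant inequality) is a clean alternative to the paper's argument, which instead passes through a subgraph $G^*$ in which every edge lies in exactly one copy of $T$ and then reasons about $T$-coverings, a phrasing chosen to foreshadow Theorem~\ref{thm:main}. The dense-regime lower bound is also fine. However, your "main obstacle" is misidentified: property~(iii) is routine. One applies Theorem~\ref{thm:Cont} to the $e_H$-uniform hypergraph on $E(K_n)$ whose edges are the copies of $H$; with $q=n^{-1/m_2(H)}$ the degree hypotheses hold automatically, the resulting containers have $o(n^{v_H})$ copies of $H$, and an \emph{ordinary} supersaturation lemma in the direction ``$\N_T(G)\ge\ex(n,T,H)+\delta n^{v_T}$ forces $\Omega(n^{v_H})$ copies of $H$'' (Lemma~\ref{lem:satLem-easy}) converts this into $\N_T(C)\le\ex(n,T,H)+o(n^{v_T})$. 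No balanced supersaturation for $T$-copies is needed here (that sort of machinery only enters in the proof of Theorem~\ref{thm:main}, where the containers live on $T(K_n)$).

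The genuine gap is in the final union bound. You write $\N_T(G')\le\N_T(C\cap G(n,p))$ and propose to finish with ``a Janson-type concentration estimate for $\N_T(C\cap G(n,p))$ around $\N_T(C)\cdot p^{e_T}$, combined with a union bound over $\C$.'' But Janson's inequality controls only the \emph{lower} tail of a subgraph count, and the bound you need on $\N_T(C\cap G)$ is from \emph{above}. The upper tail of subgraph counts in $G(n,p)$ decays far too slowly (see~\cite{JOR}) to survive a union bound over $\exp\big(\Theta(n^{2-1/m_2(H)}\log n)\big)$ containers when $p$ is only slightly above $n^{-1/m_2(H)}$; this is exactly the obstruction the paper flags in its proof outline. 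The fix is to flip the inequality: write $\N_T(C\cap G)=\N_T(G)-|T(G)\setminus T(C)|$, bound $\N_T(G)\le(1+o(1))\N_T(K_n)p^{e_T}$ once by the second-moment method (a single event — no union bound), and bound $|T(G)\setminus T(C)|$ from \emph{below} via Janson's lower tail (Lemma~\ref{lem:NumberOfCopiesOfT}), whose failure probability $\exp(-\Theta(n^2p))$ does survive the union bound. Even then the union bound cannot be taken naively over $|\cS|$: one must sum the products $\Pr(S\subseteq G)\cdot\Pr(\cA_S)$ over fingerprints $S$, after decoupling the increasing event $\{S\subseteq G\}$ from the decreasing bad event $\cA_S$ via Harris's inequality, so that the sum is $\exp\big(o(n^2p)\big)\cdot\exp\big(-\Theta(n^2p)\big)=o(1)$.
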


As already hinted at by~\cite{ASK}, the case when $m_2(H) \le m_2(T)$ exhibits a more complex behavior. We find that there are several potential `phase transitions' and we relate their locations to a measure of density of various coverings of $H$ with copies of $T$ that generalizes the notion of the $2$-density of $H$. Moreover, we show that the (typical) asymptotic value of $\ex\big(G(n,p), T, H\big)$ is determined, for every $p$ that does not belong to any of the constantly many `phase transition windows', by a solution of a deterministic hypergraph Tur\'an-type problem. Unfortunately, we were unable to solve this Tur\'an-type problem in full generality. Worse still, we do not understand it sufficiently well to either show that for some pairs of $T$ and $H$, the function $p \mapsto \ex\big(G(n,p), T, H\big)$ undergoes more than one `phase transition' or to rule out the existence of such pairs. We leave these questions as a challenge for future work.

In order to make the above discussion formal and state the main theorem, we will require several definitions.

\subsection{Notations and definitions}
\label{sec:notat-defin}

A \emph{$T$-covering} of $H$ is a minimal collection $F = \{T_1, \dotsc, T_k\}$ of pairwise edge-disjoint copies of $T$ (in a large complete graph) whose union contains a copy of~$H$.\footnote{The collection $F = \{T_1, \dotsc, T_k\}$ is minimal in the sense that for every $i \in [k]$, the union of all graphs in $F \setminus \{T_i\}$ no longer contains a copy of $H$.} Given two $T$-coverings $F = \{T_1, \dotsc, T_k\}$ and $F' = \{T_1', \dotsc, T_k'\}$, a map $f$ from the union of the vertex sets of the $T_i$s to the union of the vertex sets of the $T_i'$s is an \emph{isomorphism} if it is a bijection and for every $T_i \in F$, the graph $f(T_i)$ belongs to $F'$. We can then say that the \emph{type} of a $T$-covering of $H$ is just the isomorphism class of this covering. Observe that there are only finitely many types of $T$-coverings of $H$. One special type of a $T$-covering of $H$ that will be important in our considerations is the covering of $H$ with $e_H$ copies of $T$ such that each copy of $T$ intersects $H$ in a single edge and is otherwise completely (vertex) disjoint from the remaining $e_H-1$ copies of $T$ that constitute this covering. We denote this covering by $F_{T,H}^e$ and note that the union of all members of $F_{T,H}^e$ is a graph with $v_H + e_H (v_T-2)$ vertices and $e_H e_T$ edges.

For a collection $F'$ of copies of $T$, denote by $U(F')$ the \emph{underlying graph} of $F'$, that is, the union of all members of $F'$. We define the \emph{$T$-density} of a $T$-covering $F$, which we shall denote by $m_T(F)$, as follows:
\[
  m_T(F) = \max\left\{\frac{e_{U(F')}-e_T}{v_{U(F')}-v_T}: F' \subseteq F, |F'| \ge 2\right\}.
\]
Note that this generalizes the notion of $2$-density of a graph. Indeed, the $2$-density of $H$ is the $K_2$-density of (the edge set of) $H$. The notion of $T$-density is motivated by the following observation. For graphs $G$ and $T$, we let $T(G)$ denote the collection of copies of $T$ in $G$ and let $\N_T(G) = |T(G)|$.

\begin{remark}
  \label{remark:m-T-F}
  For every collection $F$ of at least two copies of $T$,
  \[
    \E\left[\N_{U(F')}\big(G(n,p)\big)\right] \ll \E\left[\N_T\big(G(n,p)\big)\right] \text{ for some $F' \subseteq F$} \quad \Longleftrightarrow \quad p \ll n^{-1/m_T(F)}.
  \]  
\end{remark}

Even though we are interested in maximizing $\N_T(G)$ in an $H$-free subgraph $G \subseteq K_n$, we shall be considering (more general) abstract collections of $T$-copies in $K_n$ that do not contain a $T$-covering of $H$ of a certain type (or a set of types). In particular, if $G \subseteq K_n$ is $H$-free, then $T(G)$ is one such collection of $T$-copies, as it does not contain any $T$-covering of $H$ (since the underlying graph of every $T$-covering of $H$ contains $H$ as a subgraph). However, not all the collections we shall consider will be `graphic', that is, of the form $T(G)$ for some graph $G$. 

The aforementioned Tur\'an-type problem for hypergraphs asks to determine the following quantity. For a given family $\F$ of $T$-coverings of $H$, we let $\exx(n,T,\F)$ be the maximum size of a collection of copies of $T$ in $K_n$ that does not contain any member of $\F$. Note that for any collection $\F$ of $T$-coverings of $H$, one has that $\exx(n,T,\F) \ge \ex(n, T, H)$. Indeed, if $G$ is an $H$-free graph with $n$ vertices such that $\ex(n,T,H)=\N_T(G)$, then $T(G)$ is $\F$-free. However, this inequality can be strict, as not every collection of $T$-copies is of the form $T(G)$ for some graph $G$. Having said that, we shall show in Lemma~\ref{lem:Fe-exnTH} that at least $\exx(n,T,F_{T,H}^e) = \ex(n,T,H) + o(n^{v_T})$. We are now equipped to formulate the key definition needed to state our main result.

\begin{definition}
  \label{def:T-resolution}
  Suppose that $T$ and $H$ are fixed graphs and assume that $T$ is $2$-balanced. The \emph{$T$-resolution} of $H$ is the sequence $F_1, \dotsc, F_k$ of all types of $T$-coverings of $H$ whose $T$-density does not exceed $m_T(F_{T,H}^e)$, ordered by their $T$-density (with ties broken arbitrarily). The \emph{associated threshold sequence} is the sequence $p_0, p_1, \dotsc, p_k$, where $p_0 = n^{-v_T / e_T}$ and $p_i = n^{-1/m_T(F_i)}$ for $i \in [k]$.
\end{definition}

\subsection{Statement of the main theorem}

The following theorem is the main result of this paper.

\begin{theorem}
  \label{thm:main}
  Suppose that $H$ and $T$ are fixed graphs and assume that $T$ is $2$-balanced and that $m_2(H) \le m_2(T)$. Let $F_1, \dotsc, F_k$ be the $T$-resolution of $H$ and let $p_0, p_1, \dotsc, p_k$ be the associated threshold sequence. Then the following hold for every $i \in [k]$:
  \begin{enumerate}[label=(\roman*)]
  \item
    \label{item:main-0-statement}
    If $p_0 \ll p \ll p_i$, then w.h.p.\
    \[
      \ex\big(G(n,p),T,H\big) \ge \left( \exx\big(n,T,\{F_1, \dotsc, F_{i-1}\}\big) + o\big(n^{v_T}\big)\right) p^{e_T}.
    \]
  \item
    \label{item:main-1-statement}
    If $ p\gg p_i$, then w.h.p.\
    \[
      \ex\big(G(n,p),T,H\big) \le \left( \exx\big(n,T,\{F_1, \dotsc, F_i\}\big) + o\big(n^{v_T}\big)\right) p^{e_T}.
    \]
  \end{enumerate}
\end{theorem}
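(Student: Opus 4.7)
The proof splits into the two directions, approached by different methods. For the lower bound in (i), I would use a construction plus a deletion. Fix a near-extremal $\{F_1, \dotsc, F_{i-1}\}$-free collection $\cP^* \subseteq T(K_n)$ with $|\cP^*| = \exx(n, T, \{F_1, \dotsc, F_{i-1}\})$, and let $\cP^*_p$ consist of those members of $\cP^*$ all of whose $e_T$ edges appear in $G(n,p)$. A routine second-moment computation, using $2$-balancedness of $T$ and $p \gg p_0$, yields $|\cP^*_p| = (1-o(1)) \cdot |\cP^*| \cdot p^{e_T}$ w.h.p. It remains to destroy, at a cost of losing only $o(n^{v_T} p^{e_T})$ copies of $T$, every $T$-covering of $H$ whose members lie in $\cP^*_p$. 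Since $\cP^*$ is $\{F_1, \dotsc, F_{i-1}\}$-free, any such covering has type either among $F_i, \dotsc, F_k$ or with $T$-density exceeding $m_T(F_{T,H}^e)$. For each such type $F$, pick a subcollection $F' \subseteq F$ witnessing $m_T(F)$; Remark~\ref{remark:m-T-F} combined with $p \ll p_i$ gives $\E[\N_{U(F')}(G(n,p))] = o(n^{v_T} p^{e_T})$ for types in $\{F_i, \dotsc, F_k\}$, and a similar bound for types outside the $T$-resolution is obtained using an additional structural observation (a density-witnessing $F^* \subseteq F$ would otherwise already realize a covering forbidden by $\{F_1, \dotsc, F_{i-1}\}$-freeness of $\cP^*$). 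Deleting one member from each bad covering and taking $G^* \subseteq G(n,p)$ to be the union of the surviving copies yields an $H$-free subgraph with the required number of $T$-copies.

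For the upper bound in (ii), I would invoke the hypergraph container method on the auxiliary hypergraph $\HH$ on vertex set $T(K_n)$ whose hyperedges are the $T$-coverings of $H$ of types $F_1, \dotsc, F_i$. Its independent sets are precisely the $\{F_1, \dotsc, F_i\}$-free collections, of size at most $\exx(n, T, \{F_1, \dotsc, F_i\})$. A container theorem would produce a small family $\C$ of subsets of $T(K_n)$ such that (a) every independent set of $\HH$ is contained in some $C \in \C$; (b) $|C| \le \exx(n, T, \{F_1, \dotsc, F_i\}) + o(n^{v_T})$ for every $C \in \C$; and (c) $\log|\C|$ is small enough that a union bound tolerates a Chernoff-type concentration of $|C \cap T(G(n,p))|$ around its mean $|C| p^{e_T}$. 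Given any $H$-free $G \subseteq G(n,p)$, the collection $T(G)$ is $\{F_1, \dotsc, F_i\}$-free and hence lies in some container $C$, so $|T(G)| \le |C \cap T(G(n,p))| \le \bigl(\exx(n, T, \{F_1, \dotsc, F_i\}) + o(n^{v_T})\bigr) p^{e_T}$ w.h.p.

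The main obstacle is direction (ii), where the container step requires two substantial inputs. First, a supersaturation-stability lemma for $\HH$: every $\{F_1, \dotsc, F_i\}$-free collection whose size is close to $\exx(n, T, \{F_1, \dotsc, F_i\})$ must be approximately contained in a near-extremal one --- a Tur\'an-type stability statement for abstract collections of $T$-copies, rather than for graphs, that must be proved from scratch. Second, the codegree hypotheses of the container lemma for $\HH$ translate, via Janson-type counting, into controlled bounds on the expected number of copies of $U(F')$ in $G(n,p)$ for subcoverings $F' \subseteq F_j$ with $j \le i$, and these are exactly what the hypothesis $p \gg p_i$ is calibrated to supply. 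A subtler technical point in (i) is that a $T$-covering requires pairwise edge-disjoint members; $2$-balancedness of $T$ keeps the expected number of pairs of $T$-copies in $G(n,p)$ that share an edge negligible throughout the relevant $p$-range, which makes the edge-disjointness condition essentially automatic.
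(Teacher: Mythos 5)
Your approach to part~(i) matches the paper's up to minor imprecision, but your ``additional structural observation'' for $T$-coverings outside the $T$-resolution is both unnecessary and, as stated, incorrect: the density-witnessing subcollection $F^* \subseteq F$ need not itself be a $T$-covering of $H$, so it cannot ``already realize a covering forbidden by $\F_{i-1}$-freeness.'' The intended argument is more direct: if $m_T(F) > m_T(F^e_{T,H})$, then since every $F_j$ in the resolution has $m_T(F_j) \le m_T(F^e_{T,H})$, in particular $m_T(F) > m_T(F_i)$, so $p \ll p_i \le n^{-1/m_T(F)}$, and Remark~\ref{remark:m-T-F} applies exactly as it does for $F_i, \dotsc, F_k$. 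Also, the paper first passes to the subgraph $G^*$ of Lemma~\ref{lem:edgeCopiesT} whose edges each lie in exactly one $T$-copy, so that every copy of $H$ really does correspond to a $T$-covering; your closing remark that edge-disjointness is ``essentially automatic'' by a second-moment estimate gestures at this but does not actually produce the edge-disjoint structure one needs to iterate the deletion argument cleanly.

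The more serious problem is in part~(ii), where your final step is a genuine gap. You propose to show that $|C \cap T(G(n,p))|$ concentrates around $|C|p^{e_T}$ by a ``Chernoff-type'' bound and then union-bound over the container family~$\C$. This is precisely what cannot be done: the quantity $|C \cap T(G(n,p))|$ is a subgraph-count-type random variable, and its \emph{upper} tail decays far too slowly (this is the infamous upper-tail problem for subgraph counts; see~\cite{JOR}) to survive a union bound over $\exp\big(\Theta(q\,n^{v_T}\log n)\big)$ containers. The paper's proof gets around this by never bounding an upper tail inside the union bound. Instead it (a)~uses the second-moment method once, outside the union bound, to pin down $\N_T(G) = (1+o(1))\E[\N_T(G)]$; (b)~applies Janson's inequality to the \emph{lower} tail of the complementary count $|T(G) \setminus f(S)|$, which does have exponential decay at the right rate (Lemma~\ref{lem:NumberOfCopiesOfT}); and then writes $|T(G) \cap f(S)| = \N_T(G) - |T(G) \setminus f(S)|$; and (c)~crucially uses Harris's inequality to decouple the event $\{S \subseteq T(G)\}$ (increasing) from the bad event $\cA_S$ (decreasing), so that the union bound is weighted by $\Pr(S \subseteq T(G))$ rather than over all of $\cS$. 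Even this requires two separate cases depending on whether $p \ll n^{-1/m_2(T)}$, with restricted container families $\cS'$ (edge-disjoint $T$-copies) and $\cS''$ (images of the fingerprint map, using the consistency property of $g$ and injectivity of $U$ on $\cS''$). None of this machinery is present in your sketch, and it is the heart of the proof. Finally, your call for a ``supersaturation-stability lemma'' is an overcomplication: ordinary supersaturation (Lemma~\ref{lem:satLem}) suffices to turn ``few copies of each $F_j$'' into ``$|f(S)| \le \exx(n,T,\F_i) + o(n^{v_T})$''; no approximate structure of near-extremal collections is needed.
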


Even though the above theorem determines the typical values of $\ex\big(G(n,p), T, H\big)$ for almost all $p$, these values remain somewhat of a mystery as we do not know how to compute $\exx\big(n, T, \{F_1, \dotsc, F_i\}\big)$ in general. One thing that we do know how to prove is that $\exx(n, T, \F) = \ex(n, T, H) + o(n^{v_T})$ for every family $\F$ of $T$-coverings of $G$ that contains the special covering $F_{T,H}^e$, see Lemma~\ref{lem:Fe-exnTH}. Moreover, it is not hard to verify that
\[
  m_T(F_{T,H}^e) = \frac{e_T}{v_T-2+1/m_2(H)},
\]
which, when $T$ is $2$-balanced, is equal to the so-called \emph{asymmetric $2$-density of $T$ and $H$}, a quantity that arises in the study of asymmetric Ramsey properties of $G(n,p)$, see~\cite{GugNenPerSkoSteTho17, KohKre97, KohSchSpo14, MouNenSam18}. Note that if $T$ is $2$-balanced and $m_2(H) < m_2(T)$, then $m_2(H) < m_T(F_{T,H}^e) < m_2(T)$. An `abbreviated' version of Theorem~\ref{thm:main} can be now stated as follows.

\begin{corollary}
  \label{cor:main}
  Suppose that $H$ and $T$ are fixed graphs and assume that $T$ is $2$-balanced and that $m_2(H) \le m_2(T)$. There is an integer $t \ge 1$ and rational numbers $\mu_0 < \dotsc < \mu_t$, where
  \[
    \mu_0 = \frac{e_T}{v_T} \qquad \text{and} \qquad \mu_k \le \frac{e_T}{v_T - 2 + 1/m_2(H)},
  \]
  and real numbers $\pi_0 > \dotsc > \pi_t$, where
  \[
    \pi_0 = \frac{1}{|\Aut(T)|} \qquad \text{and} \qquad \pi_t = \lim_{n \to \infty} \ex(n, T, H) \cdot n^{-v_T},
  \]
  such that w.h.p.
  \[
    \ex\big(G(n,p),T,H\big) =
    \begin{cases}
      (\pi_i + o(1)) n^{v_T}p^{e_T}, & n^{-1/\mu_i} \ll p \ll n^{-1/\mu_{i+1}} \quad \text{for $i \in \{0, \dotsc, t-1\}$}, \\
      (\pi_t + o(1)) n^{v_T}p^{e_T}, & p \gg n^{-1/\mu_t}.
    \end{cases}
  \]
\end{corollary}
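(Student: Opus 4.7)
The plan is to read off Corollary~\ref{cor:main} from Theorem~\ref{thm:main} by collapsing consecutive intervals of $p$ on which the asymptotic value of $\ex\bigl(G(n,p),T,H\bigr)$ is the same. The only real preparation required is to show that, for every fixed finite family $\F$ of $T$-coverings of $H$, the limit
\[
  \pi(\F) := \lim_{n\to\infty}\frac{\exx(n,T,\F)}{n^{v_T}}
\]
exists. This follows from a standard random-restriction argument: if $\mathcal{C}$ is a maximum $\F$-free collection of $T$-copies in $K_n$, then restricting $\mathcal{C}$ to the vertex set of a uniformly random $m$-element subset of $[n]$ yields an $\F$-free collection in $K_m$ whose expected size equals $|\mathcal{C}|\binom{m}{v_T}/\binom{n}{v_T}$, so $\exx(n,T,\F)/\binom{n}{v_T}$ is monotonically non-increasing in $n$ and therefore convergent.

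With the limits in hand, set $\pi^{(i)} := \pi\bigl(\{F_1,\dotsc,F_i\}\bigr)$ for $i=0,1,\dotsc,k$. This is a weakly decreasing sequence, with $\pi^{(0)} = 1/|\Aut(T)|$ (from $\N_T(K_n) \sim n^{v_T}/|\Aut(T)|$) and $\pi^{(k)} = \lim_{n\to\infty}\ex(n,T,H)\cdot n^{-v_T}$ (from Lemma~\ref{lem:Fe-exnTH}, using that the special covering $F_{T,H}^e$ belongs to the $T$-resolution by definition). Let $0=j_0<j_1<\dotsc<j_t$ enumerate the indices at which $\pi^{(i)}$ takes its distinct values, and set $\pi_l := \pi^{(j_l)}$, $\mu_l := m_T(F_{j_l})$ for $l \ge 1$, and $\mu_0 := e_T/v_T$. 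For each $l \in \{1,\dotsc,t-1\}$ and $p$ in the window $n^{-1/\mu_l} \ll p \ll n^{-1/\mu_{l+1}}$, Theorem~\ref{thm:main}(ii) with index $j_l$ supplies an upper bound of $\bigl(\exx(n,T,\{F_1,\dotsc,F_{j_l}\}) + o(n^{v_T})\bigr)p^{e_T}$, and Theorem~\ref{thm:main}(i) with index $j_{l+1}$ supplies a matching lower bound of $\bigl(\exx(n,T,\{F_1,\dotsc,F_{j_{l+1}-1}\}) + o(n^{v_T})\bigr)p^{e_T}$; by the choice of the $j_l$ both are $(\pi_l + o(1))n^{v_T}p^{e_T}$. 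In the initial window $n^{-1/\mu_0} \ll p \ll n^{-1/\mu_1}$ the upper bound is instead the trivial $\ex(G(n,p),T,H) \le \N_T(G(n,p)) = (1+o(1))\N_T(K_n)p^{e_T}$ w.h.p., valid by a standard second-moment computation whenever $p \gg n^{-v_T/e_T}$ (here one uses that every $2$-balanced graph with at least two edges is also balanced). For the final window $p \gg n^{-1/\mu_t}$, the upper bound from Theorem~\ref{thm:main}(ii) with $i=k$ combined with the deterministic lower bound $\bigl(\ex(n,T,H)+o(n^{v_T})\bigr)p^{e_T}$ gives $(\pi_t + o(1))n^{v_T}p^{e_T}$.

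The boundary values $\mu_0 = e_T/v_T$, $\pi_0 = 1/|\Aut(T)|$, and $\pi_t = \lim_{n\to\infty}\ex(n,T,H)n^{-v_T}$ are built into the construction, and the bound $\mu_t \le e_T/(v_T-2+1/m_2(H))$ follows from $\mu_t \le m_T(F_k) \le m_T(F_{T,H}^e)$ together with the explicit evaluation of $m_T(F_{T,H}^e)$ recorded in the paper. The remaining technical point, which I view as the main obstacle, is to verify the strict chain $\mu_0 < \mu_1 < \dotsc < \mu_t$: the inequalities among $\mu_1,\dotsc,\mu_t$ require resolving any ties in $T$-density of the $F_{j_l}$'s by merging indices that cross a common threshold into a single transition, and the inequality $\mu_0 < \mu_1$ uses that every $T$-covering $F$ of $H$ of size at least two satisfies $m_T(F) > e_T/v_T$ outside of the degenerate situation where $H$ embeds into a vertex-disjoint union of copies of $T$, which must be handled as a separate case.
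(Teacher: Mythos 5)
Your plan follows the same route the paper sketches in the discussion after Question~\ref{question:main}: read off the corollary from Theorem~\ref{thm:main} by stitching together the matching upper/lower bounds at each pair of consecutive thresholds, and use the trivial upper bound $\N_T\big(G(n,p)\big)$ in the initial window. The random-restriction argument for the existence of $\lim_n \exx(n,T,\F)\,n^{-v_T}$ is correct and is the right tool for that step.

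There is, however, a genuine gap in the way you index the phase transitions, and you yourself flag it as the main obstacle. You define $j_l$ by where the sequence $\pi^{(0)}\ge\pi^{(1)}\ge\dotsb\ge\pi^{(k)}$ changes value and then set $\mu_l = m_T(F_{j_l})$. This can fail to give $\mu_l<\mu_{l+1}$: if $\pi^{(j_l)}>\pi^{(j_{l+1})}$ while $m_T(F_{j_l})=\dotsb=m_T(F_{j_{l+1}})$, then $\mu_l=\mu_{l+1}$, the window between them is empty, and $\pi_l$ is in fact never attained (for $p$ just above the common threshold, Theorem~\ref{thm:main}\ref{item:main-1-statement} with $i=j_{l+1}$ already forces the smaller value $\pi_{l+1}$). ``Merging indices that cross a common threshold'' is the right fix in spirit, but it is simpler to invert the order of operations, as the paper's discussion does implicitly: index by the set of $i$ with $p_{i+1}\neq p_i$ or $i=k$ (the last index of each block of equal thresholds), which makes the resulting $\mu$-sequence strictly increasing automatically, and \emph{then} take the distinct limiting values of $\exx\big(n,T,\{F_1,\dotsc,F_i\}\big)\,n^{-v_T}$ over that index set to get $\pi_0>\dotsb>\pi_t$.

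Two smaller points. First, you never establish that $t\ge 1$, i.e.\ that $\pi_0>\pi_t$. The paper notes that a standard averaging argument gives $\exx(n,T,\F)\le\N_T(K_n)-\Omega(n^{v_T})$ for every nonempty family $\F$ of $T$-coverings of $H$; this yields $\pi^{(1)}<\pi^{(0)}$, hence $j_1=1$ in your notation and $t\ge 1$. Second, for $\mu_0<\mu_1$ you invoke that $m_T(F)>e_T/v_T$ for every $T$-covering of size $\ge 2$ unless $H$ embeds in a vertex-disjoint union of copies of $T$. That degenerate situation is in fact already ruled out by the paper's standing (implicit) hypothesis that $H$ is not contained in a blow-up of $T$, since a vertex-disjoint union of $T$'s embeds into a blow-up of $T$; so it need not be treated as a separate case, but it is worth stating that the hypothesis is being used here, as it is not written into the formal statement of the corollary.
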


A rather disappointing feature of Corollary~\ref{cor:main} (and thus of Theorem~\ref{thm:main}) is that we are unable to determine whether or not there exists a pair of graphs $H$ and $T$ for which the typical value of $\ex\big(G(n,p), T, H\big)$ undergoes more than one `phase transition' (that is, the integer $t$ from the statement of the corollary is strictly greater than one). If one was allowed to replace $H$ with a finite family of forbidden graphs, then one can see an arbitrary (finite) number of `phase transitions' even in the case when $T = K_2$, see~\cite[Theorem~6.4]{MorSamSax}.

Even though we were able to construct pairs of $H$ and $T$ which admit $T$-coverings of $H$ whose $T$-density is strictly smaller than the $T$-density of the special covering of $H$ with $e_H$ copies of $T$, for no such $T$-covering $F$ we were able to show that $\exx(n, T, F) \ge \ex(n, T, H) + \Omega(n^{v_T})$. On the other hand, if one removes the various (important) assumptions on the densities of $H$, $T$, and $F$, then one can find such triples. A simple example is $H = K_7$, $T = K_3$, and $F$ being a decomposition of $K_7$ into edge-disjoint triangles (the Fano plane). Indeed, in this case $\exx(n, K_3, F) \ge (\frac{3}{4}-o(1))\binom{n}{3}$ as witnessed by the family of all triangles in $K_n$ that have at least one vertex in each of the parts of some partition of $V(K_n)$ into two sets of (almost) equal size (the Fano plane is not $2$-colorable). On the other hand, Theorem~\ref{thm:Erdos-Stone-T} implies that $\ex(n, K_3, K_7) \le (\frac{5}{9}+o(1))\binom{n}{3}$. We thus pose the following question.

\begin{question}
  \label{question:main}
  Do there exist pairs of graphs $H$ and $T$ such that $m_2(H) \le m_2(T)$, $T$ is $2$-balanced, and the family $\F$ of all $T$-coverings of $H$ that have the smallest $T$-density (among all $T$-coverings of $H$) satisfies $\exx(n, T, \F) \ge \ex(n, T, H) + \Omega(n^{v_T})$?
\end{question}

Let us point out that answering Question~\ref{question:main} is equivalent to determining whether or not there is a pair of graphs $H$ and $T$, where $T$ is $2$-balanced, for which $\ex\big( G(n,p), T, H \big)$ undergoes multiple `phase transitions' in the sense described above. Indeed, suppose that $H$ and $T$ are fixed graphs and assume that $T$ is $2$-balanced and that $m_2(H) \le m_2(T)$. Let $F_1, \dotsc, F_k$ be the $T$-resolution of $H$ and let $p_0, p_1, \dotsc, p_k$ be the associated threshold sequence. The numbers $\pi_0, \pi_1, \dotsc, \pi_t$ from the statement of Corollary~\ref{cor:main} are precisely all numbers $\pi$ satisfying
\[
  \pi = \lim_{n \to \infty} \exx(n, T, \{F_1, \dotsc, F_i\}) \cdot n^{-v_T}
\]
for some $i \in \{0, \dotsc, k\}$ such that either $p_{i+1} \neq p_i$ or $i = k$. Standard averaging arguments can be used to show that $\exx(n,T, \F) \le \N_T(K_n) - \Omega(n^{v_T})$ for every nonempty family $\F$ of $T$-coverings of $H$ whereas the aforementioned Lemma~\ref{lem:Fe-exnTH} yields
\[
  \ex(n, T, H) \le \exx(n, T, \{F_1, \dotsc, F_k\}) \le \exx(n, T, F_{T,H}^e) \le \ex(n, T, H) + o(n^2).
\]
Thus $t > 1$ is and only if there exists some $i \in \{1, \dotsc, k-1\}$ such that
\[
  p_{i+1} \gg p_i \qquad \text{and} \qquad \exx(n, T, \{F_1, \dotsc, F_i\}) \ge \ex(n, T, H) + \Omega(n^{v_T}).
\]
If the latter condition is satisfied, then it also holds when $i$ is the largest index such that $p_1 = p_i$. But then $\{F_1, \dotsc, F_i\}$ is precisely the family $\F$ defined in Question~\ref{question:main}.

\subsection{Structure of the paper}

The rest of the paper is structured as follows. In Section~\ref{sec:proof-outline}, we give a high level overview of the proofs of Theorems~\ref{thm:main-easy} and~\ref{thm:main}. In Section~\ref{sec:Tools}, we introduce the main tools, the hypergraph container lemma and Harris's and Janson's inequalities, and prove a few useful lemmas and corollaries concerning extremal and random graphs. In Section~\ref{sec:proof-main-thms}, we give the proofs of the main theorems, starting with the simpler Theorem~\ref{thm:main-easy} and then continuing to the more difficult Theorem~\ref{thm:main}.
Finally, in Section~\ref{sec:concl-remarks}, we give concluding remarks and offer open problems.

\section{Proof outline}

\label{sec:proof-outline}

Before diving into the details of the proofs of Theorems~\ref{thm:main-easy} and~\ref{thm:main}, let us briefly go over the main steps we take, highlighting the main ideas.

The proofs of the lower bounds on $\ex\big(G(n,p), T, H\big)$ are rather standard. Suppose that $G \sim G(n,p)$. In the setting of Theorem~\ref{thm:main-easy}, the $H$-free subgraph of $G$ with a large number of copies of $T$ is obtained by arbitrarily removing from $G$ one edge from every copy of (some subgraph of) $H$. In the setting of Theorem~\ref{thm:main}, we remove from $G$ all edges that are either (i)~not contained in a copy of $T$ that belongs to a fixed extremal $\{F_1, \dotsc, F_{i-1}\}$-free collection $\T \subseteq T(K_n)$, or (ii)~contained in a copy of $T$ that constitutes some $T$-covering of $H$ in $T(G) \cap \T$, or (iii)~contained in more than one copy of $T$ in $G$. Note that all copies of $H$ in $G$ are removed this way. Our assumption on $p$ guarantees that in steps (ii) and (iii) above we lose only a negligible proportion of $T(G)$.

The upper bound implicit in the equality $\ex\big(G(n,p), T, H\big) = \left(\N_T(K_n) + o\big(n^{v_T}\big)\right) p^{e_T}$ in Theorem~\ref{thm:main-easy} follows from a standard application of the second moment method; see, e.g., \cite{JLR-book}. The proofs of the remaining upper bounds, in both Theorems~\ref{thm:main-easy} and~\ref{thm:main}, utilize the method of hypergraph containers~\cite{BMS, ST}; see also~\cite{BMS-survey}. Roughly speaking, the hypergraph container theorems state that the family of all independent sets of a uniform hypergraph whose edges are distributed somewhat evenly can be covered by a relatively small family of subsets, called \emph{containers}, each of which is `almost independent' in the sense that it contains only a negligible proportion of the edges of the hypergraph.

In the setting of Theorem~\ref{thm:main-easy}, a standard application of the method yields a collection $\C$ of $\exp\big(O(n^{2-1/m_2(H)} \log n)\big)$ subgraphs of $K_n$ (the containers), each with merely $o(n^{v_H})$ copies of $H$, that cover the family of all $H$-free subgraphs of $K_n$. Suppose that $G \sim G(n,p)$ and let $G_0$ be an $H$-free subgraph of $G$ and note that $G_0$ has to be a subgraph of one of the containers. A standard supersaturation result states that each graph in $\C$ can have at most $\ex(n,T,H) + o(n^{v_H})$ copies of $H$. It follows that for each \emph{fixed} container $C \in \C$, the intersection of $G$ with $C$ can have no more than $\big(\ex(n,T,H) + o(n^{v_T})\big) p^{e_T}$ copies of $T$. At this point, one would normally take the union bound over all containers and conclude that w.h.p.\ the number of copies of $T$ in $G \cap C$ is small simultaneously for all $C \in \C$ and hence also $G_0$ has this property, as $G_0 \subseteq G \cap C$ for some $C \in \C$.

Unfortunately, we cannot afford to take such a union bound as the rate of the upper tail of the number of copies of $T$ in $G(n,p)$ is much too slow to allow this, see~\cite{JOR}. Luckily, the rate of the \emph{lower tail} of the number of copies of $T$ in $G(n,p)$ is sufficiently fast, see Lemma~\ref{lem:NumberOfCopiesOfT}, to allow a union bound over all containers. Therefore, what we do is first prove that w.h.p.\ $\N_T(G) = (1+o(1)) \N_T(K_n) p^{e_T}$ and then show that w.h.p.\ the number of copies of $T$ in $G$ that are not fully contained in $C$ is at least $\big(\N_T(K_n) - \ex(n,T,H) - o(n^{v_T}) \big)p^{e_T}$ simultaneously for all $C \in \C$. This implies that w.h.p.\ $\N_T(G_0) \le \max_{C \in \C} \N_T(G \cap C) \le \big(\ex(n,T,H) + o(n^{v_T})\big)p^{e_T}$.

In the setting of Theorem~\ref{thm:main}, instead of building containers for all possible graphs $G_0$, we build containers for all possible collections $T(G_0)$, exploiting the fact that $T(G_0)$ cannot contain any $T$-covering of $H$, as $G_0$ is $H$-free. More precisely, we work with hypergraphs $\HH_1, \dotsc, \HH_i$, each with vertex set $T(K_n)$, whose edges are copies of the $T$-coverings $F_1, \dotsc, F_i$, respectively. A version of the container theorem presented in Corollary~\ref{cor:Cont} provides us with a small collection $\C$ of subsets of $T(K_n)$ such that (i)~each $\{F_1, \dotsc, F_i\}$-free collection $\T \subseteq T(K_n)$ is contained in some member of $\C$ and (ii)~each $C \in \C$ has only $o(n^{v_{U(F_j)}})$ copies of $F_j$, for each $j \in [i]$, and thus (by a standard averaging argument) it comprises at most $\exx(n, T, \{F_1, \dotsc, F_i\}) + o(n^{v_T})$ copies of $T$. The key parameter $q$ from the statement of Corollary~\ref{cor:Cont}, which determines the size of $\C$, exactly matches our definitions of $m_T(F_1), \dotsc, m_T(F_i)$. Now, since the underlying graph of each $F_j$ contains $H$ as a subgraph and $G_0$ is $H$-free, $T(G_0)$ must be contained in some member of $\C$. The rest of the argument is similar to the proof of Theorem~\ref{thm:main-easy} -- we first bound the upper tail of $\N_T(G)$ and then the lower tail of $|T(G) \setminus C|$ for all $C \in \C$ simultaneously.

\section{Tools and Preliminary Results}
\label{sec:Tools}

\subsection{Hypergraph container lemma}
\label{sec:hypergraph-containers}

The first key ingredient in our proof is the following version of the hypergraph container lemma, proved by Balogh, Morris, and Samotij~\cite{BMS}. An essentially equivalent statement was obtained independently by Saxton and Thomason~\cite{ST}. We first introduce the relevant notions. Suppose that $\HH$ is a $k$-uniform hypergraph. For a set $B \subseteq V(\HH)$, we let $\deg_{\HH}(B) = \big|\{A\in E(\HH) : B \subseteq A\}\big|$ and for each $\ell \in [k]$, we let
\begin{equation*}
  \Delta_\ell(\HH) = \max\big\{\deg_{\HH}(B) : B \subseteq V(\HH), \, |B|=\ell\big\}.
\end{equation*}
We denote by $\cI(\HH)$ the collection of independent sets in $\HH$.

\begin{theorem}[\cite{BMS}]
  \label{thm:Cont}
  For every positive integer $k$ and all positive $K$ and $\eps$, there exists a positive constant $C$ such that the following holds. Let $\HH$ be a $k$-uniform hypergraph and assume that $q \in (0,1)$ satisfies
  \begin{equation}
    \label{eq:Delta-ell-ass}
    \Delta_\ell(\HH) \le K q^{\ell-1}\frac{e(\HH)}{v(\HH)} \qquad \text{for all $\ell \in [k]$}.
  \end{equation}
  There exist a family $\cS \subseteq \binom{V(\HH)}{\le Cqv(\HH)}$ and functions $f \colon \cS \to \cP(V(\HH))$ and $g \colon \cI(\HH) \to \cS$ such that:
  \begin{enumerate}[label=(\roman*),itemsep=3pt]
  \item
    \label{item:signature-container}
    For every $I \in \cI(\HH)$, $g(I) \subseteq I$ and $I \setminus g(I) \subseteq  f(g(I))$.
  \item
    \label{item:container-sparse}
    For every $S\in \mathcal{S}$, $e(\HH[f(S)]) \le \eps e(\HH)$.
  \item
    \label{item:signature-greedy}
    If $g(I)\subseteq I'$ and $g(I')\subseteq I$ for some $I,I'\in \cI(\HH)$, then $g(I)=g(I')$.
  \end{enumerate}
\end{theorem}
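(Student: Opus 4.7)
The plan is to prove Theorem~\ref{thm:Cont} by exhibiting an algorithmic construction of the maps $f$ and $g$. Given an independent set $I \in \cI(\HH)$, we produce a fingerprint $g(I) \subseteq I$ of size at most $Cqv(\HH)$ together with a container $f(g(I)) \supseteq I \setminus g(I)$ whose definition uses only the fingerprint as input, so that $f$ is well-defined on $\cS = \{g(I) : I \in \cI(\HH)\}$.

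The algorithm maintains a fingerprint $S$ (initially empty) and an ambient set $A \subseteq V(\HH)$ (initially $V(\HH)$), preserving the invariant $I \setminus S \subseteq A$. Fix a total order $\prec$ on $V(\HH)$ in advance. At each step, select the $\prec$-smallest vertex $v^* \in A$ of maximum \emph{score}, defined for instance as $\sum_{\ell=1}^{k} (2q)^{1-\ell} \bigl|\{e \in E(\HH) : v^* \in e \subseteq A \cup S,\ |e \cap S| = \ell-1\}\bigr|$, a weighted count of edges through $v^*$ that are close to being completed by $S$. If $v^* \in I$, append $v^*$ to $S$ and delete from $A$ every vertex $u$ whose link with $S \cup \{v^*\}$ in $\HH$ exceeds a threshold number of edges; such $u$ cannot lie in $I$ by independence, so the invariant is preserved. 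If $v^* \notin I$, simply remove $v^*$ from $A$; this is consistent with the invariant since $v^* \notin I \setminus S$. Halt as soon as either $|S| = \lceil Cqv(\HH) \rceil$ or the residual hypergraph $\HH[A]$ has at most $\eps e(\HH)$ edges.

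Define $f(S)$ to be the set $A$ produced by the same procedure run with $S$ as input, skipping the tests for membership in $I$: because every update to $A$ depends only on $S$ and $\HH$, this is unambiguous. Then $I \setminus g(I) \subseteq f(g(I))$ holds by construction, which is~\ref{item:signature-container}. Property~\ref{item:signature-greedy} follows from the fact that $g(I)$ is built by a canonical greedy rule — the identity of $v^*$ at each step is a deterministic function of $(S, A, \HH, \prec)$ — so if $g(I) \subseteq I'$ and $g(I') \subseteq I$, the two executions perform the same sequence of additions and therefore produce the same fingerprint.

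The quantitative heart of the proof is showing that the algorithm terminates with a sparse residual $\HH[A]$ after few fingerprint additions, which is where assumption~\eqref{eq:Delta-ell-ass} is essential: the bound $\Delta_\ell(\HH) \le Kq^{\ell-1} e(\HH)/v(\HH)$ for every $\ell \in [k]$ is exactly what ensures that each time a vertex is appended to $S$, the weighted edge count in $\HH[A]$ drops by a controlled multiplicative factor. Choosing $C = C(k, K, \eps)$ sufficiently large then forces $e(\HH[A]) \le \eps e(\HH)$ after at most $\lceil Cqv(\HH) \rceil$ additions, yielding~\ref{item:container-sparse}. The main obstacle is the delicate calibration of the score function and the deletion threshold so that (a) whenever $\HH[A]$ is still dense, the selected $v^*$ certifies many vertices for removal from $A$ upon being added to $S$, yet (b) the total budget of removals does not exceed $v(\HH)$, while (c) every co-degree level $\ell \in [k]$ is simultaneously controlled. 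Balancing these three effects across all $k$ levels is the technical crux of the argument.
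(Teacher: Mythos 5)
The paper does not reprove this theorem; it is cited from \cite{BMS}, and the only addition the authors make is the remark that property~\ref{item:signature-greedy} is implicit in the final claim of the proof of \cite[Theorem~2.2]{BMS}. Your sketch is in the algorithmic spirit of that proof, and your deduction of the consistency property~\ref{item:signature-greedy} from the determinism of the greedy rule is correct. There is, however, a concrete gap in the pruning step. You delete from $A$ ``every vertex $u$ whose link with $S \cup \{v^*\}$ in $\HH$ exceeds a threshold number of edges,'' justifying this by ``such $u$ cannot lie in $I$ by independence.'' This is false: independence of $I$ forbids only full edges of $\HH$ from lying inside $I$, and a vertex $u \in I$ may have arbitrarily many edges through itself and a partial piece of $S$ whose remaining vertices lie outside $I$. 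The only vertices $u$ certified to be outside $I$ are those for which some edge of $\HH$ lies entirely inside $\{u\} \cup S \cup \{v^*\}$, and there are far too few of these per step to drive the residual hypergraph down. So the claimed invariant $I \setminus S \subseteq A$ is not actually preserved by the rule as you state it.

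This is exactly where the \cite{BMS} argument uses a genuinely different device: when a vertex of $I$ is appended to the fingerprint, nothing is removed from the ambient set on independence grounds. Instead the link of that vertex is recorded as a $(k-1)$-uniform hypergraph on the surviving vertices, in which $I$ is again independent (an edge of the link lying inside $I$, together with the appended vertex, would give an edge of $\HH$ inside $I$), and the construction recurses through uniformity levels; deletion from the ambient set happens only at the $1$-uniform base and in the ``$v^* \notin I$'' branch. Collapsing this recursion into a single pass with a weighted score is a plausible idea, but it requires a different pruning mechanism and a carefully engineered potential-decrease argument --- the ``technical crux'' you flag but do not supply, and where essentially all of the work in \cite{BMS} resides.
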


Let us make two remarks here. First, condition~\ref{item:container-sparse} in the above statement is equivalent to the condition that the image of the function $f$ from the statement of \cite[Theorem~2.2]{BMS} is $\overline{\F}$, where $\F$ is the (increasing) family of all subsets of $V(\HH)$ that induce more than $\eps e(\HH)$ edges. Second, that the final assertion of the statement of Theorem~\ref{thm:Cont} is not present in the original statement of~\cite[Theorem~2.2]{BMS}. It is, however, proved in the final claim of the proof of~\cite[Theorem~2.2]{BMS}.

Since the hypergraphs we shall be working with in the proof of Theorem~\ref{thm:main} are not necessarily uniform, we shall be actually invoking the following (rather straightforward) corollary of Theorem~\ref{thm:Cont}.

\begin{corollary}
  \label{cor:Cont}
  For all positive integers $k_1, \dotsc, k_i$ and all positive $K$ and $\eps$, there exists a positive constant $C$ such that the following holds. Suppose that $\HH_1, \dotsc, \HH_i$ are hypergraphs with the same vertex set $V$ and that $\HH_j$ is $k_j$-uniform, for each $j \in [i]$. Assume that $q \in (0,1)$ is such that for all $j \in [i]$,
  \begin{equation}
    \label{eq:Delta-ell-ass-cor}
    \Delta_\ell(\HH_j) \le K q^{\ell-1}\frac{e(\HH_j)}{v(\HH_j)} \qquad \text{for all $\ell \in [k_j]$}.
  \end{equation}
  There exist a family $\cS \subseteq \binom{V}{\le Cq|V|}$ and functions $f \colon \cS \to \cP(V)$ and $g \colon \bigcap_{j=1}^i \cI(\HH_j) \to \cS$ such that:
  \begin{enumerate}[label=(\roman*),itemsep=3pt]
  \item
    For every $I \in \bigcap_{j=1}^i \cI(\HH_j)$, $g(I) \subseteq I$ and $I \setminus g(I) \subseteq  f(g(I))$.
  \item
    For every $S\in \mathcal{S}$, $e(\HH_j[f(S)]) \le \eps e(\HH_j)$ for every $j \in [i]$.
  \item
    If $g(I)\subseteq I'$ and $g(I')\subseteq I$ for some $I,I'\in \bigcap_{j=1}^i \cI(\HH_j)$, then $g(I)=g(I')$. 
  \end{enumerate}
\end{corollary}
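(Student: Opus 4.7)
The plan is to reduce Corollary~\ref{cor:Cont} to Theorem~\ref{thm:Cont} by applying the latter sequentially to $\HH_1, \dotsc, \HH_i$ and composing the resulting signatures. First I invoke Theorem~\ref{thm:Cont} separately for each $\HH_j$, with parameters $k_j$, $K$, and $\eps$; assumption~\eqref{eq:Delta-ell-ass-cor} supplies exactly what is needed, and we obtain a constant $C_j$, a family $\cS_j \subseteq \binom{V}{\le C_j q |V|}$, and functions $f_j, g_j$ satisfying the conclusions of that theorem. The constant for Corollary~\ref{cor:Cont} will be $C = C_1 + \dotsb + C_i$.

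Given $I \in \bigcap_{j=1}^i \cI(\HH_j)$, I build a signature recursively: set $I_0 = I$, and for $j = 1, \dotsc, i$ let $S_j = g_j(I_{j-1})$ and $I_j = I_{j-1} \setminus S_j$. The recursion is legal because $\cI(\HH_j)$ is closed under subsets and $I_{j-1} \subseteq I$. Property~\ref{item:signature-container} of Theorem~\ref{thm:Cont} applied to each $g_j$ yields $S_j \subseteq I_{j-1}$ and $I_j \subseteq f_j(S_j)$; in particular the $S_j$'s are pairwise disjoint and
\[
  I_i = I \setminus (S_1 \cup \dotsb \cup S_i) \subseteq \bigcap_{j=1}^i f_j(S_j).
\]
Define $g(I) = S_1 \cup \dotsb \cup S_i$, so $|g(I)| \le Cq|V|$, let $\cS$ be the image of $g$, and define (provisionally) $f(S_1 \cup \dotsb \cup S_i) = \bigcap_{j=1}^i f_j(S_j)$. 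Items~(i) and~(ii) of the corollary then follow at once: (i) from the construction, and (ii) from the inclusion $f(S) \subseteq f_j(S_j)$ combined with property~\ref{item:container-sparse} of Theorem~\ref{thm:Cont}, which gives $e(\HH_j[f(S)]) \le e(\HH_j[f_j(S_j)]) \le \eps e(\HH_j)$ for every $j$.

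The main obstacle is to show that $f$ is in fact well-defined on $\cS$ — that is, that the decomposition $(S_1,\dotsc,S_i)$ is determined by $g(I)$ — and simultaneously to verify the greedy property~(iii). I handle both with a single induction. Suppose $I, I' \in \bigcap_{j=1}^i \cI(\HH_j)$ satisfy $g(I) \subseteq I'$ and $g(I') \subseteq I$, and write $(S_1,\dotsc,S_i)$ and $(S_1',\dotsc,S_i')$ for the associated decompositions. I claim $S_j = S_j'$ for every $j$. For $j=1$, we have $S_1 = g_1(I) \subseteq g(I) \subseteq I'$ and $S_1' = g_1(I') \subseteq g(I') \subseteq I$, so property~\ref{item:signature-greedy} of $g_1$ applied to $I, I' \in \cI(\HH_1)$ gives $S_1 = S_1'$. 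For the inductive step, if $S_r = S_r'$ for all $r < j$, then $I_{j-1}$ and $I'_{j-1}$ arise by deleting the same set $S_1 \cup \dotsb \cup S_{j-1}$ from $I$ and $I'$ respectively; since $S_j \subseteq g(I) \subseteq I'$ and $S_j$ is disjoint from $S_1 \cup \dotsb \cup S_{j-1}$, we get $S_j \subseteq I'_{j-1}$, and symmetrically $S_j' \subseteq I_{j-1}$, whence property~\ref{item:signature-greedy} of $g_j$ (applied to $I_{j-1}, I'_{j-1} \in \cI(\HH_j)$) yields $S_j = S_j'$. Specializing to $I = I'$ shows that the decomposition of any $S \in \cS$ is unique, so $f$ is well-defined; the general statement delivers property~(iii).
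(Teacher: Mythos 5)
Your proof is correct, and it follows the same high-level strategy as the paper: apply Theorem~\ref{thm:Cont} once to each $\HH_j$ with the common parameter $q$, combine the signature maps $g_j$ into a single map $g$, take $f$ to be the intersection of the individual containers $f_j$, and use the consistency property~\ref{item:signature-greedy} of Theorem~\ref{thm:Cont} both to make $f$ well-defined on $g\bigl(\bigcap_j \cI(\HH_j)\bigr)$ and to establish property~(iii) of the corollary. The genuine difference lies in how the $g_j$'s are composed. The paper applies each $g_j$ to the \emph{original} set $I$ and defines $g(I) = g_1(I) \cup \dotsc \cup g_i(I)$; with this parallel definition, both well-definedness of $f$ and property~(iii) are immediate, because $g_j(I) \subseteq g(I) \subseteq I'$ and $g_j(I') \subseteq g(I') \subseteq I$ hold componentwise and one simply invokes Theorem~\ref{thm:Cont}\ref{item:signature-greedy} separately for each $j$. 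You instead peel the signature off sequentially, with $S_j = g_j(I_{j-1})$ and $I_j = I_{j-1} \setminus S_j$, which makes the pieces $S_1, \dotsc, S_i$ pairwise disjoint but causes each $S_j$ to depend on the earlier ones; recovering well-definedness and property~(iii) then requires the induction you carry out (correctly). The parallel version is cleaner, but both are valid and yield the same constant $C = C_1 + \dotsc + C_i$. One cosmetic slip in your last sentence: ``specializing to $I = I'$'' is vacuous; what you mean is specializing to the case $g(I) = g(I')$, which automatically satisfies the hypotheses of your claim since $g(I) \subseteq I$ and $g(I') \subseteq I'$ always hold, and that is what makes $f$ well-defined on $\cS$.
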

\begin{proof}
  For each $j \in [i]$, let $C_j$ be the constant given by Theorem~\ref{thm:Cont} with $k \leftarrow k_j$. Assume that $q \in (0,1)$ is such that the hypergraphs $\HH_1, \dotsc, \HH_i$ satisfy~\eqref{eq:Delta-ell-ass-cor}. For each $j \in [i]$, we may apply Theorem~\ref{thm:Cont} to the hypergraph $\HH_j$ to obtain a family $\cS_j \subseteq \binom{V}{\le C_j |V|}$ and functions $f_j \colon \cS_j \to \cP(V)$ and $g_j \colon \cI(\HH_j) \to \cS_j$ as in the assertion of the theorem.

  We now let $C = C_1 + \dotsc + C_i$ and define
  \[
    \cS = \big\{S_1 \cup \dotsc \cup S_i : S_j \in \cS_j \text{ for each } j \in [i]\big\} \subseteq \binom{V}{\le Cq|V|}
  \]
  and, given an $I \in \bigcap_{j=1}^i \cI(\HH_j)$,
  \[
    g(I) = g_1(I) \cup \dotsc \cup g_i(I).
  \]
  Suppose that $g(I) \subseteq I'$ and $g(I') \subseteq I$ for some $I, I' \in \bigcap_{j=1}^i \cI(\HH_j)$. Then also $g_j(I) \subseteq g(I) \subseteq I'$ and, similarly, $g_j(I') \subseteq g(I') \subseteq I$ for each $j \in [i]$. Assertion~\ref{item:signature-greedy} of Theorem~\ref{thm:Cont} implies that that $g_j(I) = g_j(I')$ for each $j$ and thus $g(I) = g(I')$. Since $g(I) \subseteq I$, we may also conclude that if $g(I) = g(I')$, then also $g_j(I) = g_j(I')$ for each $j \in [i]$. In particular, we may define, for each $I \in \bigcap_{j=1}^i \cI(\HH_j)$,
  \[
    f(g(I)) = f_1(g_1(I)) \cap \dotsc \cap f_i(g_i(I)).
  \]
  It is routine to verify that $I \setminus g(I) \subseteq f(g(I))$ and that $e\big(\HH_j[f(g(I))]\big) \le \eps e(\HH_j)$ for every $j \in [i]$.
\end{proof}

\subsection{Supersaturation results}
\label{sec:supersaturation-results}

The following two statements can be proved using a standard averaging argument in the spirit of the classical supersaturation theorem of Erd\H{o}s and Simonovits~\cite{ES}.

\begin{lemma}
  \label{lem:satLem-easy}
  Given graphs $H$ and $T$ and a $\delta > 0$, there exists an $\eps > 0$ such that the following holds. Every $n$-vertex graph $G$ with $\N_T(G) \ge \ex(n, T, H) + \delta n^{v_T}$ contains more than $\eps n^{v_H}$ copies of $H$.
\end{lemma}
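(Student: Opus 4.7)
The plan is to use a standard averaging/supersaturation argument: fix a large constant $s = s(T,H,\delta)$, sample a uniformly random $s$-subset $S \subseteq V(G)$, and show that with constant probability $G[S]$ contains strictly more than $\ex(s,T,H)$ copies of $T$, which by definition forces a copy of $H$ inside $G[S]$. Counting pairs $(S,H')$ where $H' \subseteq G[S]$ is a copy of $H$, and using that each copy of $H$ in $G$ is contained in at most $\binom{n-v_H}{s-v_H}$ of the $s$-subsets, then yields $\N_H(G) = \Omega(n^{v_H})$.

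The first technical ingredient I need is to compare $\ex(s,T,H)$ to $\ex(n,T,H)$. The key observation is that the normalized Tur\'an number $\ex(n,T,H)/\binom{n}{v_T}$ is monotonically nonincreasing in $n$: indeed, if $G^*$ is an extremal $H$-free graph on $n$ vertices, averaging $\N_T(G^*[S])$ over all $s$-subsets $S \subseteq V(G^*)$ gives
\[
  \ex(s,T,H) \ge \N_T(G^*)\cdot\frac{\binom{s}{v_T}}{\binom{n}{v_T}} = \ex(n,T,H)\cdot\frac{\binom{s}{v_T}}{\binom{n}{v_T}}.
\]
In particular, the limit $c := \lim_{n\to\infty} \ex(n,T,H)/\binom{n}{v_T}$ exists (and is finite since $\ex(n,T,H) \le \binom{n}{v_T}\cdot v_T!$), and one has $\ex(n,T,H) \ge c\binom{n}{v_T}$ for every $n$. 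I would now pick $s = s(T,H,\delta)$ large enough that $\ex(s,T,H) \le (c+\delta/4)\binom{s}{v_T}$.

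With this choice of $s$ in place, the assumption $\N_T(G) \ge \ex(n,T,H) + \delta n^{v_T} \ge c\binom{n}{v_T} + \delta n^{v_T}$ gives, for all sufficiently large $n$,
\[
  \E[\N_T(G[S])] = \N_T(G)\cdot\frac{\binom{s}{v_T}}{\binom{n}{v_T}} \ge c\binom{s}{v_T} + \tfrac{\delta}{2} s^{v_T} \ge \ex(s,T,H) + \tfrac{\delta}{4} s^{v_T}.
\]
Since $\N_T(G[S]) \le s^{v_T}$ deterministically, an elementary averaging argument (comparing the contributions of $S$'s with $\N_T(G[S]) \le \ex(s,T,H)$ and of the rest) gives $\Pr[\N_T(G[S]) > \ex(s,T,H)] \ge \delta/(4)$. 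Each such $S$ contains at least one copy of $H$ in $G[S]$ by definition of $\ex(s,T,H)$, so
\[
  \N_H(G)\cdot\binom{n-v_H}{s-v_H} \ge \tfrac{\delta}{4}\binom{n}{s},
\]
and rearranging yields $\N_H(G) \ge (\delta/4)\binom{n}{v_H}/\binom{s}{v_H} \ge \eps n^{v_H}$ for a suitable $\eps = \eps(T,H,\delta) > 0$; the finitely many small values of $n$ can be absorbed by shrinking $\eps$ further (for such $n$ the inequality is either vacuous or witnessed by a single copy of $H$).

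The only place where one could get stuck is the first step, the monotonicity of $\ex(n,T,H)/\binom{n}{v_T}$, since without it we have no a priori bridge between the extremal number on $n$ vertices and on a constant-sized subset. Once this is in hand, everything else is routine double counting, exactly mirroring the classical Erd\H{o}s--Simonovits supersaturation proof for $\ex(n,H)$.
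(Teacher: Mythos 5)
Your proof is correct and is exactly the standard Erd\H{o}s--Simonovits supersaturation argument that the paper invokes without spelling out; the averaging over induced $s$-vertex subgraphs of an extremal graph to establish that $\ex(n,T,H)/\binom{n}{v_T}$ is nonincreasing is precisely the right bridge between the Tur\'an number on $n$ vertices and on a bounded number of vertices. The numerical constants in your intermediate inequalities are a little loose but all hold once $s$ is taken large enough relative to $v_T$, and since $\eps$ may depend on $T$, $H$, and $\delta$, this is harmless.
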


\begin{lemma}
  \label{lem:satLem}
  Given graphs $H$ and $T$, a (finite) family $\F$ of $T$-coverings of $H$, and a $\delta > 0$, there exists an $\eps > 0$ such that the following holds. For every collection $\T \subseteq T(K_n)$ with $|\T| \ge \exx(n, T, \F) + \delta n^{v_T}$, there exists an $F \in \F$ such that $\T$ contains more than $\eps n^{v_{U(F)}}$ copies of $F$.
\end{lemma}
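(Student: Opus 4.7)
My plan follows the classical Erd\H{o}s--Simonovits supersaturation template: sample a uniformly random vertex subset of size $m$, apply the definition of $\exx(m, T, \F)$ on that subset, and conclude via an averaging/double-counting argument over all $m$-subsets of $V(K_n)$.

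The first step is to establish that the normalized extremal density $\alpha_n := \exx(n, T, \F) / \binom{n}{v_T}$ is non-increasing in $n$. Given an $\F$-free extremal collection $\T^* \subseteq T(K_n)$ and a uniformly random vertex $v \in V(K_n)$, the sub-collection of copies $T' \in \T^*$ that avoid $v$ is still $\F$-free and has expected size $\exx(n,T,\F) \cdot (n - v_T)/n$. Hence $\exx(n-1,T,\F) \geq \exx(n,T,\F) \cdot \binom{n-1}{v_T} / \binom{n}{v_T}$, which rearranges to $\alpha_{n-1} \geq \alpha_n$. Consequently $\alpha_n$ converges to some limit $\alpha_\infty$, and for every constant $\eta > 0$ we can choose a constant $m = m(\eta, T)$ such that $\alpha_m - \alpha_\infty < \eta$, and therefore $\alpha_m - \alpha_n < \eta$ for all $n \geq m$.

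Fix such an $m$ with $\eta$ small compared to $\delta$, set $V := \max_{F \in \F} v_{U(F)}$, and take $\eps$ to be a small constant (of order $\delta / (|\F| \cdot m^{V - v_T})$). Assume toward a contradiction that $\T \subseteq T(K_n)$ satisfies $|\T| \geq \exx(n,T,\F) + \delta n^{v_T}$ yet contains at most $\eps n^{v_{U(F)}}$ copies of each $F \in \F$, with $n$ taken large. For a uniformly random $m$-subset $S \subseteq V(K_n)$, let $\T_S$ be the copies $T' \in \T$ with $V(T') \subseteq S$. A direct computation gives $\E[|\T_S|] = |\T| \cdot \binom{m}{v_T}/\binom{n}{v_T}$, and writing $|\T| = \exx(n,T,\F) + (|\T| - \exx(n,T,\F))$ together with the monotonicity from step one yields
\[
  \E[|\T_S|] \geq \exx(m, T, \F) - (\alpha_m - \alpha_n)\binom{m}{v_T} + \delta n^{v_T} \cdot \binom{m}{v_T}/\binom{n}{v_T}.
\]
For the chosen $m$ and $n$ sufficiently large, the middle term is negligible compared to the last one, so $\E[|\T_S|] \geq \exx(m, T, \F) + c\, \delta m^{v_T}$ for some positive constant $c$ depending only on $T$.

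To finish, I would use the fact that removing from $\T_S$ one copy of $T$ per copy of each $F \in \F$ inside $S$ produces an $\F$-free sub-collection of $T(K_n[S])$, whence $|\T_S| \leq \exx(m, T, \F) + \sum_{F \in \F} N_F(\T_S)$, where $N_F(\T_S)$ counts copies of $F$ inside $S$ all of whose $T$-components lie in $\T$. Taking expectations and using the inclusion probability $\binom{m}{v_{U(F)}}/\binom{n}{v_{U(F)}} = (1+o(1))(m/n)^{v_{U(F)}}$ together with the assumed upper bound $\eps n^{v_{U(F)}}$ on copies of $F$ in $\T$, one gets $\E\bigl[\sum_{F} N_F(\T_S)\bigr] \leq 2\eps |\F| m^V$ for $n$ large. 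Combining with the previous lower bound forces $2\eps |\F| m^V \geq c\, \delta m^{v_T}$, contradicting the choice of $\eps$. The main obstacle is purely bookkeeping: selecting the parameters $m$, then $\eps$, in the right order so that both the monotonicity defect $(\alpha_m - \alpha_n)\binom{m}{v_T}$ and the $1+o(1)$ correction factors in the inclusion probabilities remain dominated by the slack $\delta m^{v_T}$ coming from the hypothesis $|\T| \geq \exx(n, T, \F) + \delta n^{v_T}$.
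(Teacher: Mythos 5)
The paper gives no proof of this lemma, only remarking that it follows from a standard averaging argument in the spirit of Erd\H{o}s--Simonovits supersaturation; your proposal is exactly that argument, executed correctly, including the vertex-deletion step showing $\exx(n,T,\F)/\binom{n}{v_T}$ is non-increasing, the random $m$-subset sampling, and the deletion-per-$F$-copy bound $|\T_S| \le \exx(m,T,\F) + \sum_{F} N_F(\T_S)$. The one omission — the finitely many small $n$ where the asymptotic estimates haven't kicked in — is handled trivially by noting that $|\T| > \exx(n,T,\F)$ already forces at least one copy of some $F \in \F$, and then shrinking $\eps$ below $n_0^{-\max_{F} v_{U(F)}}$.
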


Our final lemma states that the extremal function $\exx(n, T, \F)$ corresponding to a family $\F$ of $T$-coverings of $H$ can be approximated by $\ex(n, T, H)$ at least when $\F$ contains the special $T$-covering $F_{T,H}^e$ of $H$ with $e_H$ copies of $T$.

\begin{lemma}
  \label{lem:Fe-exnTH}
  Given graphs $H$ and $T$, let $F^e = F_{T,H}^e$ be the $T$-covering of $H$ with $e_H$ copies of $T$ defined in Section~\ref{sec:notat-defin}. Then
  \[
    \exx(n, T, F^e)=\ex(n,T,H) +o(n^{v_T}).
  \]
\end{lemma}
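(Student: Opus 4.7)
The plan is to prove $\exx(n,T,F^e)\ge\ex(n,T,H)$ and $\exx(n,T,F^e)\le\ex(n,T,H)+o(n^{v_T})$ separately. For the lower bound, I would take an $H$-free $n$-vertex graph $G_0$ achieving $\N_T(G_0)=\ex(n,T,H)$ and observe that $\T:=T(G_0)$ is $F^e$-free: any copy of $F^e$ inside $\T$ is a subgraph of $G_0$, and because $F^e$ is a $T$-covering of $H$ its underlying graph contains $H$, contradicting the $H$-freeness of $G_0$.

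The upper bound is the substantive direction. Fix an $F^e$-free collection $\T\subseteq T(K_n)$ and a parameter $\alpha\in(0,1/e_H)$. For each pair $e\in\binom{V(K_n)}{2}$ set $d_\T(e)=|\{T'\in\T:e\in T'\}|$, call $e$ \emph{heavy} when $d_\T(e)\ge n^{v_T-2-\alpha}$, and let $G^*\subseteq K_n$ be the subgraph consisting of the heavy pairs. Every $T'\in\T\setminus T(G^*)$ uses at least one non-heavy edge, so
\[
  |\T\setminus T(G^*)|\le\sum_{e\text{ non-heavy}}d_\T(e)\le\binom{n}{2}\cdot n^{v_T-2-\alpha}=O(n^{v_T-\alpha}).
\]
If I can show that $G^*$ is $H$-free for $n$ large, then $\N_T(G^*)\le\ex(n,T,H)$, and together with the displayed estimate this gives $|\T|\le\ex(n,T,H)+O(n^{v_T-\alpha})$, which is the required bound.

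To show $G^*$ is $H$-free I would argue by contradiction and assume that some copy $H^*$ of $H$ sits inside $G^*$. For every $e\in E(H^*)$ there are at least $n^{v_T-2-\alpha}$ choices of $T_e\in\T$ with $e\in T_e$, so the number of tuples $(T_e)_{e\in E(H^*)}$ meeting these constraints is at least $n^{e_H(v_T-2-\alpha)}$. Such a tuple fails to assemble into a copy of $F^e$ only in one of two ways: (i)~some $T_e$ contains an edge of $H^*$ other than $e$, or (ii)~two of the $T_e$'s share a vertex outside $V(H^*)$. Since $K_n$ contains only $O(n^{v_T-3})$ copies of $T$ through any two prescribed edges, and likewise only $O(n^{v_T-3})$ copies of $T$ through a prescribed edge together with one additional prescribed vertex, combining these with the trivial bound $d_\T(e)\le O(n^{v_T-2})$ on the remaining coordinates yields $O(n^{e_H(v_T-2)-1})$ bad tuples of either type. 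Because $\alpha<1/e_H$ this is $o(n^{e_H(v_T-2-\alpha)})$, so a good tuple must exist; it produces a copy of $F^e$ inside $\T$ and the desired contradiction.

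The main obstacle is the calibration of $\alpha$: the heaviness threshold $n^{v_T-2-\alpha}$ needs to be low enough that each of the $e_H$ slots $T_e$ has a large reservoir of candidates, so that the many copies of $H$ in $G^*$ generate vastly more good than bad tuples, yet high enough that the contribution of non-heavy edges to $|\T|$ is of lower order than $n^{v_T}$. Any $\alpha\in(0,1/e_H)$ achieves both requirements at once, and that is what makes the argument go through.
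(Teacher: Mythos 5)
Your argument is essentially the same as the paper's: you prune $\T$ down to the copies of $T$ using only ``heavy'' edges, observe the heavy edges form an $H$-free graph, and establish $H$-freeness by a counting argument that finds a copy of $F^e$ whenever a copy of $H$ appears. (The paper sets the heaviness threshold at $\eps n^{v_T-2}$ and builds $F^e$ greedily, while you use a polynomial threshold $n^{v_T-2-\alpha}$ and count good versus bad tuples; these are cosmetic variants of the same idea.)

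There is, however, a small but genuine gap in the list of ``bad'' events. Your condition (i), that some $T_e$ contains an \emph{edge} of $H^*$ other than $e$, is not strong enough. The $T$-covering $F^e_{T,H}$ requires each $T_i$ to meet $V(H)$ in exactly the two endpoints of its designated edge (its underlying graph has exactly $v_H + e_H(v_T-2)$ vertices). A tuple in which some $T_e$ passes through a vertex of $V(H^*)$ that is not an endpoint of $e$ but contains no further edge of $H^*$ escapes both (i) and (ii), yet does not assemble into a copy of $F^e$. The fix is to replace (i) by: some $T_e$ contains a \emph{vertex} of $H^*$ other than the two endpoints of $e$. The count of such tuples is still $O(n^{v_T-3})\cdot O(n^{v_T-2})^{e_H-1}=O(n^{e_H(v_T-2)-1})$, since fixing an edge and one extra vertex pins down three vertices of $T_e$, so your calibration $\alpha<1/e_H$ remains exactly what is needed and the rest of the argument goes through unchanged.
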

\begin{proof}
  Since the underlying graph of $F^e$ contains a copy of $H$, then $\exx(n, T, F^e) \ge \N_T(G)$ for every $H$-free graph $G$. This shows that $\exx(n, T, F^e) \ge \ex(n,T,H)$. For the opposite inequality, fix an arbitrary $\eps > 0$ and suppose that $\T$ is a collection of $\ex(n,T,H) + \eps n^{v_T}$ copies of $T$ in $K_n$. Let $E$ be the set of all edges of $K_n$ that belong to fewer than $\eps n^{v_T-2}$ copies of $T$ from $\T$ and let $\T'$ comprise only those copies of $T$ from $\T$ that contain no edge from $E$. Observe that
  \[
    |\T'| \ge |\T| - |E| \cdot \eps n^{v_T-2} \ge |\T| - \binom{n}{2} \cdot \eps n^{v_T-2} > \ex(n,T,H).
  \]
  Let $G \subseteq K_n$ be the union of all copies of $T$ in $\T'$. Since $\N_T(G) \ge |\T'| > \ex(n,T, H)$, the graph $G$ contains a copy of $H$. As each edge of $G$ is contained in at least $\eps n^{v_T-2}$ copies of $T$ from $\T$, each copy of $H$ in $G$ must be covered by a copy of $F^e$ that is contained in $\T$. Indeed, given a copy of $H$ in $G$, one may construct such an $F^e$ greedily by considering the edges of $H$ ordered arbitrarily as $f_1, \dotsc, f_{e_H}$ and then finding some $T_i \in \T$ that contains $f_i$ and whose remaining $v_T-2$ vertices lie outside of $V(T_1) \cup \dotsc \cup V(T_{i-1})$, for each $i \in \{1, \dotsc, e_H\}$ in turn. One is guaranteed to find such a $T_i$ since the number of copies of $T$ in $T(K_n)$ that contain $f_i$ and have at least one more vertex in $V(T_1) \cup \dotsc \cup V(T_{i-1})$ is only $O(n^{v_T-3})$.
\end{proof}

\subsection{Properties of graph densities}

Here, we establish several useful facts relating the three notions of graph density that we consider in this work -- the density, the $2$-density, and the $T$-density. Our first lemma partially explains why the two cases $m_2(H) \le m_2(T)$ and $m_2(H) > m_2(T)$, which we consider separately while studying the typical value of $\ex\big(G(n,p), T, H\big)$, are so different.

\label{sec:graph-densities}

\begin{lemma}
  \label{lem:mTH-m2T}
  Suppose that $H$ and $T$ are fixed graphs and assume that $T$ is $2$-balanced.
  \begin{enumerate}[label=(\roman*),itemsep=3pt]
  \item
    \label{item:mH-leq-mT}
    If $m_2(H) \le m_2(T)$, then the $T$-covering $F^e = F_{T,H}^e$ of $H$ with $e_H$ edges satisfies $m_T(F^e) \le m_2(T)$.
  \item
    \label{item:mH-gt-mT}
    If $m_2(H) > m_2(T)$, then every $T$-covering $F$ of $H$ satisfies $m_T(F) > m_2(T)$.
  \end{enumerate}
\end{lemma}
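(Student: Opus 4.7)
My plan is to handle both parts by observing that, thanks to the edge-disjointness of the copies in any $T$-covering and the $2$-balancedness of $T$, the inequality $(e_{U(F')} - e_T)/(v_{U(F')} - v_T) \lessgtr m_2(T)$ simplifies considerably. Indeed, if $|F'| = k'$, then $e_{U(F')} = k' e_T$, and since $m_2(T) = (e_T-1)/(v_T-2)$, the comparison becomes $(k'-1)(v_T - 2) \lessgtr (e_T - 1)(v_{U(F')} - v_T)/e_T \cdot$(adjustment), which, after clearing denominators, is the cleanest thing to work with. Both parts then reduce to controlling $v_{U(F')}$ in terms of the vertex count of an appropriately chosen subgraph of $H$.

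For part (i), the special structure of $F^e = F_{T,H}^e$ makes everything explicit: each subfamily $F' \subseteq F^e$ of size $k' \ge 2$ corresponds to a set of $k'$ edges of $H$, forming a subgraph $H' \subseteq H$ with $e_{H'} = k'$, and since the copies in $F^e$ are pairwise vertex-disjoint outside $V(H)$, one has $v_{U(F')} = v_{H'} + k'(v_T - 2)$ and $e_{U(F')} = k'e_T$. A straightforward cross-multiplication then shows that the inequality $(e_{U(F')} - e_T)/(v_{U(F')} - v_T) \le m_2(T)$ is equivalent to $(k'-1)/(v_{H'}-2) \le m_2(T)$, and the latter follows at once from the definition of $m_2(H)$ together with the hypothesis $m_2(H) \le m_2(T)$.

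For part (ii), I would first fix a subgraph $H' \subseteq H$ with $(e_{H'}-1)/(v_{H'}-2) = m_2(H)$ and a specific embedding of $H'$ inside $U(F)$. Since $m_2(H') \ge m_2(H) > m_2(T)$, the graph $H'$ cannot be a subgraph of $T$ (otherwise $(e_{H'}-1)/(v_{H'}-2)$ would be one of the ratios in the definition of $m_2(T)$), so no single copy in $F$ contains this embedded $H'$. Let $F' \subseteq F$ consist of those $T_i$ that share at least one edge with $H'$; by edge-disjointness the edges of $H'$ are partitioned among $F'$, with $T_i$ receiving $a_i \ge 1$ of them and $k' := |F'| \ge 2$, $\sum_i a_i = e_{H'}$.

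The main obstacle will be to prove a sharp upper bound on $v_{U(F')}$ that goes beyond the trivial $v_{U(F')} \le v_{H'} + k'(v_T-2)$ (which alone only uses $k' \le e_{H'}$ and is insufficient when some $a_i \ge 2$). The key observation is that the subgraph $X_i \subseteq H' \cap T_i$ of the $a_i$ edges is itself a subgraph of $T$, and hence $m_2(X_i) \le m_2(T)$, which yields $v_{X_i} \ge 2 + (a_i-1)/m_2(T)$ uniformly in $i$ (the case $a_i=1$ being trivial). Combined with $V(H') \subseteq V(U(F'))$, this gives
\[
  v_{U(F')} - v_{H'} \;\le\; k'v_T - \sum_{i} v_{X_i} \;\le\; k'(v_T-2) - \frac{e_{H'}-k'}{m_2(T)}.
\]
Substituting this bound into the inequality $(k'-1)e_T(v_T-2) > (e_T-1)(v_{U(F')} - v_T)$ and simplifying (using $1/m_2(T) = (v_T-2)/(e_T-1)$), the contributions of $k'$ cancel cleanly and the whole thing collapses to $(e_{H'}-1)(v_T-2) > (e_T-1)(v_{H'}-2)$, i.e.\ $m_2(H) > m_2(T)$, which is precisely our hypothesis. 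The delicate point to verify is that this vertex-saving estimate is exactly tight enough to absorb the loss from $k' < e_{H'}$; this tightness is the reason the $2$-balancedness of $T$ is essential.
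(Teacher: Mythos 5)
Your proof is correct and follows essentially the same route as the paper: part~(i) reduces via the explicit vertex and edge counts of $U(F')$ to comparing $m_2(H')$ with $m_2(T)$, and part~(ii) fixes a dense subgraph $H' \subseteq H$, restricts to the copies of $T$ in $F$ meeting it, and uses $m_2(T_i \cap H') \le m_2(T)$ together with the $2$-balancedness of $T$ to make the $k'$-dependence cancel. Your explicit treatment of the bound on $v_{U(F')}$ as an inequality, accounting for the possibility that distinct copies of $T$ share vertices outside $V(H')$, is a small but worthwhile refinement over the equality displayed in the paper's argument.
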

\begin{proof}
  To see~\ref{item:mH-leq-mT}, assume that $m_2(H) \le m_2(T)$ and fix some $F' \subseteq F^e$. Since $F'$ is a $T$-covering of some subgraph $H' \subseteq H$ with $|F'|$ edges by pairwise edge-disjoint copies of $T$, then
  \begin{multline*}
    \frac{e_{U(F')} - e_T}{v_{U(F')} - v_T} = \frac{e_{H'} e_T - e_T}{v_{H'} + e_{H'} (v_T-2) - v_T} = \frac{(e_{H'}-1)(e_T-1) + e_{H'}-1}{(e_{H'}-1)(v_T-2) + v_{H'}-2} \\
    \le \max\left\{\frac{e_T-1}{v_T-2}, \frac{e_{H'}-1}{v_{H'}-2}\right\} \le \max\{m_2(T), m_2(H)\} = m_2(T).
  \end{multline*}
  To see~\ref{item:mH-gt-mT}, assume that $m_2(H) > m_2(T)$ and let $F$ be an arbitrary $T$-covering of $H$. Let $H' \subseteq H$ be any subgraph of $H$ satisfying $\frac{e_{H'}-1}{v_{H'}-2} > \frac{e_T-1}{v_T-2}$ and denote by $T_1, \dotsc, T_k$ all those elements of $F$ that intersect $H'$. For each $i \in [k]$, denote by $v_i$ and $e_i$ the numbers of vertices and edges of $T_i \cap H'$, respectively, and note that $e_i-1 \le m_2(T) (v_i-2)$. One easily verifies that
  \begin{multline*}
    m_T(F) \ge \frac{e_{U(F')} - e_T}{v_{U(F')} - v_T} = \frac{e_{H'} + \sum_{i=1}^k (e_T-e_i) - e_T}{v_{H'} + \sum_{i=1}^k (v_T - v_i) - v_T } \\
    = \frac{e_{H'} - 1 + (k-1)(e_T-1) - \sum_{i=1}^k(e_i-1)}{v_{H'}-2 + (k-1)(v_T-2) - \sum_{i=1}^k (v_i-2)} > \frac{e_T-1}{v_T-2} = m_2(T),
  \end{multline*}
  as claimed.
\end{proof}

Our next lemma computes the rate of the lower tail of the number of copies of a $2$-balanced graph $T$ in $G(n,p)$, which Lemma~\ref{lem:NumberOfCopiesOfT}, stated below, provides in a somewhat implicit form.
\begin{lemma}
  \label{lemma:Psi-T}
  If $T$ is a $2$-balanced graph, then
  \[
    \min\left\{n^{v(T')} p^{e(T')} : \emptyset \neq T' \subseteq T \right\} =
    \begin{cases}
      n^{v_T} p^{e_T} & \text{if $p \le n^{-1/m_2(T)}$}, \\
      n^2p & \text{if $p \ge n^{-1/m_2(T)}$}.
    \end{cases}
  \]
\end{lemma}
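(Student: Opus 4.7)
My plan is a direct computation that extracts everything from the $2$-balanced hypothesis via a single pivotal inequality. The first step would be to establish that for every $T' \subseteq T$ with $e(T') \ge 1$,
\[
  e_T - e(T') \ge m_2(T)\bigl(v_T - v(T')\bigr).
\]
Indeed, the $2$-balanced assumption says $e_T - 1 = m_2(T)(v_T - 2)$, and the definition of $m_2(T)$ supplies $e(T') - 1 \le m_2(T)(v(T') - 2)$ for every $T' \subseteq T$ with $e(T') \ge 2$; subtracting the second from the first gives the claim. The edge case $e(T') = 1$ reduces to $0 \le m_2(T)(v(T')-2)$, which is immediate from $v(T') \ge 2$.

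Next, in the regime $p \le n^{-1/m_2(T)}$ I would show that the minimum is attained at $T' = T$. Taking logarithms, the inequality $n^{v(T')} p^{e(T')} \ge n^{v_T} p^{e_T}$ is equivalent to
\[
  (e_T - e(T')) \log(1/p) \ge (v_T - v(T')) \log n,
\]
and this follows by substituting the pivotal inequality on the left (valid because $v_T - v(T') \ge 0$ and $\log(1/p) \ge 0$) and using the hypothesis $\log(1/p) \ge \log n / m_2(T)$. Symmetrically, in the regime $p \ge n^{-1/m_2(T)}$ I would show that the minimum is attained at $T' = K_2$, comparing with $n^2 p$: the required inequality $(v(T') - 2) \log n \ge (e(T') - 1) \log(1/p)$ holds trivially for $e(T') = 1$ (the right side is zero and $v(T') \ge 2$), and for $e(T') \ge 2$ it follows by combining $(v(T') - 2)/(e(T') - 1) \ge 1/m_2(T)$, a direct consequence of the definition of $m_2(T)$, with $\log(1/p) \le \log n / m_2(T)$. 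Since both $T' = T$ and $T' = K_2$ are themselves admissible subgraphs, each candidate is actually attained, so it equals the minimum in its regime.

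There is no genuine obstacle: the content of the lemma is the geometric observation that the $2$-balanced hypothesis places every admissible pair $(v(T'), e(T'))$ on one side of the line through $(2, 1)$ and $(v_T, e_T)$ in $\mathbb{R}^2$, so the linear objective $v \log n + e \log p$ on this point set attains its minimum at one of the two extreme pairs, with the switch-over occurring precisely at $p = n^{-1/m_2(T)}$. The only item demanding any care is separating the corner case $e(T') = 1$, for which the defining inequality of $m_2(T)$ is vacuous; the trivial bound $v(T') \ge 2$ handles it.
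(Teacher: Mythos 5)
Your proof is correct and takes essentially the same route as the paper: both establish the inequality $e_T - e(T') \ge m_2(T)(v_T - v(T'))$ from $2$-balancedness and the definition of $m_2(T)$, then compare $n^{v(T')}p^{e(T')}$ against $n^{v_T}p^{e_T}$ (resp.\ $n^2p$) by multiplying through by the appropriate power of $p$ (you pass to logarithms, the paper manipulates the exponentials directly, but the algebra is the same). Your treatment is slightly more careful than the paper's in separating out the $e(T')=1$ corner case, and the closing geometric remark about the linear objective over a point set cut off by the line through $(2,1)$ and $(v_T,e_T)$ is a nice conceptual gloss, but it is the same argument.
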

\begin{proof}
  Let $T$ be a $2$-balanced graph. Suppose first that $p \ge n^{-1/m_2(T)}$ and fix some $T' \subseteq T$ with at least two edges. Since $T$ is $2$-balanced, then $m_2(T) \ge \frac{e_{T'}-1}{v_{T'}-2}$ and hence $p \ge n^{-\frac{v_{T'}-2}{e_{T'}-1}}$. It follows that
  \[
    n^{v_{T'}} p^{e_{T'}} = n^2 p \cdot n^{v_{T'}-2} p^{e_{T'}-1} \ge n^2p \cdot n^{v_{T'}-2} \left(n^{-\frac{v_{T'}-2}{e_{T'}-1}}\right)^{e_{T'}-1} = n^2p.
  \]
  Suppose now that $p \le n^{-1/m_2(T)}$ and fix a nonempty $T' \subseteq T$. Since $m_2(T) \ge \frac{e_{T'}-1}{v_{T'}-2}$, then
  \[
    \frac{e_T-e_{T'}}{m_2(T)} = \frac{(e_T-1)-(e_{T'}-1)}{m_2(T)} \ge (v_T-2) - (v_{T'}-2) = v_T-v_{T'}.
  \]
  It follows that
  \begin{equation}
    \label{eq:Psi-T-sharpness}
    n^{v_{T'}}p^{e_{T'}} = n^{v_T}p^{e_T} \cdot n^{v_{T'}-v_T} p^{e_{T'}-e_T} \ge n^{v_T} p^{e_T}  \cdot n^{v_{T'}-v_T} \left(n^{1/m_2(T)}\right)^{e_T-e_{T'}} \ge n^{v_T}p^{e_T},
  \end{equation}
  as required.
\end{proof}

\subsection{Small subgraphs in $G(n,p)$}
\label{sec:small-subgraphs-Gnp}

Our proofs will require several properties of the distribution of the number of copies of a given fixed graph $T$ in the random graph $G_{n,p}$. Following the classical approach of Ruci\'nski and Vince~\cite{RuVi85}, we first prove that if $T$ is $2$-balanced, then the number of copies of $T$ in $G_{n,p}$ is concentrated around its expectation, provided that this expectation tends to infinity with $n$. Moreover, we show that if $p \ll n^{-1/m_2(T)}$, then copies of $T$ in $G_{n,p}$ are essentially pairwise edge-disjoint.

\begin{lemma}
  \label{lem:edgeCopiesT}
  Suppose that $T$ is a fixed $2$-balanced graph, assume that $p \gg n^{-v_T/e_T}$, and let $G \sim G(n,p)$. Then w.h.p.\ $\N_T(G) = (1+o(1)) \cdot \E[\N_T(G)]$. Moreover, if $p \ll n^{-1/m_2(T)}$, then w.h.p.\ $G$ contains a subgraph $G^*$ with the following two properties:
  \begin{enumerate}[label=(\roman*),itemsep=2pt]
  \item
    $\N_T(G^*) = (1+o(1)) \cdot \E[\N_T(G)]$.
  \item
    Every edge of $G^*$ belongs to exactly one copy of $T$.
  \end{enumerate}
\end{lemma}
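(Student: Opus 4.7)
\medskip

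\noindent\textbf{Proof plan.} For the concentration statement, I would apply Chebyshev's inequality. The standard variance decomposition gives
\[
  \Var[\N_T(G)] \le O\Big( \sum_{T^* \subseteq T,\,e_{T^*} \ge 1} n^{2v_T - v_{T^*}} p^{2e_T - e_{T^*}} \Big),
\]
so after dividing by $\E[\N_T(G)]^2 = \Theta(n^{2v_T} p^{2e_T})$ the task reduces to showing that $n^{v_{T^*}} p^{e_{T^*}} \to \infty$ for every nonempty $T^* \subseteq T$. This is where $2$-balancedness enters, via Lemma~\ref{lemma:Psi-T}: the minimum of $n^{v_{T^*}} p^{e_{T^*}}$ over nonempty $T^* \subseteq T$ equals $n^{v_T} p^{e_T}$ when $p \le n^{-1/m_2(T)}$ and $n^2 p$ when $p \ge n^{-1/m_2(T)}$. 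The former tends to infinity by the hypothesis $p \gg n^{-v_T/e_T}$, while the latter does too, since $m_2(T) \ge 1$ for any graph with at least two edges forces $p \ge n^{-1}$ and hence $n^2 p \ge n$.

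For the second part, I would define $G^*$ to be the union of all \emph{isolated} copies of $T$ in $G$, meaning copies that share no edge with any other copy of $T$ in $G$, and then argue that the vast majority of copies of $T$ in $G$ are isolated. Let $X$ count the ordered pairs $(T_1, T_2)$ of distinct copies of $T$ in $G$ that share at least one edge; then
\[
  \E[X] \le \sum_{T^* \subsetneq T,\,e_{T^*} \ge 1} O\big(n^{2v_T - v_{T^*}} p^{2e_T - e_{T^*}}\big) = O(\E[\N_T(G)]) \cdot \sum_{T^*} O\big(n^{v_T - v_{T^*}} p^{e_T - e_{T^*}}\big),
\]
so I need each factor $n^{v_T - v_{T^*}} p^{e_T - e_{T^*}}$ to be $o(1)$. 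For $T^* \cong K_2$ this follows directly from $p \ll n^{-1/m_2(T)} = n^{-(v_T - 2)/(e_T - 1)}$. For $e_{T^*} \ge 2$, a short rearrangement shows that the $2$-balanced inequality $(e_{T^*}-1)/(v_{T^*}-2) \le (e_T-1)/(v_T-2)$ is equivalent to $(v_T - v_{T^*})/(e_T - e_{T^*}) \le 1/m_2(T)$, giving the same conclusion. The degenerate intersection types ($e_{T^*} = 1$ but $v_{T^*} > 2$, i.e., a shared edge plus isolated shared vertices) are dominated by the $K_2$ case.

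With $\E[X] = o(\E[\N_T(G)])$, Markov's inequality gives $X = o(\E[\N_T(G)])$ w.h.p., and since the number of non-isolated copies is at most $X$, I would conclude that $\N_T(G^*) \ge \N_T(G) - X = (1+o(1))\E[\N_T(G)]$ w.h.p., which is property~(i). Property~(ii) is then immediate: any edge $e$ of $G^*$ lies in some isolated copy $C$, and by definition of isolated $e$ lies in no other copy of $T$ in $G$, hence in no other copy in $G^* \subseteq G$ either. The place requiring the most care is cataloguing the intersection types $T^*$---one has to allow shared vertices that are not incident to any shared edge---but these degenerate cases are never extremal, and the $2$-balanced inequality handles the rest uniformly.
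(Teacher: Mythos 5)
Your proposal is correct and follows essentially the same route as the paper: the second-moment concentration argument, the reduction via Lemma~\ref{lemma:Psi-T} to the quantity $\min\{n^{v_{T'}}p^{e_{T'}}\}$, counting edge-sharing pairs of $T$-copies to show they are negligible when $p\ll n^{-1/m_2(T)}$, and taking $G^*$ to be the union of isolated copies (which coincides with the paper's construction of removing edges in more than one copy and then edges in no copy). The only cosmetic difference is that you spell out the rearrangement of the $2$-balanced inequality for each intersection type rather than citing Lemma~\ref{lemma:Psi-T} once more with the strict version of~\eqref{eq:Psi-T-sharpness}, as the paper does.
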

\begin{proof}
  Assume that $p \gg n^{-v_T/e_T}$ and let $G \sim G(n,p)$. Let $X = \N_T(G)$ and write $Y$ for the number of pairs of distinct copies of $T$ in $G$ that share at least one edge. A routine calculation (see, e.g., \cite[Chapter~3]{JLR-book}) shows that
  \[
    \Var(X) \le \E[X] + \E[Y] \quad \text{and} \quad \E[Y] \le C \cdot \E[X]^2 \cdot \left(\min\left\{n^{v(T')} p^{e(T')} : \emptyset \neq T' \subsetneq T \right\}\right)^{-1}
  \]
  for some constant $C$ that depends only on $T$. Since $\E[X] = \Theta\big(n^{v_T} p^{e_T}\big)$, our assumption on $p$ implies that $\E[X] \to \infty$ and, by Lemma~\ref{lemma:Psi-T}, that $\Var(X) \ll \E[X]^2$. This proves the first assertion of the lemma. To see the second assertion, suppose further than $p \ll n^{-1/m_2(T)}$. We claim that in this case,
  \[
    \min\left\{n^{v(T')} p^{e(T')} : \emptyset \neq T' \subsetneq T \right\} \gg n^{v_T} p^{e_T}.
  \]
  To see this, one can repeat the calculation in the proof of Lemma~\ref{lemma:Psi-T} observing that under the assumption that $p \ll n^{-1/m_2(T)}$, the first `$\ge$' in~\eqref{eq:Psi-T-sharpness} can be replaced with a `$\gg$' (because $e_{T'} < e_T$). This means, in particular, that $\E[Y] \ll \E[X]$ and thus w.h.p.\ $Y \ll X$. Finally, observe that if $X = (1+o(1)) \E[X]$ and $Y \ll X$, then one may obtain a graph $G^*$ with the claimed properties by first removing from $G$ all edges that belong to more than one copy of $T$ and subsequently removing all edges that are not contained in any copy of $T$.
\end{proof}

The following optimal tail estimate for the number of copies of a fixed graph $T$ from a given family $\T \subseteq T(K_n)$ that appear in $G(n,p)$ is a rather straightforward extension of the result of Janson, \L uczak, and Ruci\'nski~\cite{JLR}.

\begin{lemma}
  \label{lem:NumberOfCopiesOfT}
  For every graph $T$ and constant $\delta > 0$, there exists a constant $\beta > 0$ such that the following holds. For every $p$ and each collection $\T$ of copies of $T$ in $K_n$,
  \[
    \Pr\Big(\big|\T \cap T\big(G(n,p)\big)\big| \le \big(|\T| - \delta n^{v_T}\big) \cdot p^{e_T} \Big) \le \exp\left(- \beta \cdot \min\big\{n^{v_{T'}}p^{e_{T'}} : \emptyset \neq T' \subseteq T\big\}\right).
  \]
  In particular, if $T$ is $2$-balanced, then
  \[
    \Pr\Big(\big|\T \cap T\big(G(n,p)\big)\big| \le \big(|\T| - \delta n^{v_T}\big) \cdot p^{e_T} \Big) \le
    \begin{cases}
      \exp\left( -\beta n^{v_T} p^{e_T}\right) & \text{if $p \le n^{-1/m_2(T)}$}, \\
            \exp\left( -\beta n^2p\right) & \text{if $p \ge n^{-1/m_2(T)}$}.
    \end{cases}
  \]
\end{lemma}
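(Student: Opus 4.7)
The plan is to view $X := |\T \cap T(G(n,p))|$ as a sum of indicators of increasing events,
\[
  X = \sum_{T_1 \in \T} \mathbf{1}\!\bigl[E(T_1) \subseteq E(G(n,p))\bigr],
\]
and to apply the classical Janson lower-tail inequality in the form
\[
  \Pr\bigl(X \le \E[X] - s\bigr) \le \exp\!\left(-\frac{s^2}{2(\E[X] + \bar{\Delta})}\right),
\]
where $\E[X] = |\T|p^{e_T}$ and $\bar{\Delta}$ is the sum of $p^{|E(T_1)\cup E(T_2)|}$ over ordered pairs of distinct members $T_1,T_2\in\T$ that share at least one edge. Setting $s = \delta n^{v_T}p^{e_T}$, we may assume $|\T|\ge \delta n^{v_T}$, as otherwise the event is vacuous and the probability is $0$.

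The crux of the argument is to control $\bar{\Delta}$. Partitioning ordered pairs $(T_1,T_2)$ according to the isomorphism type $T'\subseteq T$ of the shared edge subgraph $T_1\cap T_2$ (necessarily with $e_{T'}\ge 1$), one notes that for each fixed $T_1$ the number of copies of $T$ in $K_n$ whose intersection with $T_1$ contains a copy of $T'$ is at most $O_T(n^{v_T-v_{T'}})$ (there are $O_T(1)$ choices for the copy of $T'$ inside $T_1$, and then $O(n^{v_T-v_{T'}})$ ways to extend it to a copy of $T$). Since $|\T|\le O_T(n^{v_T})$ and there are only finitely many isomorphism types of $T' \subseteq T$, summing gives
\[
  \bar{\Delta} \le O_T\!\left(\sum_{\emptyset\neq T'\subseteq T} n^{2v_T-v_{T'}}\, p^{2e_T-e_{T'}}\right) = O_T\!\left(\frac{n^{2v_T} p^{2e_T}}{\Phi_T}\right),
\]
where $\Phi_T := \min\{n^{v_{T'}}p^{e_{T'}} : \emptyset\ne T'\subseteq T\}$.

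Plugging the estimates back into Janson's bound, I would split into two easy cases. If $\bar{\Delta}\ge \E[X]$, then the exponent is at least $s^2/(4\bar{\Delta}) \ge \beta \Phi_T$ for a suitable $\beta=\beta(T,\delta)>0$. Otherwise $\bar{\Delta}<\E[X]$ and the exponent is at least $s^2/(4\E[X]) = \delta^2 n^{2v_T} p^{e_T}/(4|\T|) \ge (\delta^2/(4C_T))\, n^{v_T}p^{e_T} \ge (\delta^2/(4C_T))\,\Phi_T$, where in the last inequality we use that the minimum defining $\Phi_T$ ranges over a set that includes $T'=T$. Either way one obtains the claimed bound $\exp(-\beta\Phi_T)$. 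The ``in particular'' clause is then an immediate corollary of Lemma~\ref{lemma:Psi-T}, which identifies $\Phi_T$ with $n^{v_T}p^{e_T}$ in the regime $p\le n^{-1/m_2(T)}$ and with $n^2p$ in the regime $p\ge n^{-1/m_2(T)}$.

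The only mildly non-routine step is the estimate for $\bar\Delta$; I expect this combinatorial accounting to be the main (though still essentially textbook) obstacle, since one has to verify that summing over all shared-subgraph types introduces no dependence on $\T$ beyond the trivial $|\T|\le O_T(n^{v_T})$. Once $\bar\Delta$ is controlled, the rest is a direct application of Janson's inequality together with Lemma~\ref{lemma:Psi-T}.
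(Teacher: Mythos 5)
Your proof is correct and follows essentially the same route as the paper: apply Janson's lower-tail inequality to $X=|\T\cap T(G(n,p))|$, bound $\bar\Delta$ by summing over isomorphism types of the shared subgraph to obtain $\bar\Delta\le O_T\bigl(n^{2v_T}p^{2e_T}/\Phi_T\bigr)$, split according to whether $\bar\Delta$ dominates $\E[X]$, and finish with Lemma~\ref{lemma:Psi-T}. Your explicit handling of the degenerate case $|\T|<\delta n^{v_T}$ (where the target threshold is negative and the event has probability zero) is a small tidying-up that the paper leaves implicit.
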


Note that the second assertion of the lemma follows immediately from the main assertion and Lemma~\ref{lemma:Psi-T}. We shall derive Lemma~\ref{lem:NumberOfCopiesOfT} from the following well-known inequality (see, for example, \cite[Chapter~8]{AS}).

\begin{theorem}[Janson's inequality]
  \label{thm:Janson}
  Suppose that $\Omega$ is a finite set and let $B_1, \dotsc, B_k$ be arbitrary subsets of $\Omega$. Form a random subset $R \subseteq \Omega$ by independently keeping each $\omega \in \Omega$ with probability $p_\omega \in [0,1]$. For each $i \in [k]$, let $X_i$ be the indicator of the event that $B_i \subseteq R$. Let $X = \sum_i X_i$ and define
  \[
    \mu = \E[X] = \sum_{i=1}^k \prod_{\omega \in B_i} p_\omega \qquad \text{and} \qquad \Delta = \sum_{\substack{i \neq j \\ B_i \cap B_j \neq \emptyset}} \E[X_i X_j] = \sum_{\substack{i \neq j \\  B_i \cap B_j \neq \emptyset}} \prod_{\omega \in B_i \cup B_j} p_\omega.
  \]
  Then for any $0 \le t \le \mu$,
  \[
    \Pr\big( X\le \mu-t \big) \le \exp \left(-\frac{t^2}{2(\mu+\Delta)}\right).
  \]
\end{theorem}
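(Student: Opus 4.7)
Since Janson's inequality is a classical result that the authors invoke as a black box (and explicitly cite Alon--Spencer for), the plan is to sketch the standard Chernoff-style derivation rather than attempt anything new. The argument proceeds in three stages.

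The first stage is routine. For any $\lambda > 0$, Markov's inequality applied to the non-negative random variable $e^{-\lambda X}$ yields
\[
  \Pr(X \le \mu - t) \;=\; \Pr\!\big(e^{-\lambda X} \ge e^{-\lambda(\mu - t)}\big) \;\le\; e^{\lambda(\mu - t)}\cdot \E\!\left[e^{-\lambda X}\right],
\]
reducing everything to an upper bound on the moment generating function $\Psi(\lambda) := \E[e^{-\lambda X}]$.

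The substantive second stage is to establish the MGF estimate
\[
  \log \Psi(\lambda) \;\le\; -\lambda\mu + \tfrac{1}{2}\lambda^2(\mu + \Delta) \qquad \text{for all } \lambda \in [0,1].
\]
My plan is a differential inequality on $\varphi(\lambda) := \log\Psi(\lambda)$: one has $\varphi(0) = 0$ and $\varphi'(\lambda) = -\E_\lambda[X]$, where $\E_\lambda$ denotes expectation under the exponentially tilted measure $d\Pr_\lambda \propto e^{-\lambda X}\, d\Pr$. It therefore suffices to prove $\E_\lambda[X] \ge \mu - \lambda(\mu + \Delta)$. The idea is to rewrite $\E_\lambda[X_i] = \Pr_\lambda(B_i \subseteq R)$ by conditioning on the event $\{B_i \subseteq R\}$, to expand $e^{-\lambda}$ to first order in $\lambda$, and to control the resulting correction in terms of the covariances $\E[X_i X_j]$ for adjacent pairs (those with $B_i \cap B_j \ne \emptyset$); Harris's FKG inequality supplies the monotonicity needed to ensure that the sign of the correction goes the right way. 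Summing over $i$ contributes $\mu$ from the diagonal terms $\E[X_i^2] = \E[X_i]$ and $\Delta$ from the off-diagonal ones, producing exactly the claimed bound. Integrating from $0$ to $\lambda$ then yields the MGF estimate.

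Combining the two stages gives $\Pr(X \le \mu - t) \le \exp\!\big(-\lambda t + \lambda^2(\mu+\Delta)/2\big)$, and the optimal choice $\lambda = t/(\mu+\Delta)$, which lies in $[0,1]$ because $t \le \mu \le \mu+\Delta$, produces the advertised exponent $-t^2/(2(\mu+\Delta))$. The main obstacle is proving the lower bound on $\E_\lambda[X]$ in the face of dependencies among the $X_i$: the tilted measure $\Pr_\lambda$ depresses $\Pr(B_i \subseteq R)$ by an amount that must be matched tightly (not merely up to a constant) against a second-order Taylor expansion, and this is precisely where the combinatorial quantity $\Delta$ appears -- as the natural ``interaction'' correction absorbing all pairwise dependencies. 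Since this calculation is entirely classical, in an actual write-up I would simply defer to Alon--Spencer, as the excerpt already does.
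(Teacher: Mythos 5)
The paper does not prove Janson's inequality; it quotes it as a classical result with a pointer to Alon--Spencer, which is exactly what you note at the end. Your sketch correctly reproduces the standard exponential-moment argument found there (Markov's inequality on $e^{-\lambda X}$, a differential inequality for $\log\E[e^{-\lambda X}]$ whose key step uses Harris/FKG to bound $\E[e^{-\lambda X}\mid B_i\subseteq R]$ from below, then summing over $i$ to surface $\mu+\Delta$, and finally optimizing $\lambda=t/(\mu+\Delta)$), so both the sketch and the decision to defer to the cited reference match the paper's treatment.
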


\begin{proof}[Proof of Lemma~\ref{lem:NumberOfCopiesOfT}]
  Suppose that $\T = \{T_1, \dotsc ,T_k\}$ and for each $i \in [k]$, let $X_i$ be the indicator of the event that $T_i$ appears in $G(n, p)$, so that
  \[
    X = \sum_{i=1}^k X_i = \big|\T \cap T\big(G(n,p)\big)\big|.
  \]
  Let $\mu$ and $\Delta$ be as in the statement of Theorem~\ref{thm:Janson} and observe that
  \[
    \mu  = \E[X] = |\T| \cdot p^{e_T} \le n^{v_T} p^{e_T}
  \]
  and that
  \[
    \begin{split}
      \Delta & = \sum_{i=1}^k \sum_{\substack{j \neq i \\ T_i \cap T_j \neq \emptyset}} \Pr\big(T_i \cup T_j \subseteq G(n,p)\big) \le |\T| \cdot \sum_{\emptyset \neq T' \subsetneq T}n^{v_T-v_{T'}}p^{2e_T-e_{T'}}\\
      & \le 2^{e_T}n^{2v_T} p^{2e_T} \cdot \left(\min\big\{n^{v_{T'}}p^{e_{T'}} : \emptyset \neq T' \subsetneq T\big\}\right)^{-1}.
    \end{split}
  \]
  It thus follows from Theorem~\ref{thm:Janson} that
  \[
    \begin{split}
      \Pr\big(X \le \mu - \delta n^{v_T}p^{e_T}\big) & \le \exp\left(-\frac{\delta^2n^{2v_T}p^{2e_T}}{2(\mu+\Delta)}\right) \le \exp\left(-\delta^2n^{2v_T}p^{2e_T} \cdot \min\left\{\frac{1}{4\mu}, \frac{1}{4\Delta}\right\}\right) \\
      & \le \exp\left(-2^{-e_T-2} \delta^2 \cdot \min\big\{n^{v_{T'}}p^{e_{T'}} : \emptyset \neq T' \subseteq T\big\}\right),
    \end{split}
  \]
  as claimed.
\end{proof}

\subsection{Harris's inequality}
\label{sec:harris-inequality}

Our proofs of Theorems~\ref{thm:main-easy} and~\ref{thm:main} will use the well-known correlation inequality due to Harris~\cite[Lemma~4.1]{Har60}. Below, we state a version of this inequality that is a slight rephrasing of~\cite[Theorem~6.3.2]{AS}. A family $\GG$ of graphs is called \emph{decreasing} if for every $G \in \GG$, every subgraph of $G$ belongs to $\GG$. A family $\GG$ of subgraphs of $K_n$ is called \emph{increasing} if for every $G \in \GG$, every $H \subseteq K_n$ such that $H \supseteq G$ also belongs to $\GG$.

\begin{theorem}
  \label{thm:Harris}
  Let $\GG_1$ and $\GG_2$ be two families of subgraphs of $K_n$ and suppose that $G \sim G(n,p)$. If $\GG_1$ is decreasing and $\GG_2$ is increasing, then
  \[
    \Pr(G \in \GG_1 \text{ and } G \in \GG_2) \le \Pr(G \in \GG_1) \cdot \Pr(G \in \GG_2).
  \]
\end{theorem}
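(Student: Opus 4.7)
The plan is to view $G(n,p)$ as a product probability measure on $\{0,1\}^{N}$, where $N = \binom{n}{2}$ indexes the potential edges of $K_n$, and to prove the statement in the more general language of monotone real-valued functions on the discrete cube. Writing $f = \mathbf{1}_{\GG_1}$ and $g = \mathbf{1}_{\GG_2}$, the hypotheses translate to $f$ being coordinatewise nonincreasing and $g$ being coordinatewise nondecreasing, and the conclusion becomes $\mathbb{E}[fg] \le \mathbb{E}[f] \cdot \mathbb{E}[g]$. I would prove this slightly stronger statement: for the product measure on $\{0,1\}^N$ with marginals $p_1,\dotsc,p_N$, any decreasing $f$ and increasing $g$ satisfy $\mathbb{E}[fg] \le \mathbb{E}[f]\mathbb{E}[g]$, and specialize to $p_i = p$ at the end.

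The argument is by induction on $N$. For the base case $N=1$, a direct computation gives
\[
  \mathbb{E}[fg] - \mathbb{E}[f]\mathbb{E}[g] = p(1-p)\bigl(f(1)-f(0)\bigr)\bigl(g(1)-g(0)\bigr),
\]
which is $\le 0$ because $f(1) \le f(0)$ while $g(1) \ge g(0)$. This handles the one-coordinate situation, and in particular already verifies correlation for products that depend on only one edge.

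For the inductive step, I would condition on the value of a single coordinate, say the $N$-th. For $\omega \in \{0,1\}$, write $f_\omega, g_\omega \colon \{0,1\}^{N-1} \to \mathbb{R}$ for the restrictions of $f$ and $g$ to the slice where the $N$-th coordinate equals $\omega$. Since $f$ is decreasing and $g$ is increasing on $\{0,1\}^N$, each $f_\omega$ is decreasing and each $g_\omega$ is increasing on $\{0,1\}^{N-1}$. Applying the inductive hypothesis on $\{0,1\}^{N-1}$ gives $\mathbb{E}[f_\omega g_\omega] \le \mathbb{E}[f_\omega]\mathbb{E}[g_\omega]$ for $\omega \in \{0,1\}$. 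Taking expectation over the $N$-th coordinate, it then suffices to show that the two-point random variables $F(\omega) := \mathbb{E}[f_\omega]$ and $G(\omega) := \mathbb{E}[g_\omega]$ are themselves negatively correlated, which follows from the base case applied to $F$ and $G$ on $\{0,1\}$—provided $F$ is decreasing and $G$ is increasing in $\omega$. This monotonicity is the key auxiliary fact and is itself an instance of the inequality in the trivial form $\mathbb{E}[f_0] \ge \mathbb{E}[f_1]$: it follows from monotonicity of $f$ together with the natural coupling on $\{0,1\}^{N-1}$.

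The only genuinely delicate point is keeping the two nested uses of monotonicity straight—first to deduce that $F$ and $G$ inherit the appropriate monotonicity in the conditioning variable, and then to combine the slicewise inequality with the base-case inequality on $\{0,1\}$. Once this bookkeeping is in place, specializing to $f,g \in \{0,1\}$ and $p_i \equiv p$ recovers exactly the statement of Theorem~\ref{thm:Harris}.
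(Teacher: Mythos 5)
Your proof is correct and complete. The paper itself does not prove Theorem~\ref{thm:Harris}: it cites Harris's original paper and refers to \cite[Theorem~6.3.2]{AS} for the precise formulation, treating the inequality as a standard tool. The argument you give---inducting on the number of coordinates after reformulating the claim for general monotone real-valued functions on the product space $\{0,1\}^N$---is precisely the textbook proof found in Alon and Spencer, so there is no substantive divergence to report. Two small remarks. First, it is worth making explicit that you must generalize from indicator functions to arbitrary real-valued monotone functions before inducting, since the conditional means $F(\omega)=\mathbb{E}[f_\omega]$ and $G(\omega)=\mathbb{E}[g_\omega]$ are no longer $\{0,1\}$-valued; you do note this ("slightly stronger statement"), and the base-case identity
\[
\mathbb{E}[fg]-\mathbb{E}[f]\,\mathbb{E}[g]=p(1-p)\bigl(f(1)-f(0)\bigr)\bigl(g(1)-g(0)\bigr)
\]
indeed holds for arbitrary real $f,g$. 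Second, the auxiliary monotonicity of $F$ and $G$ in $\omega$ is not "itself an instance of the inequality" so much as an immediate pointwise consequence of the monotonicity of $f$ and $g$: for every $x\in\{0,1\}^{N-1}$ one has $f(x,0)\ge f(x,1)$, hence $F(0)\ge F(1)$ after integrating, with no coupling argument needed. With these clarifications your write-up is a clean, self-contained proof of the statement the paper takes for granted.
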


\section{Proof of Theorems~\ref{thm:main-easy} and~\ref{thm:main}}
\label{sec:proof-main-thms}

\subsection{Proof of Theorem~\ref{thm:main-easy}}
\label{sec:proof-theorem-main-easy}

Suppose that $H$ and $T$ are fixed graphs and assume that $T$ is $2$-balanced and that $m_2(H) > m_2(T)$.

\begin{proof}[{Proof of the first assertion}]
  Suppose that $n^{-v_T/e_T} \ll p \ll n^{-1/m_2(H)}$ and let $G \sim G(n,p)$. It follows from Lemma~\ref{lem:edgeCopiesT} that w.h.p.\ $\N_T(G) = (1+o(1))\E[\N_T(G)] = (1+o(1)) \N_T(K_n) p^{e_T}$. Therefore, it suffices to show that for every positive constant $\delta$, w.h.p.\ $G$ contains an $H$-free subgraph with at least $\big(\N_T(K_n) - \delta n^{v_T}\big) \cdot p^{e_T}$ copies of $T$. We shall argue somewhat differently depending on whether or not $p \ll n^{-1/m_2(T)}$.

  \paragraph{Case 1. $p \ll n^{-1/m_2(T)}$.}

  Suppose that $G$ satisfies both assertions of Lemma~\ref{lem:edgeCopiesT} and let $G^*$ be the subgraph of $G$ from the statement of the lemma. Since each edge of $G^*$ is contained in exactly one copy of $T$, then each copy of $H$ in $G^*$ must correspond to some $T$-covering of $H$ in $T(G^*)$.\footnote{Recall that $T$-coverings of $H$ are collections of pairwise edge-disjoint copies of $T$.} Consider an arbitrary $T$-covering $F$ of $H$. Since we have assumed that $m_2(H) > m_2(T)$, part~\ref{item:mH-gt-mT} of Lemma~\ref{lem:mTH-m2T} yields $m_T(F) > m_2(T)$. Since $p \ll n^{-1/m_2(T)} \ll n^{-1/m_T(F)}$, Remark~\ref{remark:m-T-F} implies that there is some $F' \subseteq F$ such that
  \[
    \E\big[\N_{U(F')}(G)\big] \ll \E\big[\N_T(G)\big].
  \]
  Since there are only $O(1)$ types of $T$-coverings of $H$, then w.h.p.\ one may remove from $G^*$ some $o\big(\E[\N_T(G)]\big)$ edges to obtain an $H$-free graph $G_0$. Since each edge of $G^*$ belongs to exactly one copy of $T$, then
  \[
    \N_T(G_0) = \N_T(G^*) - o\big(\E[\N_T(G)]\big) \ge (1+o(1)) \cdot \N_T(K_n) p^{e_T}.
  \]

  \paragraph{Case 2. $p = \Omega(n^{-1/m_2(T)})$.} 

  Suppose that $G$ satisfies the assertion of Lemma~\ref{lem:edgeCopiesT}. Since $p \ll n^{-1/m_2(H)}$, then there is some $H' \subseteq H$ such that $\E[\N_{H'}(G)] \ll n^2p$. In particular, w.h.p.\ one may delete $o(n^2 p)$ edges from $G$ to make it $H$-free. It suffices to show that w.h.p.\ for every set $X$ of $o(n^2p)$ edges of $G$, the graph $G \setminus X$ contains at least $\big(\N_T(K_n) - \delta n^{v_T}\big) \cdot p^{e_T}$ copies of $T$. For a fixed $X \subseteq E(K_n)$, let $\cA_X$ denote the event that
  \[
    \N_T(G \setminus X) \le \big(\N_T(K_n) - \delta n^{v_T}\big) \cdot p^{e_T}.
  \]
  Since $|X| \ll n^2$, then $\N_T(K_n \setminus X) = \N_T(K_n) - o(n^{v_T})$ and thus Lemma~\ref{lem:NumberOfCopiesOfT} with $\T \leftarrow T(K_n \setminus X)$ together with Lemma~\ref{lemma:Psi-T} yield
  \[
    \Pr(\cA_X) \le \exp\left(-\beta n^2 p\right)
  \]
  for some positive constant $\beta$. Since for every $X \subseteq E(K_n)$, the event $X \subseteq G$ is increasing and the event $\cA_X$ is decreasing, Harris's inequality (Theorem~\ref{thm:Harris}) implies that
  \[
    \Pr\big(X \subseteq G \text{ and } \cA_X\big) \le \Pr\big(X \subseteq G\big) \cdot \Pr(\cA_X).
  \]
  Consequently,
  \begin{multline*}
    \Pr\big(\cA_X \text{ for some } X \subseteq G \text{ with $|X| = o(n^2p)$}\big) \le \sum_{\substack{X \subseteq E(K_n) \\ |X| \ll n^2p}} p^{|X|} \cdot \exp\left(-\beta n^2p\right) \\
    \le \sum_{x \ll n^2p} \binom{\binom{n}{2}}{x} p^x \cdot \exp\left(-\beta n^2 p\right) \le \sum_{x \ll n^2p} \left(\frac{en^2p}{2x}\right)^x \cdot \exp\left(-\beta n^2 p\right) \le \exp\left(-\beta n^2 p/2\right),
  \end{multline*}
  as the function $x \mapsto (ea/x)^x$ is increasing when $x \le a$.
\end{proof}

\begin{proof}[{Proof of the second assertion}]
  Suppose that $p \gg n^{-1/m_2(H)}$ and let $G \sim G(n,p)$. Our aim is to show that for every positive constant $\delta$, w.h.p.\ every $H$-free subgraph $G_0$ of $G$ satisfies
  \[
    \N_T(G_0) \le \big(\ex(n, T, H) + 2\delta n^{v_T}\big) \cdot p^{e_T}.
  \]
  
  Let $\HH$ be the $e_H$-uniform hypergraph with vertex set $E(K_n)$ whose edges are all copies of $H$ in $K_n$. Observe that
  \[
    v(\HH) = \Theta\big(n^2\big) \qquad \text{and} \qquad e(\HH) = \Theta\big(n^{v_H}\big)
  \]
  and that there is a natural one-to-one correspondence between the independent sets of $\HH$ and $H$-free subgraphs of $K_n$. As we shall be applying Theorem~\ref{thm:Cont} to the hypergraph $\HH$, we let $q = n^{-1/m_2(H)}$ and verify that $\HH$ satisfies the main assumption of the theorem, provided that $K$ is a sufficiently large constant.

\begin{claim}
  There is a constant $K$ such that the hypergraph $\HH$ satisfies~\eqref{eq:Delta-ell-ass} in Theorem~\ref{thm:Cont} with $q=n^{-1/m_2(H)}$.
\end{claim}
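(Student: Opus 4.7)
The plan is a direct counting argument: bound $\Delta_\ell(\HH)$ for each $\ell \in \{1, \dotsc, e_H\}$ by the number of copies of $H$ in $K_n$ containing a fixed set of $\ell$ edges, and then compare against the target $Kq^{\ell-1} e(\HH)/v(\HH) = \Theta\bigl(K \cdot n^{v_H-2} \cdot n^{-(\ell-1)/m_2(H)}\bigr)$, using that $v(\HH) = \binom{n}{2}$ and $e(\HH) = \Theta(n^{v_H})$.

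Fix $\ell$ and a set $B \subseteq E(K_n)$ with $|B| = \ell$. If $\deg_\HH(B) > 0$, then the graph $H'$ spanned by $B$ is (isomorphic to) a subgraph of $H$ with exactly $\ell$ edges and some number $v_{H'}$ of vertices; every copy of $H$ in $K_n$ containing $B$ is obtained by extending an embedding of $H'$ into $K_n$ to an embedding of $H$, so the number of such copies is at most $c_H \cdot n^{v_H - v_{H'}}$ for some constant $c_H$ depending only on $H$. Hence it suffices to verify, for every subgraph $H' \subseteq H$ with $e_{H'} = \ell$, that
\[
v_{H'} - 2 \;\ge\; \frac{\ell - 1}{m_2(H)}.
\]
For $\ell = 1$ the required inequality reads $0 \ge 0$, which is trivial. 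For $\ell \ge 2$, the definition of $m_2(H)$ gives
\[
m_2(H) \;\ge\; \frac{e_{H'} - 1}{v_{H'} - 2} \;=\; \frac{\ell - 1}{v_{H'} - 2},
\]
which rearranges to precisely the inequality we need. (Note that $e_{H'} \ge 2$ forces $v_{H'} \ge 3$, so the denominator is positive.)

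Putting these bounds together, $\Delta_\ell(\HH) \le c_H \cdot n^{v_H - v_{H'}} \le c_H \cdot n^{v_H - 2 - (\ell-1)/m_2(H)}$, which is at most $K q^{\ell-1} e(\HH)/v(\HH)$ provided $K$ is chosen large enough in terms of $c_H$ and the implicit constants in the estimates $v(\HH) = \Theta(n^2)$, $e(\HH) = \Theta(n^{v_H})$. Since there are only finitely many isomorphism types of subgraphs $H' \subseteq H$ to consider, taking $K$ to be the maximum of the resulting finitely many constants yields the claim. There is no real obstacle here beyond keeping track of the case $\ell = 1$ (where $H'$ is a single edge and the definition of $m_2(H)$ is vacuous) and ensuring that the constant $K$ absorbs the automorphism/embedding factors uniformly in $\ell$.
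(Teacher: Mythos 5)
Your proof is correct and takes essentially the same approach as the paper: bound $\Delta_\ell(\HH)$ via extension counting over subgraphs $H'\subseteq H$ with $e_{H'}=\ell$, then observe that the choice $q=n^{-1/m_2(H)}$ gives $n^{v_{H'}-2}q^{e_{H'}-1}\ge 1$ for every nonempty $H'\subseteq H$, which is exactly your rearranged inequality $v_{H'}-2\ge(\ell-1)/m_2(H)$. The only cosmetic difference is that the paper sums over all subgraphs $H'$ with $\ell$ edges rather than focusing on the isomorphism type spanned by the given edge set $B$, but your constant $c_H$ absorbs the same combinatorial slack.
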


\begin{proof}
  Fix an arbitrary $\ell \in [e_H]$ and note that $\Delta_\ell(\HH)$ is the largest number of copies of $H$ in $K_n$ that contain some given set of $\ell$ edges. It follows that
  \[
    \Delta_\ell(\HH) \le \sum_{H' \subseteq H, e_{H'} = \ell} n^{v_{H} - v_{H'}}
  \]
  and hence
  \[
    \frac{v(\HH)}{e(\HH)} \cdot \max_{\ell \in [e_H]} \frac{\Delta_\ell(\HH)}{q^{\ell-1}} \le 2^{e_H} \cdot n^{2-v_H} \cdot \max_{\emptyset \neq H' \subseteq H} \frac{n^{v_H - v_{H'}}}{q^{e_{H'}-1}} = 2^{e_H} \cdot \left( \min_{\emptyset \neq H' \subseteq H} n^{v_{H'} - 2}q^{e_{H'} - 1} \right)^{-1}.
  \]
  Finally, since $q = n^{-1/m_2(H)}$, then $n^{v_{H'} - 2}q^{e_{H'} - 1} \ge 1$ for every nonempty $H' \subseteq H$.
\end{proof}

Denote by $\Free_n(H)$ the family of all $H$-free subgraphs of $K_n$ and let $\eps$ be the constant given by Lemma~\ref{lem:satLem-easy} invoked with $\delta/4$ in place of $\delta$. Apply Theorem~\ref{thm:Cont} to the hypergraph $\HH$ to obtain a constant $C$, a family $\cS \subseteq \binom{E(K_n)}{\le Cqn^2}$, and functions $g \colon \Free_n(H) \to \cS$ and $f \colon \cS \to \cP(E(K_n))$ such that:
\begin{enumerate}[label=(\roman*),itemsep=3pt]
\item
  For every $G_0 \in \Free_n(H)$, $g(G_0) \subseteq G_0$ and $G_0 \setminus g(G_0) \subseteq  f(g(G_0))$.
\item
  \label{item:fS-sparse-easy}
  For every $S\in \cS$, the graph $f(S)$ contains at most $\eps n^{v_H}$ copies of $H$.
\end{enumerate}

Given an $S \in \cS$, denote by $\cA_S$ the event
\[
  \big| T(G) \setminus T\big(f(S) \cup S\big) \big| \le \big( \N_T(K_n) - \ex(n, T, H) - \delta n^{v_T} \big) \cdot p^{e_T}.
\]

\begin{claim}
  \label{claim:NotToManyT-easy}
  There is a constant $\beta > 0$ such that for every $S \in \cS$,
  \[
    \Pr(\cA_S) \le \exp\left(-\beta n^2p\right).
  \]
\end{claim}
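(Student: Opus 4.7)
The plan is to rewrite $\cA_S$ as a lower-tail event for the number of copies of $T$ from a fixed family that appear in $G$, and then appeal to Lemma~\ref{lem:NumberOfCopiesOfT}. The structural input I will use is that the container $f(S) \cup S$ is nearly $H$-free, hence nearly $T$-optimal by supersaturation.

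First, I would recall that the constant $\eps$ in property~\ref{item:fS-sparse-easy} was produced by Lemma~\ref{lem:satLem-easy} applied with $\delta/4$ in place of $\delta$. The contrapositive of that lemma says: every $n$-vertex graph with at most $\eps n^{v_H}$ copies of $H$ contains at most $\ex(n,T,H) + (\delta/4) n^{v_T}$ copies of $T$. Applying this to $f(S) \cup S$ yields $\N_T(f(S) \cup S) \le \ex(n,T,H) + (\delta/4) n^{v_T}$. I would then set $\T_S := T(K_n) \setminus T(f(S) \cup S)$, so that $|\T_S| \ge \N_T(K_n) - \ex(n,T,H) - (\delta/4) n^{v_T}$, and observe that $T(G) \setminus T(f(S) \cup S) = \T_S \cap T(G)$. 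Hence $\cA_S$ is exactly the event
\[
  \big|\T_S \cap T(G)\big| \le \big(\N_T(K_n) - \ex(n,T,H) - \delta n^{v_T}\big) p^{e_T} \le \big(|\T_S| - (3\delta/4) n^{v_T}\big) p^{e_T}.
\]

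Next, I would apply Lemma~\ref{lem:NumberOfCopiesOfT} to the family $\T_S$ with $3\delta/4$ in place of $\delta$. Since $m_2(H) > m_2(T)$, the difference $1/m_2(T) - 1/m_2(H)$ is a positive constant, so the hypothesis $p \gg n^{-1/m_2(H)}$ in fact yields $p \ge n^{-1/m_2(T)}$ for all sufficiently large $n$. In this regime, the $2$-balanced case of Lemma~\ref{lem:NumberOfCopiesOfT} gives $\Pr(\cA_S) \le \exp(-\beta n^2 p)$ for some constant $\beta > 0$ depending only on $T$ and $\delta$, which is precisely the claimed bound (uniformly in $S \in \cS$).

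I do not anticipate a serious obstacle here: once the contrapositive of supersaturation is combined with the tail estimate of Lemma~\ref{lem:NumberOfCopiesOfT}, the calculation is essentially mechanical. The one point that deserves care is verifying the regime $p \ge n^{-1/m_2(T)}$, which is exactly where the hypothesis $m_2(H) > m_2(T)$ is used: it upgrades the generic deviation bound $\exp(-\beta n^{v_T} p^{e_T})$ into the stronger $\exp(-\beta n^2 p)$ that will afterwards support the union bound over $\cS$, whose size is $\exp\!\big(O(n^{2-1/m_2(H)} \log n)\big)$.
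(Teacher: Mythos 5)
Your overall strategy matches the paper's own proof: realize $\cA_S$ as a lower-tail event for the number of $T$-copies from a fixed family $\T_S$ appearing in $G$, use the container's sparsity plus supersaturation to bound $|\T_S|$ from below, and finish with Lemma~\ref{lem:NumberOfCopiesOfT} in the regime $p \ge n^{-1/m_2(T)}$. The endgame, including the reduction of $p \gg n^{-1/m_2(H)}$ to $p \ge n^{-1/m_2(T)}$ via the hypothesis $m_2(H) > m_2(T)$, is correct.

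There is, however, a gap in the step where you invoke the contrapositive of Lemma~\ref{lem:satLem-easy} on the graph $f(S) \cup S$. Property~\ref{item:fS-sparse-easy} only guarantees that $f(S)$, not $f(S) \cup S$, has at most $\eps n^{v_H}$ copies of $H$; adjoining $S$ may create additional copies, so the hypothesis of the contrapositive is not known to hold for $f(S) \cup S$. Consequently you are only entitled to $\N_T\big(f(S)\big) \le \ex(n,T,H) + (\delta/4) n^{v_T}$, and the copies of $T$ in $f(S) \cup S$ that use an edge of $S$ must be accounted for separately. This is a cheap fix: each edge of $K_n$ lies in at most $n^{v_T-2}$ copies of $T$, so
\[
  \N_T\big(f(S) \cup S\big) \le \N_T\big(f(S)\big) + |S| \cdot n^{v_T-2} \le \ex(n,T,H) + \tfrac{\delta}{4} n^{v_T} + Cq n^{v_T},
\]
and since $q = n^{-1/m_2(H)} = o(1)$ the last term is $o(n^{v_T})$, yielding $|\T_S| \ge \N_T(K_n) - \ex(n,T,H) - \delta n^{v_T}/2$, exactly as in the paper. (One could instead argue that $f(S) \cup S$ itself has at most $(\eps + o(1)) n^{v_H}$ copies of $H$, since it differs from $f(S)$ by $o(n^2)$ edges, but then the $\eps$ passed to the container theorem would need to be chosen strictly smaller than the one returned by Lemma~\ref{lem:satLem-easy}; the edge-by-edge accounting above is the cleaner route.) With this correction, the rest of your argument goes through.
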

\begin{proof}
  Fix an $S \in \cS$ and let $\T_S$ denote the collection of all copies of $T$ in $K_n$ that are not completely contained in $f(S) \cup S$. Since $|S| \ll n^2$, then property~\ref{item:fS-sparse-easy} above and Lemma~\ref{lem:satLem-easy} imply that
  \[
    \begin{split}
      |\T_S| & = \N_T(K_n) - \N_T\big(f(S) \cup S\big) \ge \N_T(K_n) - \N_T\big(f(S)\big) - |S| \cdot n^{v_T-2} \\
      & \ge \N_T(K_n) - \ex(n, T, H) - \delta n^{v_T}/2.
    \end{split}
  \]
  Since $T$ is $2$-balanced and $p \gg n^{-1/m_2(H)} \ge n^{-1/m_2(T)}$, Lemma~\ref{lem:NumberOfCopiesOfT} implies that
  \[
    \Pr(\cA_S) \le \exp\left(-\beta n^2 p\right)
  \]
  for some positive constant $\beta$, as claimed.
\end{proof}

Suppose that $G$ satisfies the assertion of Lemma~\ref{lem:edgeCopiesT} and let $G_0 \subseteq G$ be an $H$-free subgraph of $G$ that maximizes $\N_T(G_0)$. Since $G_0 \in \Free_n(H)$, then
\[
  g(G_0) \subseteq G_0 \subseteq f(g(G_0)) \cup g(G_0).
\]
and hence
\begin{align*}
  \N_T(G_0) & \le \max\left\{\left|T(G) \cap T\big(f(S) \cup S\big)\right| : S \in \cS \text{ and } S \subseteq G \right\} \\
  & = \N_T(G) - \min\left\{\left|T(G) \setminus T\big(f(S) \cup S\big)\right| : S \in \cS \text{ and } S \subseteq G \right\} \\
  & = (1+o(1)) \cdot \E[\N_T(G)] - \min\left\{\left|T(G) \setminus T\big(f(S) \cup S\big)\right| : S \in \cS \text{ and } S \subseteq G \right\}.
\end{align*}

We shall show that w.h.p.\ $\cA_S$ does not hold for any $S \in \cS$ such that $S \subseteq G$, which will imply that
\[
  \N_T(G_0) \le \big(\ex(n, T, H) + 2\delta n^{v_T}\big) \cdot p^{e_T}.
\]
Since for every $S \in \cS$, the event $S \subseteq G$ is increasing and the event $\cA_S$ is decreasing, Harris's inequality (Theorem~\ref{thm:Harris}) implies that
\[
  \Pr\big(S \subseteq G \text{ and } \cA_S\big) \le \Pr\big(S \subseteq G\big) \cdot \Pr(\cA_S).
\]
By Claim~\ref{claim:NotToManyT-easy}, in order to complete the proof in this case, it is sufficient to prove the following.

\begin{claim}
  \label{lem:boundOfSig-easy}
  \[
    \sum_{S \in \cS} \Pr\big(S \subseteq G\big) \le \exp\left(o(n^2p)\right).
  \]
\end{claim}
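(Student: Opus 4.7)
The plan is to estimate the sum by grouping the elements of $\cS$ by their size and using the bound $|\cS| \subseteq \binom{E(K_n)}{\le Cqn^2}$, together with the observation that in our regime $q \ll p$. Writing $s = |S|$ and $N = \binom{n}{2}$, the standard estimate $\binom{N}{s} \le (eN/s)^s$ gives
\[
  \sum_{S \in \cS} \Pr(S \subseteq G) = \sum_{S \in \cS} p^{|S|} \le \sum_{s=0}^{\lfloor Cqn^2 \rfloor} \binom{N}{s} p^s \le \sum_{s=0}^{\lfloor Cqn^2 \rfloor} \left(\frac{en^2 p}{2s}\right)^s.
\]

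Next I would observe that the function $\varphi(s) = (en^2 p/(2s))^s$ is increasing on the interval $0 < s \le n^2 p/2$. Since $q = n^{-1/m_2(H)}$ and the theorem is in the regime $p \gg n^{-1/m_2(H)}$, we have $q = o(p)$; in particular, $Cqn^2 \le n^2 p/2$ for all sufficiently large $n$. Therefore each of the at most $n^2$ summands is bounded by the value at $s = \lfloor Cqn^2 \rfloor$, namely
\[
  \varphi(Cqn^2) = \left(\frac{ep}{2Cq}\right)^{Cqn^2} = \exp\!\left( Cqn^2 \cdot \log\frac{ep}{2Cq}\right).
\]

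It remains to verify that $qn^2 \cdot \log(p/q) = o(n^2 p)$, equivalently $\log(p/q) = o(p/q)$. Setting $r = p/q$, this is simply the statement that $\log r / r \to 0$, which holds since $r = p/q \to \infty$ (as $p \gg q$). Consequently,
\[
  \sum_{S \in \cS} \Pr(S \subseteq G) \le n^2 \cdot \exp\!\left( O\!\left( qn^2 \log(p/q) \right)\right) = \exp\!\big( o(n^2 p)\big),
\]
as claimed. I expect no real obstacle here: the estimate is a routine union bound, and the only thing to check carefully is the comparison between $q$ and $p$, which is exactly what the assumption $p \gg n^{-1/m_2(H)}$ provides.
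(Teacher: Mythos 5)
Your proposal is correct and follows essentially the same route as the paper: bound the sum by $\sum_{s \le Cqn^2} \binom{n^2}{s} p^s$, apply the standard binomial estimate, use that $(ea/s)^s$ is increasing for $s \le a$ together with $q \ll p$ to evaluate at the endpoint $s = Cqn^2$, and conclude $\exp(o(n^2p))$. You simply spell out the last step ($q\log(p/q) = o(p)$ because $p/q \to \infty$) that the paper leaves implicit.
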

\begin{proof}
  Since each $S \in \cS$ is a graph with at most $Cqn^2$ edges and $q = n^{-1/m_2(H)} \ll p$, then
  \[
      \sum_{S \in \cS} \Pr\big(S \subseteq G\big) \le \sum_{s \le Cqn^2} \binom{n^2}{s} p^s \le \sum_{s = o\left(pn^2\right)} \left(\frac{e n^2 p}{s} \right)^s = \exp\left(o\big(n^2p\big)\right),
  \]
  as the function $s \mapsto (ea/s)^s$ is increasing when $s \le a$.
\end{proof}
This completes the proof of the second assertion of Theorem~\ref{thm:main-easy}.
\end{proof}

\subsection{Proof of Theorem~\ref{thm:main}}

\label{sec:proof-theorem-main}

Suppose that $H$ and $T$ are fixed graphs and assume that $T$ is $2$-balanced and that $m_2(H) \le m_2(T)$. Recall Definition~\ref{def:T-resolution}, let $F_1, \dotsc, F_k$ be the $T$-resolution of $H$, and let $p_0, p_1, \dotsc, p_k$ be the associated threshold sequence. For each $i \in \{0, \dotsc, k\}$, denote by $\F_i$ the set $\{F_1, \dotsc, F_i\}$. Finally, let $F^e = F_{T,H}^e$ be the minimal covering of $H$ with $e_H$ pairwise edge-disjoint copies of $T$.

\begin{proof}[{Proof of part~\ref{item:main-0-statement}}]
  Fix an $i \in [k]$, suppose that $p_0 \ll p \ll p_i$, and let $G \sim G(n,p)$. Our aim is to show that for every positive constant $\delta$, w.h.p.\ $G$ contains an $H$-free subgraph with at least $\left(\exx\big(n, T, \F_{i-1}\big) - \delta n^{v_T}\right) \cdot p^{e_T}$ copies of $T$. If $\exx\big(n, T, \F_{i-1}\big) = o\big(n^{v_T}\big)$, then the assertion is trivial (we may simply take the empty graph), so for the remainder of the proof we shall assume that $\exx\big(n, T, \F_{i-1}\big) \ge \gamma n^{v_T}$ for some positive constant $\gamma$.

  It follows from part~\ref{item:mH-leq-mT} of Lemma~\ref{lem:mTH-m2T} that $p \ll p_i \le n^{-1/m_T(F^e)} \le n^{-1/m_2(T)}$, so we may assume that $G$ satisfies both assertions of Lemma~\ref{lem:edgeCopiesT}. Let $G^*$ be the subgraph of $G$ from the statement of the lemma and let $\T_{i-1}$ be an extremal collection of copies of $T$ in $K_n$ with respect to being $\F_{i-1}$-free. In other words, let $\T_{i-1}$ be a collection of $\exx(n,T,\F_{i-1})$ copies of $T$ that does not contain any $T$-covering of either of the types $F_1, \dotsc, F_{i-1}$. Let $G'$ be the graph obtained from $G^*$ by keeping only edges covered by $T(G^*) \cap \T_{i-1}$ and let $G_0$ be the graph obtained from $G'$ by deleting all edges from every copy of $H$ in $G'$. This graph is clearly $H$-free. Since each edge of $G^*$ is contained in exactly one copy of $T$, then each copy of $H$ in $G^*$ must belong to some $T$-covering of $H$. Since $\T_{i-1}$ is $\F_{i-1}$-free, then the only $T$-coverings of $H$ that we may find in $G'$ are $F_i, \dotsc, F_k$ and coverings whose $T$-density is strictly greater than $m_T(F^e)$. Since $p \ll p_i \le n^{-1/m_T(F^e)}$ and there are only $O(1)$ types of $T$-coverings, then w.h.p.\ there are only $o\big(\E[\N_T(G)]\big)$ edges in $G' \setminus G_0$ and thus $\N_T(G') - \N_T(G_0) = o\big(\E[\N_T(G)]\big)$, as every edge of $G'$ belongs to at most one copy of $T$. Now, Lemma~\ref{lem:NumberOfCopiesOfT} implies that w.h.p.
  \[
    |T(G) \cap \T_{i-1}| \ge \left(\exx\big(n,T,\F_{i-1}\big) - \delta n^{v_T}/3\right) \cdot p^{e_T}.
  \]
  Therefore,
  \begin{align*}
    \N_T(G_0) & \ge \N_T(G') - \delta n^{v_T} p^{e_T} / 3 = |T(G^*) \cap \T_{i-1}| - \delta n^{v_T} p^{e_T} / 3 \\
    & \ge |T(G) \cap \T_{i-1}| - \left(\N_T(G) - \N_T(G^*)\right) - \delta n^{v_T} p^{e_T} / 3 \ge \left(\exx\big(n,T,\F_{i-1}\big) - \delta n^{v_T}\right) \cdot p^{e_T},
  \end{align*}
  since $\N_T(G) = \N_T(G^*) + o\big(n^{v_T} p^{e_T}\big)$.
\end{proof}

\begin{proof}[{Proof of part~\ref{item:main-1-statement}}]
  Fix an $i \in [k]$, suppose that $p \gg p_i$, and let $G \sim G(n,p)$. Our aim is to show that for every positive constant $\delta$, w.h.p.\ every $H$-free subgraph $G_0$ of $G$ satisfies
  \begin{equation}\label{eq:upperBound}
    \N_T(G_0) \le \left(\exx\big(n, T, \F_i\big) + 2\delta n^{v_T}\right) \cdot p^{e_T}.
  \end{equation}
  
  For each $j \in [i]$, let $\HH_j$ be the $|F_j|$-uniform hypergraph whose vertices are all copies of $T$ in $K_n$ and whose edges are all collections of $|F_j|$ copies of $T$ in $K_n$ that are isomorphic to the $T$-covering $F_j$. Observe that
\[
    v(\HH_j) = \Theta\big(n^{v_T}\big) \qquad \text{and} \qquad e(\HH_j) = \Theta\big(n^{v_{U(F_j)}}\big).
\]
Since $U(F_j)$ contains a copy of $H$, then for every $H$-free graph $G_0$, the family $T(G_0)$ is an independent set in $\HH_j$, for each $j \in [i]$. As we shall be applying Corollary~\ref{cor:Cont} to the hypergraphs $\HH_1, \dotsc, \HH_i$, we let $q = p_i^{e_T}$ and verify that all $\HH_j$ satisfy the main assumption of the corollary, provided that $K$ is a sufficiently large constant.

\begin{claim}
  There is a constant $K$ such that for each $j \in [i]$, the hypergraph $\HH_j$ satisfies~\eqref{eq:Delta-ell-ass-cor} in Corollary~\ref{cor:Cont} with $q=p_i^{e_T}$.
\end{claim}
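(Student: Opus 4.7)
The plan is to bound $\Delta_\ell(\HH_j)$ directly by a counting argument and then reduce the required inequality, via the edge-disjointness property built into the definition of a $T$-covering, to a statement about $m_T(F_j)$ that is essentially tautological.

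First, I would observe that $v(\HH_j)=\Theta(n^{v_T})$ and $e(\HH_j)=\Theta(n^{v_{U(F_j)}})$ (up to constants depending only on $T$, $H$, and the automorphism groups involved), so the target right-hand side is of order $Kq^{\ell-1} n^{v_{U(F_j)}-v_T}$. For the left-hand side, fix any set $B$ of $\ell$ copies of $T$ in $K_n$. If $B$ is contained in a copy of $F_j$ at all, then, up to the $O(1)$ choices of how $B$ embeds into $F_j$, extending $B$ to a full copy of $F_j$ amounts to placing the remaining $|F_j|-\ell$ copies of $T$ on the appropriate vertices; this yields at most $O(n^{v_{U(F_j)}-v_{U(F')}})$ extensions, where $F'\subseteq F_j$ is the sub-covering corresponding to the embedding of $B$. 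Maximizing over $\ell$-subsets $F'\subseteq F_j$ gives
\[
  \Delta_\ell(\HH_j)\le C\cdot \max_{F'\subseteq F_j,\ |F'|=\ell} n^{v_{U(F_j)}-v_{U(F')}}
\]
for some constant $C$ depending only on $T$ and $F_j$.

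Hence it suffices to show that for every $F'\subseteq F_j$ with $|F'|=\ell\ge 2$,
\[
  n^{v_T-v_{U(F')}} \le K\, q^{\ell-1},
\]
with $q=p_i^{e_T}=n^{-e_T/m_T(F_i)}$; the case $\ell=1$ is trivial since then $v_{U(F')}=v_T$ and $q^0=1$. The key observation is that since $F'$ is a collection of \emph{pairwise edge-disjoint} copies of $T$, we have the exact identity $e_{U(F')}=\ell\, e_T$. Plugging this into the defining inequality for $m_T(F_j)$,
\[
  m_T(F_j) \ge \frac{e_{U(F')}-e_T}{v_{U(F')}-v_T} = \frac{(\ell-1)e_T}{v_{U(F')}-v_T},
\]
and using $m_T(F_j)\le m_T(F_i)$ (since the $T$-resolution orders the coverings by non-decreasing $T$-density), gives
\[
  v_{U(F')}-v_T \ge \frac{(\ell-1)e_T}{m_T(F_j)} \ge \frac{(\ell-1)e_T}{m_T(F_i)},
\]
which is exactly what is needed: $n^{v_T-v_{U(F')}}\le n^{-(\ell-1)e_T/m_T(F_i)} = q^{\ell-1}$, so any $K\ge C$ works.

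The only potential obstacle would have been if $F'$ could have overlapping edges, which would prevent the clean identity $e_{U(F')}=\ell e_T$ and force us to reason about the more delicate quantity $e_{U(F')}$ in terms of $v_{U(F')}$ alone. However, the definition of a $T$-covering explicitly demands pairwise edge-disjointness, so this difficulty does not arise, and the choice $q=p_i^{e_T}$ is tailored precisely so that the inequality becomes tight exactly for the $F'\subseteq F_j$ achieving the maximum in the definition of $m_T(F_j)$.
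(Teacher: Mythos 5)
Your proof is correct and follows essentially the same route as the paper: bound $\Delta_\ell(\HH_j)$ by the number of ways to extend an $\ell$-element subset $F'\subseteq F_j$ to a copy of $F_j$, reduce to the inequality $n^{v_{U(F')}-v_T}\ge q^{\ell-1}$, and then deduce it from the edge-disjointness identity $e_{U(F')}=\ell e_T$ together with the defining inequality for $m_T(F_j)$ and the ordering $m_T(F_j)\le m_T(F_i)$ (equivalently $p_j\le p_i$). The paper writes the final bound in the form $n^{v_{U(F')}-v_T}p_i^{e_{U(F')}-e_T}\ge 1$ and phrases the ordering as $p_i\ge p_j$, but the substance is identical.
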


\begin{proof}
  Fix an arbitrary $\ell \in [|F_j|]$ and note that $\Delta_\ell(\HH_j)$ is the largest number of copies of $F_j$ in $T(K_n)$ that share the same set of $\ell$ copies of $T$. It follows that
  \[
    \Delta_\ell(\HH_j) \le \sum_{F' \subseteq F_j, |F'| = \ell} n^{v_{U(F_j)} - v_{U(F')}}
  \]
  and hence
  \begin{align*}
    \frac{v(\HH_j)}{e(\HH_j)} \cdot \max_{\ell \in [|F_j|]} \frac{\Delta_\ell(\HH_j)}{q^{\ell-1}} & \le 2^{|F_j|} \cdot \frac{n^{v_T}}{n^{v_{U(F_j)}}} \cdot \max_{\emptyset \neq F' \subseteq F_j} \frac{n^{v_{U(F_j)} - v_{U(F')}}}{p_i^{e_T \cdot (|F'|-1)}} \\
    & = 2^{|F_j|} \cdot \left( \min_{\emptyset \neq F' \subseteq F_j} n^{v_{U(F')} - v_T}p_i^{e_{U(F')} - e_T} \right)^{-1}.
  \end{align*}
  Finally, since $p_i \ge p_j = n^{-1/m_T(F_j)}$, then
  \[
    n^{v_{U(F')} - v_T}p_i^{e_{U(F')} - e_T} \ge 1
  \]
  for every nonempty $F' \subseteq F_j$.
\end{proof}

Denote by $\Free_n(\F_i)$ the family of all subfamilies of $T(K_n)$ that do not contain any $T$-covering isomorphic to one of the members of $\F_i$ and let $\eps$ be the constant given by Lemma~\ref{lem:satLem} invoked with $\delta/2$ in place of $\delta$. Apply Corollary~\ref{cor:Cont} to the hypergraphs $\HH_1, \dotsc, \HH_i$ to obtain a constant $C$, a family $\cS \subseteq \binom{T(K_n)}{\le Cqn^{v_T}}$, and functions $g \colon \Free_n(\F_i) \to \cS$ and $f \colon \cS \to \cP(T(K_n))$ such that:
\begin{enumerate}[label=(\roman*),itemsep=3pt]
\item
  For every $\T \in \Free_n(\F_i)$, $g(\T) \subseteq \T$ and $\T \setminus g(\T) \subseteq  f(g(\T))$.
\item
  \label{item:fS-sparse}
  For every $S\in \cS$, the collection $f(S)$ has at most $\eps n^{v_{U(F_j)}}$ copies of $F_j$ for every $j \in [i]$.
\item
  \label{item:g-consistency}
  If $g(\T)\subseteq \T'$ and $g(\T')\subseteq \T$ for some $\T, \T' \in \Free_n(\F_i)$, then $g(\T) = g(\T')$.
\end{enumerate}

Given an $S \in \cS$, denote by $\cA_S$ the event
\[
  \big| T(G) \setminus f(S) \big| \le \big( \N_T(K_n) - \exx(n, T, \F_i) - \delta n^{v_T} \big) \cdot p^{e_T}.
\]

\begin{claim}
    \label{claim:NotToManyT}
  There is a constant $\beta > 0$ such that for every $S \in \cS$,
  \[
    \Pr(\cA_S) \le \exp\left(-\beta \cdot \min\left\{n^2p, n^{v_T} p^{e_T}\right\}\right).
  \]
\end{claim}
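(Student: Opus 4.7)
The plan is to translate the sparsity property of $f(S)$ (condition~\ref{item:fS-sparse}) into an upper bound on $|f(S)|$ via the supersaturation lemma, and then apply the lower-tail estimate of Lemma~\ref{lem:NumberOfCopiesOfT} to the complementary collection $\T_S := T(K_n) \setminus f(S)$. The event $\cA_S$ literally says that $|T(G) \cap \T_S| = |T(G) \setminus f(S)|$ is much smaller than $|\T_S| \cdot p^{e_T}$, so it is a standard lower-tail event for the number of copies of $T$ from a fixed collection that land in $G(n,p)$.

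Concretely, first I would use property~\ref{item:fS-sparse} to observe that $f(S)$ contains at most $\eps n^{v_{U(F_j)}}$ copies of $F_j$ for every $j \in [i]$. Since $\eps$ was chosen via Lemma~\ref{lem:satLem} with $\delta/2$ in place of $\delta$, the contrapositive of that lemma (applied to $\F \leftarrow \F_i$) gives
\[
  |f(S)| \le \exx(n, T, \F_i) + \tfrac{\delta}{2} n^{v_T},
\]
and therefore
\[
  |\T_S| = \N_T(K_n) - |f(S)| \ge \N_T(K_n) - \exx(n, T, \F_i) - \tfrac{\delta}{2} n^{v_T}.
\]
Consequently, whenever $\cA_S$ holds,
\[
  \big|T(G) \cap \T_S\big| \le \big(\N_T(K_n) - \exx(n,T,\F_i) - \delta n^{v_T}\big) p^{e_T} \le \big(|\T_S| - \tfrac{\delta}{2} n^{v_T}\big) p^{e_T}.
\]

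Next I would apply Lemma~\ref{lem:NumberOfCopiesOfT} with $\T \leftarrow \T_S$ and $\delta \leftarrow \delta/2$ to conclude that
\[
  \Pr(\cA_S) \le \exp\!\left(-\beta \cdot \min\big\{n^{v_{T'}} p^{e_{T'}} : \emptyset \neq T' \subseteq T\big\}\right)
\]
for some $\beta = \beta(T, \delta) > 0$. Since $T$ is $2$-balanced, Lemma~\ref{lemma:Psi-T} identifies this minimum as exactly $\min\{n^2 p, n^{v_T} p^{e_T}\}$ (the first quantity dominates when $p \ge n^{-1/m_2(T)}$ and the second when $p \le n^{-1/m_2(T)}$), which gives the desired estimate after possibly relabeling the constant $\beta$.

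There is essentially no obstacle here, since $\eps$ was pre-selected in the container application precisely so that Lemma~\ref{lem:satLem} would produce the $\delta/2$ slack; the only bookkeeping point is to make sure that $\beta$ depends only on $T$, $H$, $\F_i$, and $\delta$ (and not on $p$ or $n$), which is automatic from the statements of Lemma~\ref{lem:NumberOfCopiesOfT} and Lemma~\ref{lemma:Psi-T}. The claim then feeds directly into a Harris-plus-union-bound argument over $S \in \cS$, exactly as in the proof of Theorem~\ref{thm:main-easy}, to establish~\eqref{eq:upperBound}.
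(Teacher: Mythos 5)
Your proposal is correct and follows essentially the same route as the paper: translate the container sparsity condition into a bound on $|f(S)|$ via Lemma~\ref{lem:satLem}, so that $|\T_S| \ge \N_T(K_n) - \exx(n,T,\F_i) - \delta n^{v_T}/2$, and then invoke the lower-tail estimate of Lemma~\ref{lem:NumberOfCopiesOfT} on $\T_S$ together with Lemma~\ref{lemma:Psi-T} to identify the rate. The paper's proof is just a terser version of exactly this argument.
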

\begin{proof}
  Fix an $S \in \cS$ and let $\T_S$ denote the collection of all copies of $T$ in $K_n$ that do not belong to $f(S)$. Property~\ref{item:fS-sparse} above and Lemma~\ref{lem:satLem} imply that
  \[
    |\T_S| = \N_T(K_n) - |f(S)| \ge \N_T(K_n) - \exx(n, T, \F_i) - \delta n^{v_T}/2.
  \]
  Since $T$ is $2$-balanced, Lemma~\ref{lem:NumberOfCopiesOfT} implies that
  \[
    \Pr(\cA_S) \le \exp\left(-\beta \cdot \min\left\{n^2 p, n^{v_T} p^{e_T}\right\}\right)
  \]
  for some positive constant $\beta$, as claimed.
\end{proof}

We shall now argue somewhat differently depending on whether or not $p \ll n^{-1/m_2(T)}$.

\paragraph{Case 1. $p \ll n^{-1/m_2(T)}$.}

Suppose that $G$ satisfies both assertions of Lemma~\ref{lem:edgeCopiesT} and let $G^*$ be the subgraph of $G$ from the statement of the lemma. Let $G_0 \subseteq G$ be an $H$-free subgraph of $G$ that maximizes $\N_T(G_0)$ and let $G' = G_0 \cap G^*$. Since
\[
  \N_T(G_0) \le \N_T(G') + \N_T(G) - \N_T(G^*)  = \N_T(G') + o\big(\E[\N_T(G)]\big) = \N_T(G') + o\big(n^{v_T} p^{e_T}\big),
\]
it suffices to show that~\eqref{eq:upperBound} holds with $G_0$ replaced by $G'$. Since $G'$ is $H$-free, then $T(G') \in \Free_n(\F_i)$ and hence
\[
  g\big(T(G')\big) \subseteq T(G') \subseteq f\big(g\big(T(G')\big)\big) \cup g\big(T(G')\big).
\]
But this means that
\begin{align*}
  \N_T(G') & \le \max\left\{|T(G) \cap f(S)| + |S| : S \in \cS \text{ and } S \subseteq T(G') \right\} \\
  & = \N_T(G) - \min\left\{|T(G) \setminus f(S)| - |S| : S \in \cS \text{ and } S \subseteq T(G') \right\} \\
  & \le (1+o(1)) \cdot \E[\N_T(G)] + Cp_i^{e_T}n^{v_T} - \min\left\{|T(G) \setminus f(S)| : S \in \cS \text{ and } S \subseteq T(G') \right\} \\
  & = (1+o(1)) \cdot \E[\N_T(G)] - \min\left\{|T(G) \setminus f(S)| : S \in \cS \text{ and } S \subseteq T(G') \right\}.
\end{align*}

Now, let $\cS'$ comprise all the sets of $T$-copies $S \in \cS$ that are pairwise edge-disjoint. Since $T(G')$ is a collection of pairwise edge-disjoint copies of $T$, then
\[
  \min\left\{|T(G) \setminus f(S)| : S \in \cS \text{ and } S \subseteq T(G') \right\} = \min\left\{|T(G) \setminus f(S)| : S \in \cS' \text{ and } S \subseteq T(G') \right\}.
\]
We shall now show that w.h.p.\ $\cA_S$ does not hold for any $S \in \cS'$ such that $S \subseteq T(G)$, which will imply that
\[
  \N_T(G') \le \left(\exx(n, T, \F') + 2\delta n^{v_T}\right) \cdot p^{e_T}.
\]
Since for every $S \in \cS$, the event $S \subseteq T(G)$ is increasing and the event $\cA_S$ is decreasing, Harris's inequality (Theorem~\ref{thm:Harris}) implies that
\[
  \Pr\big(S \subseteq T(G) \text{ and } \cA_S\big) \le \Pr\big(S \subseteq T(G)\big) \cdot \Pr(\cA_S).
\]
Since we have assumed that $p \ll n^{-1/m_2(T)}$, then Claim~\ref{claim:NotToManyT} and Lemma~\ref{lemma:Psi-T} imply that
\[
  \Pr(\cA_s) \le \exp\left(-\beta n^{v_T} p^{e_T}\right)
\]
and consequently, in order to complete the proof in this case, it is sufficient to prove the following.

\begin{claim}
  \label{lem:boundOfSig}
  \[
    \sum_{S \in \cS'} \Pr\big(S \subseteq T(G)\big) \le \exp\big(o(n^{v_T} p^{e_T})\big).
  \]
\end{claim}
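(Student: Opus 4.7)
The plan is to exploit the fact that the copies of $T$ in each $S \in \cS'$ are pairwise edge-disjoint. First, since the copies in $S$ share no edges, the events $\{T_j \subseteq G\}_{T_j \in S}$ are mutually independent, and therefore
\[
  \Pr\big(S \subseteq T(G)\big) \;=\; \prod_{T_j \in S} p^{e_T} \;=\; p^{e_T |S|}.
\]
Next, I would bound the number of such collections of a given size $s$ by the trivial estimate $\binom{\N_T(K_n)}{s} \le (e n^{v_T}/s)^s$ (edge-disjointness is not needed here for counting). Grouping by $|S| = s$ and using that $|S| \le Cqn^{v_T}$ for every $S \in \cS'$, this gives
\[
  \sum_{S \in \cS'} \Pr\big(S \subseteq T(G)\big) \;\le\; \sum_{s=0}^{Cqn^{v_T}} \binom{\N_T(K_n)}{s} p^{e_T s} \;\le\; \sum_{s=0}^{Cqn^{v_T}} \left(\frac{e n^{v_T} p^{e_T}}{s}\right)^s.
\]

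Now I would invoke the standard monotonicity: the function $s \mapsto (ea/s)^s$ is increasing for $s \in [0,a]$. Since $p \gg p_i$, we have $q = p_i^{e_T} \ll p^{e_T}$, hence $s^{\ast} := Cqn^{v_T} \ll n^{v_T}p^{e_T}$, so $s^*$ lies safely in the increasing regime. The sum is therefore at most
\[
  (s^{\ast}+1)\cdot \left(\frac{e\, p^{e_T}}{Cq}\right)^{s^{\ast}} \;=\; \exp\!\left(s^{\ast}\, \log\!\tfrac{e\, p^{e_T}}{Cq} + O(\log n)\right).
\]
Writing $r := p/p_i$, the leading term becomes $C\, r^{-e_T}(\log r)\cdot n^{v_T}p^{e_T}$ up to an $O(1)$ additive constant in the logarithm; since $r \to \infty$ and $(\log r)/r^{e_T}\to 0$ for any $e_T\ge 1$, we conclude that
\[
  s^{\ast}\,\log\!\tfrac{e\, p^{e_T}}{Cq} \;=\; o\!\left(n^{v_T} p^{e_T}\right),
\]
and the additive $O(\log n)$ is absorbed because $n^{v_T} p^{e_T}\gg \log n$ in the range considered (which we inherit from the fact that $\E[\N_T(G)] \to \infty$, implicit in the setup of Case~1).

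The only delicate point in the argument is the last estimate: we are given merely the qualitative hypothesis $p \gg p_i$, not any quantitative gain like $p/p_i \gg \log n$. The argument nonetheless goes through because $s^{\ast}/(n^{v_T}p^{e_T}) = C(p_i/p)^{e_T}$ already contains a factor $r^{-e_T}$ that beats the logarithmic loss coming from $\log(p^{e_T}/q)\le e_T \log r$. Beyond that, the whole proof is just an entropy-style union bound and carries no further obstacles.
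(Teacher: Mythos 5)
Your proof is correct and takes essentially the same route as the paper: edge-disjointness gives $\Pr(S\subseteq T(G))=p^{e_T|S|}$, then a union bound over sizes $s\le Cq n^{v_T}$ with the monotonicity of $s\mapsto(ea/s)^s$ yields the bound, since $q=p_i^{e_T}\ll p^{e_T}$. The only difference is cosmetic -- you spell out the final $(\log r)/r^{e_T}\to 0$ calculation that the paper compresses into a single line.
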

\begin{proof}
  Since each $S \in \cS'$ consists of pairwise edge-disjoint copies of $T$, then
  \[
    \Pr\big(S \subseteq T(G)\big) = \Pr\big(U(S) \subseteq G\big) = p^{e_{U(S)}} = p^{e_T \cdot |S|}.
  \]
  Since $\cS'$ contains only sets of at most $Cp_i^{e_T}n^{v_T}$ copies of $T$ in $K_n$ and $p_i \ll p$, it now follows that
  \[
    \begin{split}
      \sum_{S \in \cS'} \Pr\big(S \subseteq T(G)\big) & = \sum_{S \in \cS'} p^{e_T \cdot |S|} \le \sum_{s \le Cp_i^{e_T}n^{v_T}} \binom{n^{v_T}}{s} p^{e_T \cdot s} \\
      & \le \sum_{s = o\left(p^{e_T} n^{v_T}\right)} \left(\frac{e n^{v_T} p^{e_T}}{s} \right)^s = \exp\left(o\big(n^{v_T} p^{e_T}\big)\right),
    \end{split}
  \]
  since the function $s \mapsto (ea/s)^s$ is increasing when $s \le a$.
\end{proof}

\paragraph{Case 2. $p = \Omega(n^{-1/m_2(T)})$.} 

Suppose that $G$ satisfies the assertion of Lemma~\ref{lem:edgeCopiesT} and let $G_0 \subseteq G$ be an $H$-free subgraph of $G$ that maximizes $\N_T(G_0)$. Since $G_0$ is $H$-free, then $T(G_0) \in \Free_n(\F_i)$ and hence
\[
  g\big(T(G_0)\big) \subseteq T(G_0) \subseteq f\big(g\big(T(G_0)\big)\big) \cup g\big(T(G_0)\big).
\]
Now, let $\cS''$ comprise all the sets of $T$-copies $S \in \cS$ that are of the form $g\big(T(G'')\big)$ for some $H$-free graph $G'' \subseteq K_n$ and observe that
\begin{align*}
  \N_T(G_0) & \le \max\left\{|T(G) \cap f(S)| + |S| : S \in \cS'' \text{ and } S \subseteq T(G) \right\} \\
  & = \N_T(G) - \min\left\{|T(G) \setminus f(S)| - |S| : S \in \cS'' \text{ and } S \subseteq T(G) \right\} \\
  & = (1+o(1)) \cdot \E[\N_T(G)] - \min\left\{|T(G) \setminus f(S)| : S \in \cS'' \text{ and } S \subseteq T(G) \right\}.
\end{align*}

Analogously to Case~1, we shall show that w.h.p.\ $\cA_S$ does not hold for any $S \in \cS''$ such that $S \subseteq T(G)$, which will imply that
\[
  \N_T(G_0) \le \left(\exx(n, T, \F_i) + 2\delta n^{v_T}\right) \cdot p^{e_T},
\]
as claimed. As before, since for every $S \in \cS$, the event $S \subseteq T(G)$ is increasing and the event $\cA_S$ is decreasing, Harris's inequality (Theorem~\ref{thm:Harris}) implies that
\[
  \Pr\big(S \subseteq T(G) \text{ and } \cA_S\big) \le \Pr\big(S \subseteq T(G)\big) \cdot \Pr(\cA_S).
\]
Since we have assumed that $p = \Omega\big(n^{-1/m_2(T)}\big)$, then Claim~\ref{claim:NotToManyT} and Lemma~\ref{lemma:Psi-T} imply that
\[
  \Pr(\cA_s) \le \exp\left(-\beta n^2p\right)
\]
and consequently, in order to complete the proof in this case, it is sufficient to prove the following.

\begin{claim}
  \label{lem:boundOfSig-case2}
  \[
    \sum_{S \in \cS''} \Pr\big(S \subseteq T(G)\big) \le \exp\big(o(n^2p)\big).
  \]
\end{claim}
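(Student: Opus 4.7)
The plan is to exploit the defining property of $\cS''$---that its elements are precisely the images under $g$ of families $T(G'')$ for $H$-free graphs $G''$---together with the consistency property~\ref{item:g-consistency} from Corollary~\ref{cor:Cont}, in order to show that the map $S \mapsto U(S)$ is injective on $\cS''$. Granting this, the sum can be reorganized by the underlying graph as
\[
  \sum_{S \in \cS''} \Pr\big(S \subseteq T(G)\big) = \sum_{S \in \cS''} p^{e_{U(S)}} \le \sum_{\substack{\tilde G \subseteq K_n \\ e_{\tilde G} \le M}} p^{e_{\tilde G}},
\]
where $M = e_T \cdot Cqn^{v_T}$ bounds $e_{U(S)}$ for every $S \in \cS''$, since $|S| \le Cqn^{v_T}$ and each $T$-copy contributes at most $e_T$ edges to $U(S)$.

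For the injectivity, suppose $S_1, S_2 \in \cS''$ satisfy $U(S_1) = U(S_2) = \tilde G$, and write $S_j = g\big(T(G''_j)\big)$ for some $H$-free graph $G''_j \subseteq K_n$ and $j \in \{1,2\}$. Every edge of $\tilde G$ is covered by some $T$-copy from $S_j \subseteq T(G''_j)$, so $\tilde G \subseteq G''_j$ and hence $S_j \subseteq T(\tilde G) \subseteq T(G''_{3-j})$. Since each $G''_j$ is $H$-free, the unions of sub-collections of $T(G''_j)$ never contain $H$, and therefore both $T(G''_1)$ and $T(G''_2)$ lie in $\Free_n(\F_i)$; property~\ref{item:g-consistency} applied to this pair then yields $S_1 = g\big(T(G''_1)\big) = g\big(T(G''_2)\big) = S_2$.

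To bound the displayed sum I would use the standard estimate $\binom{\binom{n}{2}}{m}p^m \le (en^2p/(2m))^m$, which is increasing in $m$ on $[0, n^2p/2]$, and verify the two required inequalities $M \ll n^2p$ and $M \log(n^2p/M) = o(n^2p)$. A short computation using $p \gg p_i$ together with $m_T(F_i) \le m_T(F^e) \le m_2(T)$, as supplied by Lemma~\ref{lem:mTH-m2T}\ref{item:mH-leq-mT}, confirms both: the ratio $M/(n^2p)$ equals $O(p_i/p)$ times a nonpositive power of $n$ (with exponent $v_T - 2 - (e_T-1)/m_T(F_i) \le 0$), so it tends to zero, and the logarithmic correction is absorbed. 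The conceptual hurdle, and the one that makes Case~2 work at all, is the injectivity argument above; without it the naive estimate $|\cS''| \cdot p^{e_T}$ is far too weak to achieve the $\exp(o(n^2p))$ rate, because copies within a general $S$ may share many edges and $e_{U(S)}$ can be as small as $|S|$ rather than $e_T\,|S|$.
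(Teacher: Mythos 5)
Your proposal is correct and takes essentially the same route as the paper: both proofs hinge on showing that $U$ is injective on $\cS''$ via the consistency property of $g$, and then sum $p^{e_{U(S)}}$ over underlying graphs with at most $O(n^2 p_i) \ll n^2 p$ edges, the edge bound following from $p_i \le n^{-1/m_2(T)}$ (which the paper extracts via Lemma~\ref{lemma:Psi-T} and you extract directly from $m_T(F_i) \le m_2(T)$).
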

\begin{proof}
 We claim that the function $U$ that maps a collection of copies of $T$ to its underlying graph is injective when restricted to $\cS''$. Indeed, suppose that $U\left(g\big(T(G_1)\big)\right) = U\left(g\big(T(G_2)\big)\right)$ for some $H$-free graphs $G_1$ and $G_2$. It follows that
\[
  g\big(T(G_1)\big) \subseteq T\left(U\left(g\big(T(G_1)\big)\right)\right) = T\left(U\left(g\big(T(G_2)\big)\right)\right) \subseteq T\left(U\big(T(G_2)\big)\right) = T(G_2)
\]
and, vice-versa,  $g\big(T(G_2)\big) \subseteq T(G_1)$. The consistency property of the function $g$, see~\ref{item:g-consistency} above, implies that $g\big(T(G_1)\big) = g\big(T(G_2)\big)$.

Since $p_i \le n^{-1/m_2(T)}$ by part~\ref{item:mH-leq-mT} of Lemma~\ref{lem:mTH-m2T}, then Lemma~\ref{lemma:Psi-T} implies that $n^{v_T}p_i^{e_T} \le n^2p_i$. In particular, each $S \in \cS''$ comprises at most $Cn^2p_i$ copies of $T$ and therefore $U(S)$ has at most $Ce_Tn^2p_i$ edges. Since the function $U$ is injective when restricted to $\cS''$, we may conclude that
  \[
    \begin{split}
      \sum_{S \in \cS''} \Pr\big(S \subseteq T(G)\big) & = \sum_{S \in \cS''} \Pr\big(U(S) \subseteq G\big)= \sum_{U \in U(\cS'')} \Pr\big(U \subseteq G\big) = \sum_{U \in U(\cS'')} p^{e_U} \\
      & \le \sum_{u \le Ce_Tn^2p_i} \binom{\binom{n}{2}}{u} p^u \le \sum_{s = o\left(pn^2\right)} \left(\frac{e n^2 p}{2u} \right)^u = \exp\left(o\big(n^2p\big)\right),
    \end{split}
  \]
  since the function $u \mapsto (ea/u)^u$ is increasing when $u \le a$.  
\end{proof}

This completes the proof of part~\ref{item:main-1-statement} of Theorem~\ref{thm:main}.
\end{proof}

\section{Concluding remarks and open questions}
\label{sec:concl-remarks}

In this paper, we have studied the random variable $\ex\big(G(n,p), T, H\big)$ that counts the largest number of copies of $T$ in an $H$-free subgraph of the binomial random graph $G(n,p)$. We restricted our attention to the case when $T$ is $2$-balanced; the case when $T$ is not $2$-balanced poses further challenges and we were not able to resolve it using our methods. The threshold phenomena associated with the variable $\ex\big(G(n,p), T, H\big)$ are quite different depending on whether or not the inequality $m_2(H) > m_2(T)$ holds:

\begin{enumerate}[label={(\roman*)},itemsep=4pt]
\item
  If $m_2(H) > m_2(T)$, then our Theorem~\ref{thm:main-easy} offers a natural generalization of a sparse random analogue of the Erd\H{o}s--Stone theorem that was proved several years ago by Conlon and Gowers~\cite{CG} and by Schacht~\cite{Sc}.
\item
  If $m_2(H) \le m_2(T)$, then the `evolution' of the random variable $\ex\big(G(n,p), T, H\big)$ as $p$ grows from $0$ to $1$ exhibits a more complex behavior. Our Theorem~\ref{thm:main} shows that there are several potential `phase transitions' and that the typical values of the variable between these phase transitions are determined by solutions to deterministic hypergraph Tur\'an-type problems which we were unable to solve in full generality.
\end{enumerate}

\noindent
There are several natural directions for further investigations that are suggested by this work:

\begin{itemize}[itemsep=4pt]
\item
  It would be interesting to study the variable $\ex\big(G(n,p), T, H\big)$ for general graphs $T$ and $H$, that is, without assuming that $T$ is $2$-balanced.
\item
  We have very little understanding of the Tur\'an-type problems related to $T$-coverings of $H$ that are described in Section~\ref{sec:notat-defin}, even in the case when $T$ is a complete graph. A concrete problem that we found the most interesting is stated as Question~\ref{question:main}. In short, we ask if there exists a pair of graphs $T$ and $H$ such that the variable $\ex\big(G(n,p), T, H\big)$ undergoes multiple `phase transitions'.
\item
  Given a family $\HH$ of graphs, one may more generally ask to study the random variable $\ex\big(G(n,p), T, \HH\big)$ that counts the largest number of copies of $T$ in a subgraph of $G(n,p)$ that is free of every $H \in \HH$. This problem is solved when $T = K_2$ and $\HH$ is finite, see~\cite[Theorem~6.4]{MorSamSax}, but not much is known, even in the deterministic case ($p=1$), when $T \neq K_2$.
\end{itemize}

\smallskip
\noindent
\textbf{Acknowledgment:} We are indebted to the two anonymous referees for their careful reading of the manuscript and many helpful suggestions. The second author thanks Orit Raz for helpful discussions.

\bibliographystyle{amsplain}
\bibliography{T-copies-H-free}

\end{document}